\numberwithin{equation}{section}
{\theoremstyle{definition}
\newtheorem{defn}{Definition}[section]}
\newtheorem{theorem}{Theorem}[section]
\newtheorem{proposition}[theorem]{Proposition}
\newtheorem{corollary}[theorem]{Corollary}
\newtheorem{lemma}[theorem]{Lemma}
\theoremstyle{definition}
{
\newtheorem{remark}[theorem]{Remark}

}}
\newtheorem*{theorem*}{Theorem}
\newcommand{\cal}{\mathcal}
\newcommand{\OO}{{\cal O}}
\newcommand{\Nn}{{\mathbb{N}}}
\newcommand{\Rr}{{\mathbb{R}}}
\newcommand{\Ss}{{\mathbb{S}}}
\newcommand{\Hscr}{\mathscr{H}}
\newcommand{\Pscr}{\mathscr{P}}
\newcommand{\Vscr}{\mathscr{V}}
\newcommand{\Xscr}{\mathscr{X}}
\newcommand{\Gr}{{\rm Gr}}
\def\GL{\operatorname{GL}}
\newcommand{\abs}[1]{\left| #1\right|}
\newcommand{\norm}[1]{\left\| #1\right\|}
\newcommand{\id}  {\operatorname{id}}
\newcommand{\comment}[1]{}
\newcommand{\minexp}{\mathfrak{m}}
\newcommand{\med}{\mathfrak{v}}
\newcommand{\blob}{\rule[.2ex]{.8ex}{.8ex}}
\newcommand{\graph}{{\rm Graph}}
\newsavebox{\@brx}
\newcommand{\llangle}[1][]{\savebox{\@brx}{\(\m@th{#1\langle}\)}%
  \mathopen{\copy\@brx\mkern2mu\kern-0.9\wd\@brx\usebox{\@brx}}}
\newcommand{\rrangle}[1][]{\savebox{\@brx}{\(\m@th{#1\rangle}\)}%
  \mathclose{\copy\@brx\mkern2mu\kern-0.9\wd\@brx\usebox{\@brx}}}
\newcommand{\linspan}[1]{\llangle {#1} \rrangle}
\begin{document}
\title[Hyperbolic billiards on polytopes]{Hyperbolic billiards on polytopes with contracting reflection laws}

\date{\today}

\author[Duarte]{Pedro Duarte}
\address{Departamento de Matem\'atica and CMAF \\
Faculdade de Ci\^encias\\
Universidade de Lisboa\\
Campo Grande, Edifício C6, Piso 2\\
1749-016 Lisboa, Portugal 
}
\email{pmduarte@fc.ulisboa.pt}

\author[Gaiv\~ao]{Jos\'e Pedro Gaiv\~ao}
\address{Departamento de Matem\'atica and CEMAPRE, ISEG\\
Universidade de Lisboa\\
Rua do Quelhas 6, 1200-781 Lisboa, Portugal}
\email{jpgaivao@iseg.ulisboa.pt}

\author[Soufi]{Mohammad Soufi}
\address{Instituto de Matem\'atica \\
Universidade Federal de Alagoas\\
Campus A. C. Sim\~oes, Av. Lourival Melo Mota, s/n, 
57072-900 Macei\'o-AL, Brasil 
}
\email{msoufin@gmail.com}
\begin{abstract}
We study billiards on polytopes in $\Rr^d$ with contracting reflection laws, i.e. non-standard reflection laws that contract the reflection angle towards the normal. We prove that billiards on generic polytopes are uniformly hyperbolic provided there exists a positive integer $k$ such that for any $k$ consecutive collisions, the corresponding normals of the faces of the polytope where the collisions took place generate $\Rr^d$. As an application of our main result we prove that billiards on generic polytopes are uniformly hyperbolic if either the contracting reflection law  is sufficiently close to the specular or the polytope is obtuse. Finally, we study in detail the billiard on a family of $3$-dimensional simplexes. 
\end{abstract}

\maketitle


\section{Introduction}
\label{sec:introduction}

Given a $d$-dimensional polytope $P$, a billiard trajectory inside $P$ is a polygonal path described by a point particle moving with uniform motion in the interior of $P$. When the particle hits the interior of the faces of $P$, it bounces back according to a reflection law. Therefore, a billiard trajectory is determined by a sequence of reflections on the faces of $P$. Any reflection can be represented by a pair $x=(p,v)$ where $p$ is a point belonging to a face of $P$ and $v$ is a unit velocity vector pointing inside $P$. We denote by $M$ the set of reflections. The map $\Phi:M\to M,\, x\mapsto x'$ that takes a reflection $x$ to the next reflection $x'$ is called the \textit{billiard map}. The dynamics of billiards on polytopes has been mostly studied considering the specular reflection law. More recently, in the case of polygonal billiards, a new class of reflection laws has been introduced that contract the reflection angle towards the normal of the faces of the polygon \cite{arroyo09,markarian10,arroyo12,MDDGP12}. These are called \textit{contracting reflection laws}. A billiard map with a contracting reflection law is called a \textit{contracting billiard map}. It is known that strongly contracting billiard maps on generic convex polygons are uniformly hyperbolic and have finite number of ergodic SRB measures \cite{MDDGP13}. Recently, it has been proved that the same conclusion hods for contracting billiard maps on polygons with no parallel sides facing each other (even for contracting reflection laws close to the specular and for non-convex polygons) \cite{MDDG15}. 

In this paper we extend some of the previous results to contracting billiard maps on polytopes. It is known that the contracting billiard map of any polygon has dominated splitting~\cite[Proposition~3.1]{MDDGP13}. In this direction we show in Proposition~\ref{prop:partialhyperbolic} that the contracting billiard map of any polytope is always (uniformly) partially hyperbolic, i.e. there is a continuous and invariant splitting $E^s\oplus E^{cu}$ of the tangent bundle of $M$ into subbundles of the same dimension such that $D\Phi$ uniformly contracts vectors in the \textit{stable} subbundle $E^s$ and has neutral or expanding action on vectors belonging to the \textit{centre-unstable} subbundle $E^{cu}$. 

There are essentially two obstructions for the uniform expansion in  the centre-unstable subbundle $E^{cu}$. The first obstruction is caused by the billiard orbits that get trapped in a subset of faces of $P$ whose normals do not span the ambient space $\Rr^d$. When $P$ is a polygon ($d=2$), those orbits are exactly the periodic orbits of period two, i.e. orbits bouncing between parallel sides of $P$. In fact, when $P$ has no parallel sides the contracting billiard map is uniformly hyperbolic~\cite[Proposition~3.3]{MDDGP13}. 
 As another example let $P$ be a  $3$-dimensional prism
and consider a billiard orbit unfolding in some plane parallel to 
the prism's base. The normals to the faces along this orbit will span a plane and the billiard map behaviour transversal to this plane is neutral. This leads to an expansion failure in $E^{cu}$.

In order to circumvent this obstruction we had to consider a class of polytopes which have the property that for any subset of $d$ faces of $P$ the corresponding normals span $\Rr^d$. A polytope with this property is called \textit{spanning} (see Definition~\ref{def spanning}). In addition to being spanning, we suppose that the normals to the $(d-1)$-faces incident with any given vertex are linearly independent (see Definition~\ref{def generic polytopes}). Spanning polytopes with these properties are \textit{generic}.
In fact they form an open and dense subset having full Lebesgue measure in the set of all polytopes. 

The second obstruction to uniform expansion corresponds to the billiard orbits that spend a significant amount of time bouncing near the skeleton of $P$. To control the time spent near the skeleton we introduced the notion of \textit{escaping time}. Roughly speaking, the escaping time of $x\in M$ is the least positive integer $T=T(x)\in\Nn\cup\{\infty\}$ such that the number of iterates it takes for the billiard orbit of $x$ to leave a neighbourhood of the skeleton of $P$ is less than $T$ (see Definition~\ref{def escaping}).

With these notions we prove that the contracting billiard map has non-zero Lyapunov exponents for almost every point with respect to any given ergodic invariant measure.
 More precisely we prove:

\begin{theorem}\label{th measure hyperbolic}
If the contracting billiard map $\Phi$ of a generic polytope has an ergodic invariant probability measure $\mu$ such that $T$ is integrable with respect to $\mu$, then $\mu$ is hyperbolic.  
\end{theorem}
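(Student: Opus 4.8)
The plan is to show that under the integrability hypothesis on $T$, the expansion failures in the centre-unstable subbundle $E^{cu}$ are confined to the excursions near the skeleton, and that these excursions are controlled well enough by the escaping time that Oseledets' theorem forces the centre-unstable Lyapunov exponents to be strictly positive. Since the splitting $E^s\oplus E^{cu}$ is continuous and invariant by Proposition~\ref{prop:partialhyperbolic}, with $D\Phi$ uniformly contracting on $E^s$, the stable exponents are automatically negative; the whole issue is the sign of the exponents on $E^{cu}$. I would therefore reduce the theorem to a lower bound on $\lim_{n\to\infty}\frac1n\log\norm{D\Phi^n|_{E^{cu}}}$ along $\mu$-generic orbits.

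First I would set up the cocycle $D\Phi|_{E^{cu}}$ and examine the per-collision action. Away from the skeleton, the spanning genericity of $P$ guarantees that over any $k$ consecutive collisions the normals generate $\Rr^d$, so the induced action on $E^{cu}$ is uniformly expanding on those blocks; near the skeleton the expansion can degenerate and it is precisely the role of $T$ to quantify how long each such degenerate excursion lasts. The key step is a quantitative comparison: I would bound the logarithm of the expansion lost during one excursion by a controlled (affine) function of the escaping time $T$, so that along an orbit the total expansion is at least $n\cdot c$ minus a sum of terms each dominated by $T$ evaluated along the orbit. Concretely, writing $S_n = \sum_{j=0}^{n-1}(\text{per-step log-expansion on }E^{cu})(\Phi^j x)$, I would produce an integrable lower bound of the form $\log\norm{D\Phi|_{E^{cu}}} \ge c - \psi$ where $\psi$ is controlled by $T$ and hence $\mu$-integrable.

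Next I would invoke the Birkhoff ergodic theorem. Since $\mu$ is ergodic and $T\in L^1(\mu)$, the Birkhoff averages $\frac1n\sum_{j=0}^{n-1} T(\Phi^j x)$ converge $\mu$-a.e.\ to the finite constant $\int T\,d\mu$. Combined with the subadditive/additive lower bound for $S_n$, this yields
\begin{equation}
\liminf_{n\to\infty}\frac1n\,\log\norm{D\Phi^n|_{E^{cu}}(x)} \;\ge\; c \;-\; C\int T\,d\mu
\end{equation}
for $\mu$-a.e.\ $x$. The delicate point is that this crude bound is not yet strictly positive, so rather than a single excursion estimate I would pass to the induced return map to the complement of the skeleton neighbourhood: on each return the accumulated expansion beats the accumulated contraction by a fixed positive amount governed by the spanning condition, and the escaping time merely controls the excursion length between returns. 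Oseledets' theorem then identifies the centre-unstable exponents as the logarithms of the eigenvalues of this cocycle, all of which I expect to be strictly positive.

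The main obstacle I anticipate is exactly this quantitative excursion estimate: turning the spanning/genericity condition into a clean, uniform lower bound on the net expansion accrued per unit of escaping time, and ensuring this bound survives near the skeleton where the geometry degenerates and the centre-unstable directions can momentarily become neutral. Making the constant $c$ strictly dominate the contribution weighted by $\int T\,d\mu$ — and doing so uniformly over the orbit rather than in an averaged sense that could fail on a positive-measure set — is where the real work lies. Once that estimate is in hand, the combination of continuity of the splitting, uniform stable contraction, integrability of $T$, and Oseledets' theorem closes the argument and yields that $\mu$ is hyperbolic.
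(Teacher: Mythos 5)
Your overall architecture --- reduce to the sign of the exponents on $E^{cu}$, decompose the orbit into blocks governed by the escaping time, and feed $T\in L^1(\mu)$ into Birkhoff's theorem --- is the same as the paper's proof of Theorem~\ref{coro NUH}. But the way you quantify the blocks is wrong in a way that matters. You set the argument up as a competition between a gain $c$ per good step and a loss $\psi$ controlled by $T$ during skeleton excursions, arriving at $\liminf_n\frac1n\log\norm{D\Phi^n|_{E^{cu}}}\ge c-C\int T\,d\mu$, which, as you concede, need not be positive. In fact there is no loss to fight: by Proposition~\ref{prop:partialhyperbolic} (equivalently Lemma~\ref{vel:front}(3)) the cocycle $D\Phi|_{E^{cu}}$ satisfies $\minexp\ge 1$ at every single step, so near the skeleton the action is at worst neutral, never contracting. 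The correct estimate is therefore a ratio, not a difference: Theorem~\ref{main theorem} provides a factor $\sigma>1$, \emph{independent of $k$}, over any block of $2k$ iterates starting at a $k$-generating point. Chaining blocks of length $2\tilde T(x_j)$ along the backward orbit and applying Birkhoff to the integrable function $\tilde T$ gives an exponent bounded below by $\log\sigma$ divided by the mean block length, i.e.\ by a quantity of the form $\log\sigma/(2\int T\,d\mu)>0$, with no cancellation to worry about. (The paper then uses Kingman's subadditive theorem to turn the $\limsup$ into a limit for $\log\norm{D\Phi^{-n}|_{E^{cu}}}$, which controls the smallest exponent on $E^{cu}$.)

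The genuine gap is that the uniform block estimate --- a single $\sigma>1$ valid for every $k$-generating point and every $k$ --- is precisely what you defer as ``where the real work lies.'' That estimate is the content of Theorem~\ref{main theorem} (proved via Theorem~\ref{hyperb:main}, the collinearity analysis, and Lemma~\ref{main:lin:alg}); it is not a routine consequence of the spanning condition, and your induced-return-map sketch does not supply it, nor does it explain why the gain per return is uniform in the excursion length. If you take Theorem~\ref{main theorem} as given, your argument can be repaired along the lines above; as written, the proposal does not close.
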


When the contracting billiard map $\Phi$ has bounded escaping time, then $\Phi$ is uniformly hyperbolic. 

\begin{theorem}\label{th:main2}
If the contracting billiard map $\Phi$ of a generic polytope has an invariant set $\Lambda$ such that $T$ is bounded on $\Lambda$, then $\Phi|_{\Lambda}$ is uniformly hyperbolic.
\end{theorem}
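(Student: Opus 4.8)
The plan is to upgrade the conclusion of Theorem~\ref{th measure hyperbolic} from measure-theoretic hyperbolicity to uniform hyperbolicity by exploiting the hypothesis that the escaping time $T$ is \emph{bounded}, say $T\le N$ on $\Lambda$, rather than merely integrable. We already know from Proposition~\ref{prop:partialhyperbolic} that $\Phi$ carries an invariant dominated splitting $E^s\oplus E^{cu}$ with $E^s$ uniformly contracted. So it suffices to prove uniform expansion along $E^{cu}$: we must produce constants $C>0$ and $\lambda>1$, independent of $x\in\Lambda$, such that $\norm{D\Phi^n|_{E^{cu}_x}}\ge C\lambda^n$ for all $n$. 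Because the splitting is already dominated and the stable part is handled, the whole difficulty concentrates on the centre-unstable bundle, and the mechanism forcing expansion there is precisely the spanning property together with the fact that, within any window of $N$ consecutive collisions, the orbit escapes the neighbourhood of the skeleton and hence visits faces whose normals span $\Rr^d$.

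\begin{proof}[Proof sketch]
Let $N\in\Nn$ be a bound for $T$ on the invariant set $\Lambda$. The strategy is to combine the uniform estimates underlying Theorem~\ref{th measure hyperbolic} with a cocycle/Zorn-type argument that is uniform over $\Lambda$ rather than almost-everywhere with respect to a measure. First I would recall the pointwise expansion estimate established in the proof of Theorem~\ref{th measure hyperbolic}: for each $x$ the escaping time $T(x)$ controls how long $D\Phi$ can act neutrally on $E^{cu}$, and once the orbit has left the skeleton neighbourhood the spanning property guarantees a definite amount of expansion. Quantitatively, there exist a uniform integer $m$ (governed by the genericity constant $k$ of the spanning hypothesis) and a uniform factor $\sigma>1$ such that along any block of collisions in which the orbit is away from the skeleton, every $m$ consecutive iterates expand $E^{cu}$ by at least $\sigma$. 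These constants depend only on the polytope $P$ and the reflection law, not on the individual orbit, because the spanning condition and the transversality of incident normals are uniform features of a generic polytope.

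Next I would organise the orbit of $x\in\Lambda$ into consecutive escaping windows. Since $T\le N$ on $\Lambda$, the forward orbit $\{\Phi^j(x)\}_{j\ge 0}$ decomposes into successive time intervals, each of length at most $N$, at the end of which the orbit has escaped the skeleton neighbourhood; because $\Lambda$ is invariant, every point of the orbit again satisfies $T\le N$, so these escaping events recur with a uniformly bounded gap. On each such window the expansion estimate above yields a factor bounded below by $\sigma^{\lfloor \cdot/m\rfloor}$, while on the intervening neutral stretches the domination coming from Proposition~\ref{prop:partialhyperbolic} prevents any uncontrolled contraction of $E^{cu}$ (the worst case being the neutral action of the billiard map where normals fail momentarily to span). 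Telescoping over the windows, the number of windows in $n$ iterates is at least $\lfloor n/N\rfloor$, so $\norm{D\Phi^n|_{E^{cu}_x}}\ge C\,\sigma^{\lfloor n/N\rfloor}\ge C\lambda^n$ with $\lambda=\sigma^{1/N}>1$ and $C$ absorbing the boundedly many initial and terminal non-escaping iterates. Since $N$, $m$, $\sigma$, and hence $C,\lambda$ are independent of $x\in\Lambda$, this is exactly uniform expansion along $E^{cu}$ on $\Lambda$.

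The main obstacle I anticipate is making the expansion accounting genuinely uniform across the neutral stretches, rather than merely nonnegative in a measure-theoretic average. In the proof of Theorem~\ref{th measure hyperbolic}, the Birkhoff ergodic theorem and integrability of $T$ are used to guarantee that expansion outweighs neutrality \emph{on average}; here there is no invariant measure to average against, and one must instead track the $E^{cu}$-norm along every orbit deterministically. The delicate point is controlling the matrix-product cocycle on $E^{cu}$ when the relevant normals span $\Rr^d$ only after accumulating several collisions: a single escaping event need not expand \emph{every} direction of $E^{cu}$, so one must verify that spanning over $k$ consecutive collisions forces expansion of the full exterior/top power or, equivalently, of the least singular value, uniformly. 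I would address this by working with the induced action of $D\Phi$ on the full centre-unstable bundle and bounding its conorm from below using the linear-algebraic fact that a family of reflections across faces with spanning normals has a product whose smallest singular value is bounded away from the neutral value by a constant depending only on the spanning constant of the generic polytope. Once that uniform conorm estimate is in place, the telescoping argument closes and uniform hyperbolicity on $\Lambda$ follows.
\end{proof}
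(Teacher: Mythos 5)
Your proposal is correct and follows essentially the same route as the paper: boundedness of $T$ on the invariant set makes every point $\tau$-generating with $\tau=\sup_\Lambda T$, so the uniform estimate of Theorem~\ref{main theorem} applies at every point of $\Lambda$ with the same constants $\sigma,k$, and composing over consecutive blocks of length $2\tau$ (using that $D\Phi^{-1}|_{E^{cu}}$ never expands, by Proposition~\ref{prop:partialhyperbolic}) yields the uniform rate $\sigma^{1/(2\tau)}>1$ on $E^{cu}$. The only imprecision is that the quantity to bound from below is the conorm $\minexp\bigl(D\Phi^n|_{E^{cu}}\bigr)=\norm{D\Phi^{-n}|_{E^{cu}}}^{-1}$ rather than the operator norm $\norm{D\Phi^n|_{E^{cu}}}$ as first written, but you identify and address exactly this point in your closing paragraph, and it is precisely what Theorem~\ref{main theorem} already supplies, so no genuine gap remains.
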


Theorems~\ref{th measure hyperbolic} and \ref{th:main2} follow from Theorem~\ref{main theorem} which gives a uniform estimate on the expansion along the orbit of every point which is $k$-generating (see Definition~\ref{def k-generating}). Being $k$-generating simply means that the face normals along any orbit segment of length $k$ span $\Rr^d$. 

The strategy to prove Theorem~\ref{main theorem} is the following. Consider the billiard orbit $x_n=(p_n,v_n)$, $n\geq0$ of a $k$-generating point $x_0=(p_0,v_0)\in M$. Denote by $\eta_n$ the inward unit normal of the face of $P$ where the reflection $x_n$ takes place. In some appropriate coordinates, known as Jacobi coordinates, the unstable space $E^u(x_0)$ is represented by the orthogonal hyperplane $v_0^{\perp}$. If the velocity $v_1$ is collinear with the normal $\eta_1$, then the action of the derivative $D\Phi$ on $E^{u}(x_0)$ is neutral. Otherwise, the map $D\Phi$ expands the direction $v_0^\perp\cap V_1$ where $V_1$ denotes the plane spanned by the velocities $v_0$ and $v_1$. Similarly, $D\Phi^2$ expands the directions contained in $v_0^\perp\cap V_2$ where now $V_2$ is generated by the velocities $v_0$, $v_1$ and $v_2$.  However, it may happen that the plane spanned by the velocities $v_1$ and $v_2$ is the same obtained from the span by the normals $\eta_1$ and $\eta_2$, thus implying that $\dim(v_0^{\perp}\cap V_2)=1$. This coincidence of the \textit{velocity front} with the \textit{normal front} is called a collinearity (see Definition~\ref{def collinearity}). 

If a collinearity never occurs and $x_0$ is $k$-generating then 
the map $D\Phi^k$  expands $d-1$ distinct directions in $v_0^{\perp}$. Although collinearities prevent  full expansion of the iterates $D\Phi^n(x_0)$  they have the good trait of synchronizing the velocity front with the normal front. After a collinearity every time a new face is visited the angle between the new velocity and the previous velocity front is always bounded away from zero. This happens because this velocity angle is related to the angle between the new normal  and  the previous normal front, and also because we assume the polytope to be spanning.
Consider now the  velocity front $V$ at some collinearity moment.
The previous property implies expansion of $D\Phi^n(x_0)$ transversal to $v_0^\perp \cap V$ after the collinearity moment. 
Choosing a minimal collinearity  (see Definition~\ref{def minimal collinearity}) in the orbit of $x_0$ we can also ensure the expansion of $D\Phi^n(x_0)$ along the velocity front up the collinearity moment.
Putting  these facts together,
if at some instant $t<k$ a minimal collinearity  occurs on   the orbit of $x_0$ then for $n\geq t + k$  we have full expansion of $D\Phi^n(x_0)$  on $E^u$.

Because we seek uniform expansion, one has to deal with $\delta$-collinearities instead (see Definition~\ref{def delta collinearity}). Moreover, since the set of orbits in $M$ is not compact (one has to remove from $M$ the orbits which hit the skeleton of $P$), $\delta$-collinearities are more easily handled in a bigger set called the \textit{trajectory space}. The trajectory space is compact and defined in a symbolic space which only retains the velocities and the normals of the faces of $P$ where the reflections take place (see Definition~\ref{def trajectory}). Finally, using compactness and continuity arguments we derive Theorem~\ref{hyperb:main} which gives a uniform estimate on the expansion along an orbit segment of length $2k$ of any $k$-generating point. Then Theorem~\ref{main theorem} follows immediately from Theorem~\ref{hyperb:main}. The crucial tool to prove Theorem~\ref{hyperb:main} is Lemma~\ref{main:lin:alg} which gives a uniform expansion estimate on compositions of linear maps. Since this lemma is formulated in more conceptual terms, we believe that the ideas therein might be of independent interest.

In section~\ref{sec:escape} we show that contracting billiards on polytopes have finite escaping time if either the contracting law is close to the specular or the polytope is obtuse. This together with Theorem~\ref{th:main2} prove the following corollaries.

\begin{corollary}
The contractive billiard map of a generic polytope with a contracting reflection law sufficiently close to the specular one is uniformly hyperbolic.
\end{corollary}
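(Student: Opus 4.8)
The plan is to derive this corollary directly from Theorem~\ref{th:main2} by showing that the hypothesis of bounded escaping time is automatically satisfied whenever the contracting reflection law is sufficiently close to the specular one. Concretely, I would first note that Theorem~\ref{th:main2} reduces the problem to producing an invariant set $\Lambda$ on which the escaping time function $T$ is bounded; for uniform hyperbolicity of the full map one wants $\Lambda = M$ (or the full non-escaping set), so the real content is the claim, proved in Section~\ref{sec:escape}, that \emph{contracting billiards on polytopes have finite escaping time when the contracting law is close to the specular}. Thus the corollary is essentially an assembly of two ingredients: the escaping-time bound from Section~\ref{sec:escape} and the uniform hyperbolicity criterion of Theorem~\ref{th:main2}.

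First I would fix a generic polytope $P$ and recall that genericity (spanning plus linear independence of normals at each vertex) is exactly the hypothesis required by Theorem~\ref{th:main2}. Next I would invoke the escaping-time estimate: there is a $\delta_0>0$ such that for every contracting reflection law within distance $\delta_0$ of the specular law, the escaping time $T$ is \emph{uniformly bounded} on all of $M$ (equivalently, on the maximal invariant set where the billiard map is defined). The heuristic reason is that the specular billiard on a polytope has the property that orbits cannot linger arbitrarily long near the skeleton without the contraction of angles being negligible; perturbing the reflection law only slightly keeps this escape mechanism intact, so the number of iterates needed to leave a fixed neighbourhood of the skeleton stays bounded uniformly in the initial condition. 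Taking $\Lambda$ to be the full invariant set on which $\Phi$ is defined, boundedness of $T$ on $\Lambda$ is then immediate.

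With a uniform bound $T\le N$ on the invariant set $\Lambda$ in hand, Theorem~\ref{th:main2} applies verbatim and yields that $\Phi|_{\Lambda}$ is uniformly hyperbolic. Since $\Lambda$ is the entire set on which the contracting billiard map is defined, this gives uniform hyperbolicity of the contracting billiard map of $P$, which is precisely the assertion of the corollary.

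The main obstacle is not in this final assembly, which is short, but in the underlying escaping-time estimate that must be supplied in Section~\ref{sec:escape}: one has to show that proximity to the specular law forces a \emph{uniform} (initial-condition-independent) bound on the time spent near the skeleton. The delicate point is uniformity in the initial reflection, because orbits grazing the skeleton can in principle return to it many times; controlling this requires quantifying how the contraction of reflection angles, however weak when the law is close to specular, still prevents orbits from being trapped near lower-dimensional faces for unboundedly many collisions. Once that quantitative escape statement is established with a bound uniform over all contracting laws in a $\delta_0$-neighbourhood of the specular law, the corollary follows immediately from Theorem~\ref{th:main2} as described above.
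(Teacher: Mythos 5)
Your proposal is correct and follows essentially the same route as the paper: the paper's proof is exactly the two-step assembly you describe, namely that a polytope in general position is spanning, that Corollary~\ref{cor escaping 1} (proved via the zigzag-length estimate of Theorem~\ref{th:zigzag}, using the barycentric angle condition $2\phi>\pi/2-f(\pi/2)$ rather than the informal ``negligible contraction near the skeleton'' heuristic you sketch) gives a uniform bound on the escaping time $T$ on $D$, and that Theorem~\ref{coro UH} then yields uniform hyperbolicity. Since you explicitly defer the quantitative escape estimate to Section~\ref{sec:escape}, where the paper indeed supplies it, there is no gap in the assembly.
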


\begin{corollary}
The contracting billiard map of a generic obtuse polytope is uniformly hyperbolic.
\end{corollary}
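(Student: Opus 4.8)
The plan is to derive the corollary from Theorem~\ref{th:main2} by controlling the escaping time, which reduces the statement to a single geometric estimate. A generic obtuse polytope is, by Definitions~\ref{def spanning} and~\ref{def generic polytopes}, in particular a generic polytope, so Theorem~\ref{th:main2} applies to it; taking the invariant set $\Lambda$ to be the full invariant subset of $M$ on which all iterates of $\Phi$ are defined, it suffices to prove that the escaping time $T$ of Definition~\ref{def escaping} is uniformly bounded, say $T\le K$, for some constant $K=K(P)$. Once this is established, Theorem~\ref{th:main2} immediately yields that $\Phi|_{\Lambda}$ is uniformly hyperbolic, which is the assertion to be proved.

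The substance of the argument is therefore the uniform escape bound carried out in Section~\ref{sec:escape}. I would prove it by examining the orbit while it remains in an $\varepsilon$-neighbourhood of the skeleton of $P$. The key feature of an obtuse polytope is that all its dihedral angles are at least $\pi/2$; this is the higher-dimensional analogue of the planar fact that a specular trajectory entering a wedge of opening angle $\ge\pi/2$ undergoes at most two reflections before leaving. Concretely, I would fix a stratum of the skeleton and monitor the distance of the reflection points to that stratum, showing that obtuseness of the incident dihedral angles forces this distance to grow after a bounded number of collisions. The contracting reflection law, which only pulls the reflected velocity closer to the inward normal, does not destroy this monotonicity, so the bound on the number of collisions spent near a single stratum is independent of both the orbit and the particular contracting law. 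Since the skeleton has finitely many strata, taking the maximum over them produces the required uniform constant $K$.

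The main obstacle I expect is precisely this escape estimate in dimension $d\ge3$, where the skeleton is stratified into faces of every codimension and an orbit may drift along one stratum while approaching another. Unlike the planar case, where a single wedge is involved, here one must make the monotonicity uniform across all strata at once and ensure that leaving the neighbourhood of one stratum is not immediately cancelled by entering that of another; organising this bookkeeping, and quantifying the growth rate of the distance in terms of the minimal dihedral angle, is where the analytic work of Section~\ref{sec:escape} concentrates. With that estimate in hand the corollary itself is a one-line consequence of Theorem~\ref{th:main2}.
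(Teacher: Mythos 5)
Your reduction is the same as the paper's: a generic obtuse polytope is in particular spanning, so by Theorem~\ref{th:main2} (equivalently Theorem~\ref{coro UH}) it suffices to bound the escaping time $T$ uniformly. But the substance of the corollary is exactly that bound, and your proposal does not establish it --- you describe a strategy and then explicitly flag its key step as ``the main obstacle I expect,'' which leaves the proof incomplete. Two concrete problems. First, you are working with the wrong notion of obtuseness: the paper defines an obtuse polytope via the \emph{barycentric angle} $\phi$ of each vertex cone (Definition~\ref{def barycentric angle}, $\sin\phi=\ell$ where $\ell$ is the distance from the origin to the affine hyperplane through the unit normals $\eta_1,\dots,\eta_d$), requiring $\phi>\pi/4$; this is not the condition that all dihedral angles are at least $\pi/2$, and the two do not coincide in dimension $d\ge 3$. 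Second, your proposed mechanism --- monotone growth of the distance from the reflection points to a stratum of the skeleton --- is a configuration-space argument whose monotonicity you do not prove, and your assertion that a contracting law ``does not destroy this monotonicity'' because it pulls the velocity toward the normal is precisely the point that fails in general: pulling toward the normal is what allows strongly contracting billiards to linger near acute wedges with unbounded escape time (this is why Theorem~\ref{th:zigzag} needs the hypothesis $2\phi>\pi/2-f(\pi/2)$ at all, and why Section~\ref{sec:examples} must handle near-slap billiards separately).

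The paper's actual argument is a velocity-space monotonicity, not a configuration-space one. For a single polyhedral cone with barycentric direction $e$, Lemma~\ref{lem:vk} gives $\langle v_{k+1}-v_k,e\rangle=\|v_{k+1}-v_k\|\gamma_k$ with $\gamma_k\ge\sin\bigl(\phi+\tfrac{f(\pi/2)-\pi/2}{2}\bigr)$, which is positive whenever $2\phi>\pi/2-f(\pi/2)$; since $\phi>\pi/4$ makes this hold for \emph{every} contracting law, and since $\langle v_n-v_0,e\rangle\le 2$ while each step satisfies $\|v_{k+1}-v_k\|\ge 2\cos\bigl(\tfrac{f(\pi/2)+\pi/2}{2}\bigr)>0$ (Lemma~\ref{lem:vkdiff}), the number of reflections in the cone is bounded (Theorem~\ref{th:zigzag} via the bounded-zigzag criterion of Lemma~\ref{prop:escapingzigzag}). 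This sidesteps entirely the stratum-by-stratum bookkeeping you anticipate. To repair your proof you should either import Corollary~\ref{cor escaping 2} explicitly, or carry out the zigzag computation; as written, the corollary is reduced to an unproven (and, in the form you state it, incorrectly premised) escape estimate.
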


The rest of the paper is organized as follows. In section~\ref{sec:statements} we introduce some notation and define the contracting billiard on a polytope. We also derive several properties of contracting billiards maps and rigorously state our main result. In section~\ref{sec:regular polygons} we show that polytopes on general position are generic and in section~\ref{sec:escape} we study the escaping time on polyhedral cones. Technical results concerning the expansion of composition of linear maps are proved in section~\ref{sec:unifexp}. In section~\ref{sec:proof} we prove our main results.
Finally, in section~\ref{sec:examples} we
study in detail the contracting billiard of a family of 3-dimensional simplexes.

\section{Definitions and Statements}
\label{sec:statements}
A half-space in $\Rr^d$ ($d\geq 2$) is any set of the form
$\{\, x\in\Rr^d\,\colon\, \langle x,v\rangle\leq c \,\}$,
for some non-zero vector $v\in\Rr^d$ and some real number $c\in\Rr$. A polyhedron is any finite intersection of half-spaces in $\Rr^d$. A polytope is a compact polyhedron.
We call dimension of a polyhedron to the dimension of the affine subspace that it spans.
Let $P\subset\Rr^d$ be a $d$-dimensional polytope. 

The billiard on $P$ is a dynamical system describing the linear motion of a point particle inside $P$. When the particle hits the boundary of $P$, it gets reflected according to a reflection law, usually the specular reflection law. In the following we rigorously define the billiard map $\Phi_{P}$ with the specular reflection law. But first, let us introduce some notation.

\subsection{Basic Euclidean Geometry}
\label{Euclidean Geometry}
Let $V$ and $V'$ be Euclidean spaces with 
$\dim V=\dim V'=d$.
Given a linear map $L\colon V\to V'$,
the maximum expansion of $L$ is the operator norm  
$$ \norm{L}:= \max \{ \norm{L(v)} \colon  v\in V, \norm{v}=1 \}  $$
 while the minimum expansion of $L$, defined by
$$ \minexp(L):= \min \{ \norm{L(v)} \colon  v\in V, \norm{v}=1 \}$$
is either $0$, when $L$ is non invertible, or else
$\minexp(L)=\norm{L^{-1}}^{-1}$.

We denote by $L^\ast\colon V'\to V$ the adjoint operator of $L\colon V\to V'$.
Recall that the {\em singular values} of $L$ are the eigenvalues
of the conjugate positive semi-definite symmetric operator 
$\sqrt{L^\ast\,L}$.
Being real, and non negative, the singular values of $L$
 can be ordered as follows
$$s_1(L)\geq s_2(L)\geq \ldots\geq s_d(L)\geq 0\;.$$
The top singular value is
$s_1(L)=\norm{L}$, while  the  last  singular value is the minimum expansion $s_d(L)=\minexp(L)$.
The product of all singular values of $L$ will be referred as the
{\em determinant} of $L$ 
$$ \det(L):=\prod_{j=1}^d s_j(L) . $$
This determinant is the factor by which   $L$ expands $d$-volumes.

Given $\lambda>0$ we denote by $\med^\geq _\lambda(L)$
the direct sum of all singular directions of $L$ (eigen-directions of $L^\ast\,L$) associated with singular values $\mu\geq \lambda$. Likewise, we denote by $\med^<_\lambda(L)$
the direct sum of all singular directions of $L$  associated with singular values $\mu<\lambda$.
It follows from these definitons that
\begin{align*}
V= \med^\geq _\lambda(L)\oplus  \med^<_\lambda(L),\; 
  L (\med^\geq_\lambda(L) )= \med^\geq_\lambda(L^\ast)\; \text{ and }\; 
L (\med^<_\lambda(L) ) = \med^<_\lambda(L^\ast) 
\end{align*}
and similar relations hold for $L^\ast$.
To shorten notations we will simply write
$\med(L)$ instead of $\med^{\geq}_{\norm{L}}(L)$.
This subspace will be referred to as
the {\em most expanding direction} of $L$.

\bigskip

Given vectors $v_1,\ldots, v_n\in\Rr^d$,
 the linear subspace spanned by the vectors
  $v_1,\ldots, v_n$ is denoted by $\linspan{v_1,\ldots, v_n}$.
Let $\Ss$ denote the unit sphere in $\Rr^d$, i.e. $\Ss=\{\, v\in\Rr^d\,:\, \norm{v}=1\,\}$. Let $v,\eta\in\Ss$ be unit vectors and $u\in\Rr^d$. We denote by $\Ss^+_\eta$ the \textit{hemisphere associated with $\eta$}, 
$$
\Ss^+_\eta:=\{\, v\in \Ss\,:\, \langle v,\eta\rangle >0 \,\}\;.
$$
Let $\eta^\perp$ denote the orthogonal hyperplane to $\eta$. The {\em orthogonal projection of $u$ onto the hyperplane $\eta^\perp$} is,
  $$P_{\eta^\perp}(u):= u- \langle u,\eta\rangle \, \eta = u - P_\eta(u)\;,$$
  where $P_\eta(u)=\langle u,\eta\rangle \, \eta$, is
  the \textit{orthogonal projection of $u$ onto the line spanned by $\eta$}. The \textit{reflection of $u$ about the hyperplane $\eta^\perp$} is defined by,
$$
R_{\eta}(u):=u-2\left\langle u,\eta\right\rangle\eta.
$$
Finally, the \textit{parallel projection of $u$ along $v$ onto the hyperplane $\eta^\perp$} is 
$$P_{v,\eta^\perp}(u):= u- \frac{\langle u,\eta\rangle}{\langle v,\eta\rangle} \, v\;.$$

\bigskip

Denote by $\angle(v,w)$ the angle between two non-zero vectors in $\Rr^d$, defined as
$$ \angle (v,w):= \arccos \left(\frac{ \langle v,w\rangle }{\norm{v} \norm{w} } \right) \;. $$
The angle between a  non-zero vector $v\in \Rr^d$ and a linear subspace $E\subseteq \Rr^d$ is defined
to be
$$ \angle (v,E):= \min_{u\in E\setminus\{0\}} \angle(v,u) \;. $$
The angle between two linear subspaces $E$ and $F$ of $\Rr^d$ of the same dimension is
defined as
$$ \angle (E,F):= \max \{ \, \max_{u\in E\setminus\{0\}} \angle(u,F),
\max_{v\in F\setminus\{0\}} \angle(v,E)\, \} \;. $$
This angle defines a metric on the Grassmann manifold
$\Gr_k(\Rr^d)$ of all $k$-dimensional linear subspaces $E\subseteq \Rr^d$.

Given two linear subspaces $E,F\subseteq \Rr^d$, with
$\dim E\leq \dim( F^\perp)$,  we define the minimum angle
$$ \angle_{\min}(E,F):= \min\left\{ \angle(e,f)\colon\,
e\in E\setminus \{0\}, \, f\in F\setminus \{0\} \, \right\} .$$
Unlike the previous angle, this minimum angle is not even a pseudo-metric on $\Gr(\Rr^d)=\cup_{0\leq k\leq d} \Gr_k(\Rr^d)$.
Notice that $\angle_{\min}(E,F)>0$ if and only if $E\cap F=\{0\}$.
The minimum angle $\angle_{\min}(E,F)$ quantifies the `transversality' on the intersection $E\cap F$.

We denote by $\pi_{E,F^\perp}:E\to F^\perp$ the restriction to $E$ of the orthogonal projection to  $F^\perp$.

\begin{lemma}
\label{sin angle(E,F)}
Given linear subspaces $E,F\subseteq \Rr^d$ with $\dim E=\dim F$,
$$ \sin \angle(E,F)= \norm{\pi_{E,F^\perp}} = \norm{\pi_{F,E^\perp}} . $$
\end{lemma}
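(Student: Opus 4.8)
The plan is to separate the two one-sided angles in the definition of $\angle(E,F)$, compute each via orthogonal projection, and then prove that the two sides agree. First I would record the single-vector identity: for nonzero $u\in E$ the closest vector of $F$ to $u$ is its orthogonal projection onto $F$, so that $\sin\angle(u,F)=\norm{u-\Pi_F u}/\norm{u}=\norm{\pi_{E,F^\perp}(u)}/\norm{u}$, where $\Pi_E,\Pi_F$ denote the orthogonal projections of $\Rr^d$ onto $E,F$ and $\pi_{E,F^\perp}(u)=(I-\Pi_F)u$ is exactly the component of $u$ orthogonal to $F$. Taking the supremum over unit vectors $u\in E$ gives $\max_{u\in E\setminus\{0\}}\sin\angle(u,F)=\norm{\pi_{E,F^\perp}}$; since $t\mapsto\sin t$ is increasing on $[0,\pi/2]$ and each $\angle(u,F)$ lies there, this upgrades to $\sin\bigl(\max_{u\in E\setminus\{0\}}\angle(u,F)\bigr)=\norm{\pi_{E,F^\perp}}$. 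Exchanging the roles of $E$ and $F$ yields $\sin\bigl(\max_{v\in F\setminus\{0\}}\angle(v,E)\bigr)=\norm{\pi_{F,E^\perp}}$.

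The crux is then the symmetry $\norm{\pi_{E,F^\perp}}=\norm{\pi_{F,E^\perp}}$, and this is exactly where the hypothesis $\dim E=\dim F$ is used. I would view $\pi_{E,F^\perp}$ as the operator $(I-\Pi_F)\Pi_E$ on $\Rr^d$ (its operator norm equals that of the restriction, since the maximizing vectors lie in $E$), so that $\norm{\pi_{E,F^\perp}}^2=\norm{\Pi_E(I-\Pi_F)\Pi_E}$ because $I-\Pi_F$ is a self-adjoint projection. Setting $A:=\Pi_F\Pi_E$, one has $A^\ast A=\Pi_E\Pi_F\Pi_E$ and $AA^\ast=\Pi_F\Pi_E\Pi_F$, and on $E$ the operator $\Pi_E(I-\Pi_F)\Pi_E$ acts as $I-A^\ast A|_E$ while on $F$ the operator $\Pi_F(I-\Pi_E)\Pi_F$ acts as $I-AA^\ast|_F$. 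Now $A^\ast A$ and $AA^\ast$ share the same nonzero eigenvalues with multiplicities; since $E$ and $F$ have the same dimension $k$, each of $A^\ast A|_E$ and $AA^\ast|_F$ has precisely $k-\operatorname{rank}A$ zero eigenvalues, so the two have identical eigenvalue multisets, in particular the same smallest eigenvalue. Consequently $\norm{\pi_{E,F^\perp}}^2=1-\lambda_{\min}\!\bigl(A^\ast A|_E\bigr)=1-\lambda_{\min}\!\bigl(AA^\ast|_F\bigr)=\norm{\pi_{F,E^\perp}}^2$.

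Finally I would assemble the pieces: by the first paragraph both one-sided maxima equal $\arcsin\norm{\pi_{E,F^\perp}}=\arcsin\norm{\pi_{F,E^\perp}}$, hence coincide, so the maximum of the two defining $\angle(E,F)$ equals this common value; applying $\sin$ gives $\sin\angle(E,F)=\norm{\pi_{E,F^\perp}}=\norm{\pi_{F,E^\perp}}$. The single-vector projection identity is routine; the main obstacle is the operator-norm symmetry of the second paragraph, where obtaining an honest equality (rather than merely $\norm{\pi_{E,F^\perp}}\le\norm{\pi_{F,E^\perp}}$ or its reverse) genuinely relies on $\dim E=\dim F$ through the eigenvalue-and-dimension count: without equal dimensions only the maximum of the two one-sided quantities is forced to be symmetric.
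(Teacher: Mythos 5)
Your proof is correct, and its first and last steps coincide with the paper's: the single-vector identity $\sin\angle(u,F)=\norm{\pi_{E,F^\perp}(u)}/\norm{u}$ via distance to a subspace, followed by taking suprema. Where you diverge is the crux, the symmetry $\norm{\pi_{E,F^\perp}}=\norm{\pi_{F,E^\perp}}$. The paper gets it geometrically: since $\dim E=\dim F$ there is an orthogonal map carrying $F$ to $E$ (and, as is implicitly needed for the conjugation identity $\pi_{F,E^\perp}=\Phi^{-1}\circ\pi_{E,F^\perp}\circ\Phi$ to make sense, carrying $E$ back to $F$), so the two restricted projections are conjugate by an isometry and share their norm. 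You instead argue spectrally: writing $A=\Pi_F\Pi_E$, you reduce the two norms to $1-\lambda_{\min}(A^\ast A|_E)$ and $1-\lambda_{\min}(AA^\ast|_F)$ and match the full eigenvalue multisets using the standard fact that $A^\ast A$ and $AA^\ast$ agree on nonzero spectrum together with the count $k-\operatorname{rank}A$ of zero eigenvalues on each side. Both arguments are two faces of the principal-angle decomposition, and both use $\dim E=\dim F$ at exactly the same point; your version has the advantage of being entirely self-contained and of making explicit \emph{why} equal dimension is needed (without it only the nonzero spectra, hence only the larger of the two one-sided quantities, are forced to agree), whereas the paper's is shorter but leans on the existence of a subspace-swapping isometry, which it asserts rather than constructs. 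No gaps; all your intermediate claims (that the maximizer of $\norm{(I-\Pi_F)\Pi_E}$ lies in $E$, that $I-\Pi_F$ being a self-adjoint idempotent gives $\norm{(I-\Pi_F)\Pi_E}^2=\norm{\Pi_E(I-\Pi_F)\Pi_E}$, and that $A^\ast A|_E$ has spectrum in $[0,1]$ so the norm of $I-A^\ast A|_E$ is $1-\lambda_{\min}$) check out.
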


\begin{proof}
Given $u\in E\setminus\{0\}$ and $v\in F\setminus\{0\}$, we have
\begin{enumerate}
\item  $ {\norm{\pi_{E,F^\perp}(u)}}/{\norm{u}}=   {d(u,F)}/{\norm{u}}= \sin\left( \angle(u,F) \right)$,
\item  $\norm{\pi_{F,E^\perp}(v)}/{\norm{v}} =    {d(v,E)}/{\norm{v}} = \sin\left(  \angle(v,E) \right) $.
\end{enumerate}
Since $\dim E=\dim F$, there is an orthogonal linear map $\Phi\colon \Rr^d\to\Rr^d$ such that $\Phi(F)=E$.
By orthogonality one has $\Phi(F^\perp)=E^\perp$. Hence
$\pi_{F,E^\perp} = \Phi^{-1} \circ \pi_{E,F^\perp} \circ \Phi$,
which implies that $\norm{\pi_{E,F^\perp}} = \norm{\pi_{F,E^\perp}}$.
Thus the sine of the maxima in the definition of $\angle(E,F)$ coincides with this common norm.
\end{proof}

\begin{lemma}\label{angle:lemma}
Let $E,E'$ and $H$ be linear subspaces of $\Rr^d$ such that 
\begin{enumerate}
\item $\dim(E)=\dim(E')$,
\item $\angle(h,E)\geq \varepsilon$, for all \, $h\in H\setminus\{0\}$.
\end{enumerate}
Then
$$ \sin\left( \angle( E+H,E'+H) \right) \leq 
\frac{ \sin\left( \angle( E,E') \right) }{\sin \varepsilon } $$
\end{lemma}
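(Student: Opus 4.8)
The plan is to reduce the assertion about the enlarged subspaces $E+H$ and $E'+H$ to the hypotheses on $E,E'$ by means of Lemma~\ref{sin angle(E,F)}, which rewrites $\sin\angle$ as the norm of a restricted orthogonal projection. First I would dispose of the trivial regime: if $\angle(E,E')\geq\varepsilon$ then $\sin\angle(E,E')/\sin\varepsilon\geq 1$, and the inequality holds automatically since its left-hand side, being the sine of an angle, is at most $1$. So I may assume $\angle(E,E')<\varepsilon$. Hypothesis (2) says exactly that $\angle_{\min}(E,H)\geq\varepsilon$, whence $E\cap H=\{0\}$ and $\dim(E+H)=\dim E+\dim H$; a short triangle-inequality argument for angles (any $h\in H\setminus\{0\}$ satisfies $\angle(h,E')\geq\angle(h,E)-\angle(E,E')>0$) shows likewise that $E'\cap H=\{0\}$, so $\dim(E'+H)=\dim(E+H)$ and the left-hand angle is well defined.

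By Lemma~\ref{sin angle(E,F)} one has $\sin\angle(E+H,E'+H)=\norm{\pi_{E+H,(E'+H)^\perp}}$, and this operator norm equals the supremum of $d(w,E'+H)$ over unit vectors $w\in E+H$. I would estimate this distance for a fixed unit $w$. Writing $w=e+h$ with $e\in E$ and $h\in H$ (possible because $E\cap H=\{0\}$) and using that $h\in E'+H$, the distance to $E'+H$ is controlled by the distance from $e$ to $E'$ alone: choosing the point $y\in E'$ nearest to $e$ gives $y+h\in E'+H$ and $\norm{w-(y+h)}=\norm{e-y}=d(e,E')$. Applying Lemma~\ref{sin angle(E,F)} once more bounds $d(e,E')=\norm{\pi_{E,E'^\perp}(e)}\leq\sin\angle(E,E')\,\norm{e}$.

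It remains to bound $\norm{e}$ in terms of $\norm{w}=1$, and this is the step where hypothesis (2) does the real work. Projecting $w$ orthogonally onto $H^\perp$ annihilates $h$, so $\norm{\pi_{H^\perp}(e)}=\norm{\pi_{H^\perp}(w)}\leq 1$; on the other hand $\norm{\pi_{H^\perp}(e)}=d(e,H)=\sin\angle(e,H)\,\norm{e}\geq\sin\varepsilon\,\norm{e}$, because $\angle(e,H)\geq\angle_{\min}(E,H)\geq\varepsilon$. Hence $\norm{e}\leq 1/\sin\varepsilon$. Combining the two estimates gives $d(w,E'+H)\leq\sin\angle(E,E')/\sin\varepsilon$ for every unit $w\in E+H$, and taking the supremum over $w$ yields the claim. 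The only genuinely delicate point is the control of the oblique component $\norm{e}$: the decomposition $w=e+h$ is not orthogonal, so its summands could a priori be far larger than $w$ itself, and it is precisely the transversality bound $\angle_{\min}(E,H)\geq\varepsilon$ that prevents this blow-up and produces the factor $1/\sin\varepsilon$.
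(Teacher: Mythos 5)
Your proof is correct and follows essentially the same route as the paper's: both decompose a vector of $E+H$ into an $E$-part plus an $H$-part, bound the distance to $E'+H$ by the distance from the $E$-part to $E'$ (which Lemma~\ref{sin angle(E,F)} controls by $\sin\angle(E,E')$), and use the transversality hypothesis to bound the norm of the $E$-part by $1/\sin\varepsilon$; the only cosmetic difference is that the paper first restricts to $(E+H)\cap H^\perp$ while you work with arbitrary unit vectors of $E+H$ and project onto $H^\perp$ afterwards. Your additional remarks on the trivial case $\angle(E,E')\geq\varepsilon$ and on $\dim(E+H)=\dim(E'+H)$ (needed for the angle to be defined and for Lemma~\ref{sin angle(E,F)} to apply) are correct and fill in details the paper leaves implicit.
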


\begin{proof}
First notice that
$$ \angle( E+H,E'+H)= \angle( (E+H)\cap H^ \perp, (E'+H)\cap H^ \perp ) \;.$$
Given $u\in (E+H)\cap H^ \perp$ we can write $u=v-h$ with $v\in E$ and $h\in H$.
Hence, since $u\in H^\perp$,
\begin{align*}
\frac{ d(u, (E'+H)\cap H^ \perp)}{\norm{u}} &=
\frac{ d(u, E'+H)}{\norm{u}} = \frac{ d(v, E'+H)}{\norm{u}}\\
& \leq  \frac{ d(v, E')}{\norm{u}} 
= \frac{\norm{v}}{\norm{u}} \frac{ d(v, E')}{\norm{v}} \leq \frac{\norm{v}}{\norm{u}} \sin \left(\angle(E,E')\right)\\
& = \frac{\sin \left(\angle(E,E')\right)}{\sin\left(\angle(v,h)\right)} 
\leq \frac{\sin \left(\angle(E,E')\right)}{\sin\varepsilon} \;.
\end{align*}
On the last equality we use that $v=h+u$ is an orthogonal decomposition with $h\in H$ and $u\in H^ \perp$.
Thus taking the sup   in $u\in (E+H)\cap H^ \perp\setminus\{0\}$ we get 
$$ \sin\left( \angle( (E+H)\cap H^ \perp, (E'+H)\cap H^ \perp ) \right) \leq \frac{\sin \left(\angle(E,E')\right)}{\sin\varepsilon} \;.$$
\end{proof}

\begin{lemma}
\label{det<minexp}
Given linear subspaces $E,F\subseteq \Rr^d$
with $\dim E\leq \dim (F^\perp)$,
$$\det(\pi_{E,F^\perp})\leq \minexp (\pi_{E,F^\perp}).$$
Equality holds when $\dim E=1$.
\end{lemma}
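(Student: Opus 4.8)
The plan is to exploit the fact that $\pi_{E,F^\perp}$ is the restriction of an orthogonal projection, so it can only shrink lengths, and then to read off the inequality directly from the ordering of the singular values. Write $k=\dim E$; the map $\pi_{E,F^\perp}\colon E\to F^\perp$ has $k$ singular values $s_1\geq s_2\geq\cdots\geq s_k\geq 0$, so that by definition $\det(\pi_{E,F^\perp})=\prod_{j=1}^k s_j$ and $\minexp(\pi_{E,F^\perp})=s_k$.

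First I would bound the operator norm. For any $u\in E$ the orthogonal decomposition $u=P_{F^\perp}(u)+P_F(u)$ together with the Pythagorean identity gives $\norm{P_{F^\perp}(u)}^2=\norm{u}^2-\norm{P_F(u)}^2\leq\norm{u}^2$. Since $\pi_{E,F^\perp}(u)=P_{F^\perp}(u)$ for $u\in E$, this yields $\norm{\pi_{E,F^\perp}}=s_1\leq 1$, and hence every singular value satisfies $s_j\leq 1$.

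The inequality then follows by factoring out the smallest singular value:
$$\det(\pi_{E,F^\perp})=\prod_{j=1}^k s_j=s_k\prod_{j=1}^{k-1}s_j=\minexp(\pi_{E,F^\perp})\prod_{j=1}^{k-1}s_j\leq\minexp(\pi_{E,F^\perp}),$$
since the trailing product is a product of numbers in $[0,1]$ and is therefore at most $1$. When $\dim E=1$ there is a single singular value, the trailing product is empty (equal to $1$), and the inequality collapses to the equality $\det(\pi_{E,F^\perp})=\minexp(\pi_{E,F^\perp})$.

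I do not anticipate any genuine obstacle here; the argument is essentially the observation that a contraction has all singular values bounded by $1$. The only point requiring care is the bookkeeping when $\dim E<\dim(F^\perp)$: one must read $\det(\pi_{E,F^\perp})$ as the product of the $k=\dim E$ singular values of the (generally non-square) map $\pi_{E,F^\perp}$, i.e. the square roots of the eigenvalues of the positive semidefinite operator $\pi_{E,F^\perp}^\ast\,\pi_{E,F^\perp}$ on $E$, rather than as an ordinary determinant.
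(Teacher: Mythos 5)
Your argument is correct and is essentially the paper's own proof, which likewise observes that all singular values of $\pi_{E,F^\perp}$ lie in $[0,1]$ because it is the restriction of an orthogonal projection; you merely spell out the Pythagorean bound and the factoring of the smallest singular value explicitly. The treatment of the equality case $\dim E=1$ via the empty trailing product is the obvious completion of the same idea.
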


\begin{proof} 
Just notice that all singular values of $\pi_{E,F^\perp}$ are in the range $[0,1]$ because $\pi_{E,F^\perp}$ is the restriction of an orthogonal projection.  
\end{proof}

Given an integer $k\in\Nn$ and a linear subspace $E\subseteq \Rr^d$,
the {\em Grassmann space} of $k$-vectors in $E$ will be denoted by $\wedge_k(E)$. This space inherits a natural Euclidean structure from $E$ (see~\cite{Stern}).

\begin{lemma}
\label{anglo prod exterior}
Let $E,F\subseteq \Rr^d$ be linear subspaces with
orthonormal basis $\{e_1,\ldots, e_k\}$ and
$\{f_1,\ldots, f_r\}$ respectively such that $k\leq d-r$.
Let $e=e_1\wedge \ldots \wedge e_k\in \wedge_k(E)$ and
$f=f_1\wedge \ldots \wedge f_r\in \wedge_r(F)$. Then 
$$\sin \angle_{\min}(E,F)= \minexp(\pi_{E,F^\perp} ) \geq \det(\pi_{E,F^\perp}) = \norm{e\wedge f} . $$
\end{lemma}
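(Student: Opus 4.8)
The plan is to read the asserted chain
$\sin\angle_{\min}(E,F)=\minexp(\pi_{E,F^\perp})\geq\det(\pi_{E,F^\perp})=\norm{e\wedge f}$
from left to right as three separate assertions. The two outer equalities are genuinely geometric, while the middle inequality will be a direct citation; the hypothesis $k\leq d-r$ (equivalently $k+r\leq d$, i.e. $\dim E\leq\dim(F^\perp)$) enters in the middle step and guarantees that $e\wedge f$ lives in a nontrivial space $\wedge_{k+r}(\Rr^d)$.

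For the leftmost equality I would start from the elementary identity $\norm{\pi_{E,F^\perp}(u)}=d(u,F)=\norm{u}\,\sin\angle(u,F)$, valid for every $u\in E$ (this is exactly the content of the computation in Lemma~\ref{sin angle(E,F)}(1), which concerns a single vector and hence needs no equality of dimensions). Minimizing over unit vectors of $E$ and using that $\angle(u,F)\in[0,\pi/2]$, where $\sin$ is increasing, yields
\begin{align*}
\minexp(\pi_{E,F^\perp})=\min_{u\in E,\,\norm{u}=1}\sin\angle(u,F)=\sin\Big(\min_{u\in E\setminus\{0\}}\angle(u,F)\Big)=\sin\angle_{\min}(E,F),
\end{align*}
the last step being just the unravelling of $\angle(u,F)=\min_{f\in F\setminus\{0\}}\angle(u,f)$ together with the definition of $\angle_{\min}(E,F)$.

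The middle inequality $\minexp(\pi_{E,F^\perp})\geq\det(\pi_{E,F^\perp})$ is immediate from Lemma~\ref{det<minexp}, whose hypothesis $\dim E\leq\dim(F^\perp)$ holds by assumption. For the rightmost identity I would invoke the interpretation of $\det$ as a volume-scaling factor together with a ``base times height'' splitting of the exterior product. Since $\{f_1,\dots,f_r\}$ is orthonormal, $\norm{f}=1$, and peeling off this orthonormal block gives
\begin{align*}
\norm{e\wedge f}=\norm{f_1\wedge\cdots\wedge f_r\wedge e_1\wedge\cdots\wedge e_k}=\norm{f}\cdot\norm{\pi_{E,F^\perp}(e_1)\wedge\cdots\wedge\pi_{E,F^\perp}(e_k)},
\end{align*}
because $\pi_{E,F^\perp}(e_i)$ is precisely the orthogonal projection of $e_i$ onto $F^\perp$. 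The final factor equals $\det(\pi_{E,F^\perp})$: writing $L=\pi_{E,F^\perp}$, the Gram relation $\norm{L(e_1)\wedge\cdots\wedge L(e_k)}^2=\det(\langle L e_i,L e_j\rangle)=\det(L^\ast L)$ (valid since $\{e_i\}$ is orthonormal) gives $\norm{L(e_1)\wedge\cdots\wedge L(e_k)}=\sqrt{\det(L^\ast L)}=\prod_j s_j(L)=\det(L)$.

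I expect the only real work to be justifying the base-times-height splitting of the wedge product. This follows by choosing an orthonormal frame adapted to $F$ and evaluating the relevant Gram determinant through a Schur-complement reduction, in which the off-diagonal blocks drop out precisely because $\{f_1,\dots,f_r\}$ is orthonormal; I would state this as a short lemma rather than grind the block computation inline. Finally, I would note that all degenerate configurations are automatically consistent: if $E\cap F\neq\{0\}$ then $\angle_{\min}(E,F)=0$, the map $\pi_{E,F^\perp}$ fails to be injective so $\minexp=\det=0$, and $e\wedge f=0$ by linear dependence, so every term in the chain vanishes simultaneously.
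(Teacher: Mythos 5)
Your proposal is correct and follows essentially the same route as the paper: the left equality via the pointwise identity $\norm{\pi_{E,F^\perp}(u)}=\norm{u}\sin\angle(u,F)$ minimized over unit vectors, the middle inequality by citing Lemma~\ref{det<minexp}, and the right equality by replacing each $e_i$ with its projection onto $F^\perp$, factoring the norm of the wedge across the orthogonal blocks, and identifying $\norm{\wedge_k\pi_{E,F^\perp}(e)}$ with $\det(\pi_{E,F^\perp})$. The only difference is that you spell out the Gram-determinant justification of the ``base times height'' factorization, which the paper simply asserts.
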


\begin{proof}
Given a unit vector $v\in E$, by the proof of Lemma~\ref {sin angle(E,F)} we have
$\sin \angle(v,F)= \norm{\pi_{E,F^\perp}(v)}$ which implies that
$$\sin \angle_{\min}(E,F)= \minexp(\pi_{E,F^\perp} ) . $$
On the other hand
\begin{align*}
\norm{e\wedge f} 
&= \norm{( e_1\wedge \ldots \wedge e_k) \wedge (f_1\wedge \ldots \wedge f_r)  } \\
&= \norm{( \pi_{E,F^\perp}(e_1)\wedge \ldots \wedge \pi_{E,F^\perp}(e_k)) \wedge (f_1\wedge \ldots \wedge f_r)  } \\
&= \norm{  \pi_{E,F^\perp}(e_1)\wedge \ldots \wedge \pi_{E,F^\perp}(e_k)}\,\norm{  f_1\wedge \ldots \wedge f_r   } \\
&= \norm{  \wedge_k \pi_{E,F^\perp}(e)}\,\norm{  f  } = \det(\pi_{E,F^\perp})
\end{align*}
because $\norm{e}=\norm{f}=1$.
The middle inequality follows from Lemma~\ref{det<minexp}.
\end{proof}

\begin{lemma}
\label{anglo cumulativo}
Let $E\subseteq \Rr^d$ be a linear space  
and $\{v_1,\ldots, v_k\}$ be a family of unit vectors such that for all $1\leq i\leq k$,
$$ \angle_{\min}(\linspan{v_i}, E\oplus \linspan{v_1,\ldots, v_{i-1}} )\geq \varepsilon >0 .$$
Then
$$ \sin \angle_{\min}(E,  \linspan{v_1,\ldots, v_{k}} )\geq    (\sin \varepsilon)^k   .$$
\end{lemma}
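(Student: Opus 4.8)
The plan is to reduce this cumulative estimate to the single exterior-algebra bound already supplied by Lemma~\ref{anglo prod exterior}, and then to evaluate the resulting wedge norm by a Gram--Schmidt volume computation that turns the product structure hidden in the hypothesis into the factor $(\sin\varepsilon)^k$.

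First I would set $F_0:=\{0\}$ and $F_j:=\linspan{v_1,\ldots,v_j}$ for $1\leq j\leq k$. The hypothesis with index $i=j$ reads $\angle_{\min}(\linspan{v_j},E\oplus F_{j-1})\geq\varepsilon$; since $\linspan{v_j}$ is a line and the subspace $E\oplus F_{j-1}$ is symmetric under negation, this minimum angle is simply $\angle(v_j,E\oplus F_{j-1})$, which is positive, so $v_j\notin E\oplus F_{j-1}$. An easy induction on $j$ then gives linear independence of the $v_j$, hence $\dim F_k=k$ and $E\cap F_k=\{0\}$; in particular $\angle_{\min}(E,F_k)$ is defined, where I use the standing assumption $\dim E\leq\dim(F_k^\perp)$ carried by the notation $\angle_{\min}$.

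Next I fix an orthonormal basis $e_1,\ldots,e_m$ of $E$, set $e=e_1\wedge\cdots\wedge e_m$ (so $\norm{e}=1$), and let $f:=(v_1\wedge\cdots\wedge v_k)/\norm{v_1\wedge\cdots\wedge v_k}$ be the unit $k$-vector representing $F_k$; it differs only by a sign from the wedge of any orthonormal basis of $F_k$, so $\norm{e\wedge f}$ is exactly the quantity bounded below in Lemma~\ref{anglo prod exterior}. That lemma then gives
$$\sin\angle_{\min}(E,F_k)=\minexp(\pi_{E,F_k^\perp})\geq \norm{e\wedge f}.$$
Using Hadamard's inequality $\norm{v_1\wedge\cdots\wedge v_k}\leq\prod_{j=1}^k\norm{v_j}=1$ I obtain
$$\norm{e\wedge f}=\frac{\norm{e_1\wedge\cdots\wedge e_m\wedge v_1\wedge\cdots\wedge v_k}}{\norm{v_1\wedge\cdots\wedge v_k}}\geq \norm{e_1\wedge\cdots\wedge e_m\wedge v_1\wedge\cdots\wedge v_k}.$$
The right-hand norm is the $(m+k)$-volume of the parallelepiped on $e_1,\ldots,e_m,v_1,\ldots,v_k$, and its base-times-height (Gram--Schmidt) decomposition factors it as
$$\Big(\prod_{i=1}^m d(e_i,\linspan{e_1,\ldots,e_{i-1}})\Big)\prod_{j=1}^k d(v_j,E\oplus F_{j-1})=\prod_{j=1}^k d(v_j,E\oplus F_{j-1}),$$
the first product being $1$ by orthonormality. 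Each height satisfies $d(v_j,E\oplus F_{j-1})=\sin\angle(v_j,E\oplus F_{j-1})\geq\sin\varepsilon$ (as $\norm{v_j}=1$, and $\sin$ is increasing on $[0,\pi/2]$), so the product is at least $(\sin\varepsilon)^k$; chaining the three displays yields the claim.

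The only genuinely delicate point is the volume factorization: I must make sure the base block $e_1,\ldots,e_m$ contributes exactly the factor $1$ and that each successive height is measured against $E\oplus F_{j-1}$ rather than $F_{j-1}$ alone — it is precisely the presence of $E$ inside every subspace that matches the subspaces appearing in the hypothesis. Everything else is bookkeeping: the reduction to $\norm{e\wedge f}$ is handed to us by Lemma~\ref{anglo prod exterior}, and the passage from the honest wedge $v_1\wedge\cdots\wedge v_k$ to its normalization costs only the harmless Hadamard factor $\norm{v_1\wedge\cdots\wedge v_k}\leq 1$. One could instead induct on $k$, but the inductive step would amount to re-deriving exactly this volume identity, so the exterior-algebra computation is the more economical route.
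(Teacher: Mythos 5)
Your proof is correct and follows essentially the same route as the paper's: both reduce the claim to Lemma~\ref{anglo prod exterior} applied to $E$ and $\linspan{v_1,\ldots,v_k}$, absorb the denominator via Hadamard's inequality $\norm{v_1\wedge\cdots\wedge v_k}\leq 1$, and bound $\norm{e_1\wedge\cdots\wedge e_r\wedge v_1\wedge\cdots\wedge v_k}$ below by $(\sin\varepsilon)^k$ as a product of heights against $E\oplus\linspan{v_1,\ldots,v_{j-1}}$. The paper phrases that last step as a telescoping product of wedge-norm ratios (each identified with the hypothesis angle via the equality case of Lemma~\ref{anglo prod exterior} for lines), which is the same Gram--Schmidt volume factorization you write out explicitly.
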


\begin{proof}
Let $\{e_1,\ldots, e_r\}$ be an orthonormal basis of $E$.
We apply Lemma \ref{anglo prod exterior}
 to the subspaces $\linspan{v_i}$ and $E\oplus \linspan{v_1,\ldots, v_{i-1}}$. Since the first subspace has dimension $1$ the inequality in this lemma becomes an equality. Hence, because $\norm{v_i}=1$ we have
$$ \frac{\norm{e_1\wedge \ldots \wedge e_r \wedge v_1 \wedge \ldots
\wedge  v_{i}} }{\norm{e_1\wedge \ldots \wedge e_r \wedge v_1 \wedge \ldots
\wedge v_{i-1}}}\geq \sin\varepsilon $$
for all $1\leq i \leq k$.
Multiplying these inequalities and using Lemma~\ref{anglo prod exterior} again we obtain
\begin{align*}
\sin \angle_{\min}(E,  \linspan{v_1,\ldots, v_{k}} ) &\geq  
\frac{\norm{e_1\wedge \ldots \wedge e_r \wedge v_1 \wedge \ldots
\wedge  v_{k}}}{\norm{e_1 \wedge \ldots
\wedge  e_{r}}\,\norm{v_1 \wedge \ldots
\wedge  v_{k}}} \\
&\geq  
\frac{\norm{e_1\wedge \ldots \wedge e_r \wedge v_1 \wedge \ldots
\wedge  v_{k}}}{\norm{e_1 \wedge \ldots
\wedge  e_{r}}} \\
&=\prod_{i=1}^k \frac{\norm{e_1\wedge \ldots \wedge e_r \wedge v_1 \wedge \ldots
\wedge  v_{i}} }{\norm{e_1\wedge \ldots \wedge e_r \wedge v_1 \wedge \ldots
\wedge v_{i-1}}}\geq (\sin \varepsilon)^k .
\end{align*}
We have used above that 
$\norm{v_1\wedge \ldots \wedge v_k}\leq \norm{v_1} \cdots \norm{v_k} = 1$.
\end{proof}

\subsection{Billiard map}
Suppose that $P$ has $N$ faces (of dimension $d-1$) which we denote by $F_1,\ldots, F_N$. For each $i=1,\ldots, N$, denote by $\eta_i$ the interior unit normal vector to the face $F_i$. Also denote by $\Pi_i$ the hyperplane that supports the face $F_i$. We write the interior of $F_i$ as $F_i^\circ$,
and its $(d-2)$-dimensional boundary as $\partial F_i$.
Define $\partial P=\bigcup_{i=1}^N F_i$,
and the $(d-2)$-skeleton $\Sigma P= \bigcup_{i=1}^N \partial F_i$. 
Finally define
$$ M:=\bigcup_{i=1}^N F_i^\circ\times \Ss^ +_{\eta_i}\;.$$
The domain of the billiard map $\Phi_P$ is the set of points $(p,v)\in M$
such that the half-line $\{\, p+t\,v\,:\, t\geq 0\}$ does not intersect the skeleton
$\Sigma P$. We denote this set by $M'$. Clearly, $M'$ is the complement of a co-dimension two subset of $M$. 

Now the billiard map $\Phi_P:M'\to M$ is defined as follows. Given $x=(p,v)\in M'$, let $\tau=\tau(p,v)>0$ be minimum $t>0$ such that 
$p+t \, v\in F'_j$ for some $j=1,\ldots,N$. The real number $\tau$ is called the \textit{flight time} of $(p,v)$. Then the billiard map is defined by
$$\Phi_{P}(x)=(p+\tau\, v,  R_{\eta_j}(v) ).$$

Note that the billiard map $\Phi_P$ is a piecewise smooth map and it has finitely many domains of continuity. The number of domains of continuity is at most $N(N-1)$, which is the number of $2$-permutations of $N$ faces. If $P$ is convex, then all permutations define a branch map. 

Let $(p',v')=\Phi_P(p,v)$ for $(p,v)\in M'$. It is easy to obtain a formula for the branch maps and its derivatives. 

\begin{proposition}
\label{prop:DPhi}
Suppose that $(p'_i,v'_i)=\Phi_P(p_i,v_i)$ for some $p_i\in F_i^\circ$ such that $p'_i\in F'_j$ with $i\neq j$. For every $x=(p,v)\in F_i^\circ\times \Ss_{\eta_i}^+$ such that $p'\in F'_j$ we have
$$ \Phi_P(x)=\left( p_j+P_{v,\eta_j^\perp}(p-p_j), R_{\eta_j}(v)\,\right)\;.$$
Moreover
$$ D\Phi_P(x)(u,w)=\left( \,  P_{v,\eta_j^\perp}(u+\gamma(x)\,w), R_{\eta_j}(w)\,\right)\;,$$
where 
$$
\gamma(x)=\frac{\langle p-p_j,\eta_j \rangle}{\langle v,\eta_j \rangle}\,. 
$$

\end{proposition}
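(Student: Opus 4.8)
The plan is to first write the relevant branch of $\Phi_P$ explicitly and then differentiate it componentwise. Fix the auxiliary point $p_j$ to be \emph{any} point of the supporting hyperplane $\Pi_j$ containing the face $F'_j$; the resulting formula will not depend on this choice. For $x=(p,v)$ whose forward trajectory next meets $F_j^\circ$, the flight time $\tau=\tau(x)$ is characterised by $p+\tau v\in\Pi_j$, i.e. by $\langle p+\tau v-p_j,\eta_j\rangle=0$. Solving this scalar linear equation gives $\tau=-\langle p-p_j,\eta_j\rangle/\langle v,\eta_j\rangle$, which is well defined and positive since $\langle v,\eta_j\rangle<0$ (the incoming velocity crosses $\Pi_j$) while $\langle p-p_j,\eta_j\rangle>0$ ($p$ lies on the inner side of $\Pi_j$). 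Substituting back, $p+\tau v=p-\tfrac{\langle p-p_j,\eta_j\rangle}{\langle v,\eta_j\rangle}\,v=p_j+P_{v,\eta_j^\perp}(p-p_j)$, which is the first component; the second component is $R_{\eta_j}(v)$ by the definition of specular reflection. Since the condition ``the trajectory of $x$ meets $F_j^\circ$'' is open, this formula holds on a neighbourhood of $(p_i,v_i)$ and $\Phi_P$ is smooth there, so we may differentiate.

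For the derivative I would treat the two components separately. The velocity component $v\mapsto R_{\eta_j}(v)=v-2\langle v,\eta_j\rangle\eta_j$ is linear, so its differential in the direction $w$ is again $R_{\eta_j}(w)$. For the position component $F(p,v)=p_j+P_{v,\eta_j^\perp}(p-p_j)=p-\tfrac{\langle p-p_j,\eta_j\rangle}{\langle v,\eta_j\rangle}\,v$ I would split $DF=\partial_pF+\partial_vF$. Holding $v$ fixed, $F$ is affine in $p$ and $\partial_pF(u)=u-\tfrac{\langle u,\eta_j\rangle}{\langle v,\eta_j\rangle}\,v=P_{v,\eta_j^\perp}(u)$. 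Holding $p$ fixed, the dependence on $v$ enters only through the vector $v$ and the denominator $\langle v,\eta_j\rangle$, so the quotient rule applied to $v\mapsto v/\langle v,\eta_j\rangle$ yields $\partial_vF(w)=\gamma(x)\,\big(w-\tfrac{\langle w,\eta_j\rangle}{\langle v,\eta_j\rangle}\,v\big)=\gamma(x)\,P_{v,\eta_j^\perp}(w)$, where $\gamma(x)$ is the flight-time coefficient. Adding the two contributions and using linearity of $P_{v,\eta_j^\perp}$ recombines them into $P_{v,\eta_j^\perp}(u+\gamma(x)\,w)$, which together with $R_{\eta_j}(w)$ gives the stated derivative.

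The computation is elementary, so the only real work is bookkeeping. The main subtlety is the $v$-differentiation: the parallel projection depends on $v$ both through its projection direction $v$ and through the denominator $\langle v,\eta_j\rangle$, and one must check that the two terms produced by the quotient rule collapse back into a single parallel projection rather than a generic linear map. I would take special care with the sign in $\partial_vF$, since $\gamma(x)$ is exactly $\pm$ the flight time $\tau$ occurring in $p'=p+\tau v$, and a sign slip in that step is the easiest error to make; a quick $d=2$ test (a ray striking a coordinate hyperplane) pins the sign down. Finally, I would note the independence of the whole formula from the chosen reference point $p_j\in\Pi_j$: replacing $p_j$ by another point of $\Pi_j$ changes $p-p_j$ by a vector of $\eta_j^\perp$, which $P_{v,\eta_j^\perp}$ fixes, so both the branch map and its derivative are unaffected.
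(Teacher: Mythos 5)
Your proposal follows exactly the paper's route: write the branch explicitly via the flight time, then differentiate the position component with the quotient rule in $v$, so there is nothing methodologically new to compare. One caution on the step you yourself flag as the danger spot: carrying out the quotient rule honestly on $F(p,v)=p-\langle p-p_j,\eta_j\rangle\,v/\langle v,\eta_j\rangle$ gives
$\partial_vF(w)=-\frac{\langle p-p_j,\eta_j\rangle}{\langle v,\eta_j\rangle}\,P_{v,\eta_j^\perp}(w)=\tau(p,v)\,P_{v,\eta_j^\perp}(w)$,
i.e.\ the coefficient of $w$ is the positive flight time $\tau(p,v)=\langle p_j-p,\eta_j\rangle/\langle v,\eta_j\rangle=-\gamma(x)$, which agrees with Proposition~\ref{prop derivative in Jacobi coordinates} but is opposite in sign to the $\gamma(x)$ you (and the statement, and the paper's own formula for $D\Psi_\eta$) end up with; the $d=2$ test you propose would catch this, so do run it rather than defer to the displayed formula.
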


\begin{proof}
Recall that $p'=p+\tau(p,v)v$ where $\tau(p,v)$ is the length of the vector $p'-p$. Taking the inner product with $\eta_j$ in both sides of the equation and noting that $\langle p'-p_j,\eta_j \rangle=0$, we get
$$
\tau(p,v)=\frac{\langle p'-p,\eta_j \rangle}{\langle v,\eta_j \rangle}=\frac{\langle p_j-p,\eta_j \rangle}{\langle v,\eta_j \rangle}\,.
$$

So
$$
p'=p_j+\left((p-p_j)-\frac{\langle p-p_j,\eta_j \rangle}{\langle v,\eta_j \rangle} v\right)=p_j+P_{v,\eta_j^\perp}(p-p_j)\,.
$$
To prove the formula for the derivative, define the map $\Psi_{\eta}:(p,v)\mapsto P_{v,\eta^\perp}(p)$ for any given $\eta\in \Ss$. The claim follows from the formula
$$
D\Psi_\eta(x)(u,w)=P_{v,\eta^\perp}(u)+\frac{\langle p,\eta \rangle}{\langle v,\eta \rangle}P_{v,\eta^\perp}(w).
$$
\end{proof}

\subsection{Contracting  reflection laws}
\label{subsec: contractive laws}
A {\em contracting law} is any family 
$\{ \, C_\eta:\Ss^+_\eta\to \Ss^+_\eta\,\}_{\eta\in\Ss}$ of class $C^2$ mappings
that satisfies for every $\eta\in\Ss$,
\begin{enumerate}
\item[(a)] $C_\eta(\eta)=\eta$,
\item[(b)] there are non-negative $C^2$ functions $a_\eta,b_\eta:\Ss^+_\eta\to[0,+\infty)$ such that,
$$
C_\eta (v) = a_\eta(v)P_\eta(v)+b_\eta(v)P_{\eta^\perp}(v),\quad\forall\,v\in \Ss^+_\eta.
$$
\item[(c)] $0<\sup\{\, \norm{DC_\eta(x)}\,:\,  x\in \Ss^+_\eta\,\} < 1$, 
\item[(d)] $O\circ C_\eta = C_{O(\eta)}\circ O$, for every rotation $O\in\OO(n,\Rr)$.
\end{enumerate}

A contracting law can be uniquely characterized by a single $C^2$ map of the interval $\left[0,\frac{\pi}{2}\right)$ as the following proposition shows.

\begin{proposition} 
\label{prop contractive law}
Given a  contracting law
$\{ \, C_\eta:\Ss^+_\eta\to \Ss^+_\eta\,\}_{\eta\in\Ss}$,
there is a class $C^2$ mapping
$f:\left[0,\frac{\pi}{2}\right)\to \left[0,\frac{\pi}{2}\right)$
such that
\begin{enumerate}
\item[(a)] $f(0)=0$,  
\item[(b)] $0<\sup\{\, |f'(\theta)| \,:\,  0\leq\theta<\frac{\pi}{2}\,\} < 1$, 
\item[(c)] for every $\eta\in\Ss$, and $v\in\Ss^ +_\eta$,
$$
C_\eta(v) = \frac{\cos f(\theta) }{\cos\theta}P_\eta(v) + 
\frac{\sin f(\theta) }{\sin\theta}P_{\eta^\perp}(v)\;,
$$
where
$\theta  =\arccos \langle v,\eta\rangle$ is the angle between $\eta$ and $v$,
\item[(d)] for every $\eta\in\Ss$,
$$\sup_{x\in \Ss^+_\eta} \norm{DC_\eta(x)} = \sup_{0\leq\theta<\pi/2} |f'(\theta)|\;.$$
\end{enumerate}
\end{proposition}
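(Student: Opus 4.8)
The plan is to use the rotational equivariance (d) of the contracting law to show that the coefficients $a_\eta,b_\eta$ in (b) depend only on the angle $\theta=\arccos\langle v,\eta\rangle$, and then to read $f$ off from them. Fix $\eta\in\Ss$ and $v,v'\in\Ss^+_\eta$ with $\angle(v,\eta)=\angle(v',\eta)=\theta$. Since the stabiliser of $\eta$ in the orthogonal group acts transitively on the latitude sphere $\{\,u\in\Ss:\langle u,\eta\rangle=\cos\theta\,\}$, there is an orthogonal map $O$ with $O\eta=\eta$ and $Ov=v'$; then (d) gives $C_\eta(v')=O\,C_\eta(v)$, and since $O$ commutes with $P_\eta$ and $P_{\eta^\perp}$ (it preserves $\eta$ and $\eta^\perp$) the decomposition (b) yields $a_\eta(v')=a_\eta(v)$ and $b_\eta(v')=b_\eta(v)$. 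Applying (d) once more with an orthogonal map carrying $\eta$ to an arbitrary $\eta'$ removes the dependence on $\eta$, so there are functions $a,b\ge0$ of $\theta$ alone with $C_\eta(v)=a(\theta)\,P_\eta(v)+b(\theta)\,P_{\eta^\perp}(v)$. As $P_\eta(v)\perp P_{\eta^\perp}(v)$ have norms $\cos\theta$ and $\sin\theta$ and $\norm{C_\eta(v)}=1$, we get $a(\theta)^2\cos^2\theta+b(\theta)^2\sin^2\theta=1$; since $\langle C_\eta(v),\eta\rangle=a(\theta)\cos\theta>0$ I can define $f(\theta)\in[0,\tfrac\pi2)$ by $\cos f(\theta)=a(\theta)\cos\theta$ and $\sin f(\theta)=b(\theta)\sin\theta$. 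This is exactly the formula in (c), and (a), i.e. $C_\eta(\eta)=\eta$, forces $a(0)=1$ and hence $f(0)=0$.

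For the $C^2$ regularity I would fix $\eta$ and a unit vector $w\perp\eta$ and restrict $C_\eta$ to the analytic unit-speed path $v(\theta)=\cos\theta\,\eta+\sin\theta\,w$, along which $C_\eta(v(\theta))=\cos f(\theta)\,\eta+\sin f(\theta)\,w$. The coordinate functions $g(\theta)=\langle C_\eta(v(\theta)),\eta\rangle=\cos f(\theta)$ and $h(\theta)=\langle C_\eta(v(\theta)),w\rangle=\sin f(\theta)$ are $C^2$ because $C_\eta$ is $C^2$ and $v(\theta)$ is analytic; since $g=\cos f>0$ on $[0,\tfrac\pi2)$, the quotient $h/g$ is $C^2$ and $f=\arctan(h/g)$ is $C^2$, recovering $f(0)=0$ from $g(0)=1$, $h(0)=0$.

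To obtain (d), and with it (b), I would compute $DC_\eta$ through the same decomposition. Writing $C_\eta(v)=(a(\theta)-b(\theta))\cos\theta\,\eta+b(\theta)\,v$ and splitting the tangent space $v^\perp$ into the radial line $v^\perp\cap\linspan{\eta,v}$ and its $(d-2)$-dimensional transverse complement, two computations give the singular directions. Differentiating $C_\eta(v(\theta))$ in $\theta$ along the great circle produces a tangent vector of norm $|f'(\theta)|$ (against the unit radial tangent $\dot v(\theta)$), so the radial factor is $|f'(\theta)|$; for a direction $u$ with $u\perp v$ and $u\perp\eta$ the angle $\theta$ is stationary, so $DC_\eta(v)\,u=b(\theta)\,u$ and the transverse factor is $b(\theta)=\sin f(\theta)/\sin\theta$. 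These two families are mapped orthogonally, hence they realise the singular value decomposition and $\norm{DC_\eta(v)}=\max\{\,|f'(\theta)|,\ \sin f(\theta)/\sin\theta\,\}$. Property (b) is then immediate: $|f'|\le\norm{DC_\eta}$ gives $\sup|f'|\le\sup\norm{DC_\eta}<1$ by hypothesis (c), while $\sup|f'|=0$ would force $f\equiv0$, hence $C_\eta\equiv\eta$ and $DC_\eta\equiv0$, contradicting $\sup\norm{DC_\eta}>0$.

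The main obstacle is the final equality in (d). The computation above shows $\sup_{x}\norm{DC_\eta(x)}=\sup_\theta\max\{\,|f'(\theta)|,\ \sin f(\theta)/\sin\theta\,\}$, so reducing this to $\sup_\theta|f'(\theta)|$ amounts to comparing the transverse factor $\sin f(\theta)/\sin\theta$ with the radial factor $|f'(\theta)|$. When $d=2$ there is no transverse subspace and the identity holds automatically; for $d\ge3$ the delicate point is to control $\sin f(\theta)/\sin\theta$ by $\sup_\theta|f'(\theta)|$ using $f(0)=0$ together with the contraction bound from hypothesis (c), and this comparison is where the remaining work lies.
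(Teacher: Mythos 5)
Your reduction of the proposition to properties of $f$ is correct and, up to the point where you stop, follows the same route as the paper: the equivariance axiom of the contracting law forces $a_\eta,b_\eta$ to depend only on $\theta$, the unit-norm constraint lets you set $\cos f(\theta)=\langle C_\eta(v),\eta\rangle$ and $\sin f(\theta)=b(\theta)\sin\theta$, and items (a), (c), the $C^2$ regularity of $f$, and both inequalities in (b) come out cleanly ($\sup|f'|\le\sup\norm{DC_\eta}<1$ because $|f'(\theta)|$ is the norm of $DC_\eta$ applied to the unit radial tangent vector, and $\sup|f'|>0$ because $f\equiv 0$ would make $C_\eta$ constant). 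The paper's own proof is exactly this equivariance argument followed by the sentence ``the remaining properties follow immediately,'' so you have not missed an argument that the paper actually supplies.

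The gap you flag in item (d) is genuine, and in fact it cannot be closed when $d\ge 3$. Your singular-value computation $\norm{DC_\eta(v)}=\max\{|f'(\theta)|,\ \sin f(\theta)/\sin\theta\}$ is correct (the latitude sphere of radius $\sin\theta$ is mapped conformally onto the one of radius $\sin f(\theta)$), so (d) is equivalent to $\sup_\theta \sin f(\theta)/\sin\theta\le\sup_\theta|f'(\theta)|$. The natural attempt --- $f(0)=0$ and $|f'|\le\lambda$ give $f(\theta)\le\lambda\theta$, hence $\sin f(\theta)\le\sin(\lambda\theta)$ --- runs into the concavity of $\sin$ on $[0,\pi/2]$, which yields $\sin(\lambda\theta)\ge\lambda\sin\theta$, i.e.\ the wrong direction. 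Concretely, $f(\theta)=\theta/2$ defines a legitimate contracting law for $d\ge3$: here $a_\eta(v)=\cos(\theta/2)/\cos\theta$ and $b_\eta(v)=1/\sqrt{2(1+\langle v,\eta\rangle)}$ are positive and smooth, $\sin f(\theta)/\sin\theta=1/(2\cos(\theta/2))$, so $\sup_x\norm{DC_\eta(x)}=1/\sqrt{2}<1$ and all axioms hold, yet $\sup_\theta|f'(\theta)|=1/2<1/\sqrt{2}$. Thus item (d) holds as stated only for $d=2$; in general only the inequality $\sup_\theta|f'(\theta)|\le\sup_{x}\norm{DC_\eta(x)}$ survives. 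The damage downstream is limited: the later arguments only need some uniform bound $\norm{DC_\eta}\le\lambda<1$, which is axiom (c) of a contracting law, although in the proof of Proposition~\ref{prop:partialhyperbolic} the constant $\lambda(f)=\sup|f'|$ should accordingly be read as $\sup_x\norm{DC_\eta(x)}$.
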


\begin{proof}
Let $\eta\in\Ss$ and $v\in \Ss_\eta^+$. By item (b) of the definition of a contracting law we can write 
$$
C_\eta (v) = a_\eta(v)P_\eta(v)+b_\eta(v)P_{\eta^\perp}(v)
$$
where $a_\eta$ and $b_\eta$ are non-negative $C^2$ functions. Taking the inner product with $\eta$ on both sides of the previous equation we get,
$$
a_\eta (v) = \frac{\langle C_\eta v,\eta\rangle}{\cos\theta},
$$
where $\theta=\arccos \langle v,\eta\rangle\in[0,\frac\pi2)$ is the angle formed by the vectors $v$ and $\eta$. By item (d) we conclude that $\langle C_\eta(v),\eta\rangle = \langle C_{O(\eta)} ( O(v)),O(\eta)\rangle$, thus its value depends only on the angle $\theta$. So, there is a $C^2$ function $f:[0,\frac\pi2)\to[0,\frac\pi2)$ such that $\langle C_\eta(v),\eta\rangle=\cos f(\theta)$. Similarly, we conclude that 
$$
b_\eta(v) = \frac{\sin f(\theta)}{\sin\theta}.
$$
This shows (c). The remaining properties follow immediately.
\end{proof}

A $C^2$ mapping $f:\left[0,\frac{\pi}{2}\right)\to \left[0,\frac{\pi}{2}\right)$ satisfying (a)-(d) above is called a \textit{contracting reflection law}. We also define
$$
\lambda(f):=\sup_{0\leq\theta<\pi/2} |f'(\theta)|.
$$

\subsection{Contracting billiard map}
Given a contracting law $\{C_\eta\}$ with contracting reflection law $f$, define the map $\chi_f:M\to M$ by $\chi_f(p,v)=(p,C_{\eta(p)}(v))$ where $\eta(p)$ denotes the interior unit normal of the face of the polytope where $p$ lies. The \textit{contracting billiard map} $\Phi_{f,P}:M'\to M$ is 
$$
\Phi_{f,P}=\chi_f\circ\Phi_P.
$$
There is a system of coordinates which is convenient to represent the derivative of the contracting billiard map.
For each $x=(p,v)\in M$ define $\Psi_{x}:T_{x} M\to v^\perp\times v^\perp$ by
$$ \Psi_x(u,w)= \left(P_{v^\perp}(u), w\right)\;. $$
The previous linear isomorphism will be referred as {\em Jacobi coordinates}
on the tangent space $T_{x}M$.
We shall use the notation  $(J,J')$ to denote an element in $v^\perp\times v^\perp$.
The following proposition gives a formula for the derivative of the contracting billiard map
in terms of Jacobi co-ordinates.

\begin{proposition} 
\label{prop derivative in Jacobi coordinates}
Let $x=(p,v)\in M'$ and suppose that $x'=(p',v')=\Phi_{f,P}(x)$ with $p'\in F'_j$. Then 
$\Psi_{x'}\circ D\Phi_{f,P}(x)\circ \Psi_{x}^{-1}$ is given by
$$ (J,J')\mapsto\left( 
\,  P_{v'^ \perp} \circ P_{v,\eta_j^\perp}(J+\tau(p,v) \,J'),\,  
(D C_{\eta_j})_{ R_{\eta_j}(v)} R_{\eta_j}(J') 
\right) \;.$$
Moreover, if $\theta=\arccos |\langle v,\eta_j\rangle|$, then
$$ \left|\frac{\langle v',\eta_j\rangle}{\langle v,\eta_j\rangle}\right|=\frac{\cos f(\theta)}{\cos\theta} >1\;. $$
\end{proposition}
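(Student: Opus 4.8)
The plan is to differentiate the factorization $\Phi_{f,P}=\chi_f\circ\Phi_P$ by the chain rule and then conjugate the result by the Jacobi charts $\Psi_x,\Psi_{x'}$, where a single cancellation coming from the parallel projection produces the stated formula. Write $x=(p,v)$ with $p\in F_i^\circ$, so that $T_xM=\eta_i^\perp\times v^\perp$. Put $\hat x:=\Phi_P(x)=(p',\hat v)$ with $\hat v=R_{\eta_j}(v)$, and $x'=\chi_f(\hat x)=(p',v')$ with $v'=C_{\eta_j}(\hat v)$, so that $T_{\hat x}M=\eta_j^\perp\times\hat v^\perp$ and $T_{x'}M=\eta_j^\perp\times v'^\perp$. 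Since $\chi_f$ fixes the base point, which remains in $F_j^\circ$ where $\eta(\cdot)\equiv\eta_j$ is constant, the $p$-derivative of $C_{\eta(p)}(v)$ vanishes and $D\chi_f(\hat x)(a,b)=(a,(DC_{\eta_j})_{\hat v}\,b)$. Combining this with the formula for $D\Phi_P$ in Proposition~\ref{prop:DPhi} gives
$$D\Phi_{f,P}(x)(u,w)=\left(P_{v,\eta_j^\perp}(u+\gamma(x)\,w),\ (DC_{\eta_j})_{\hat v}\,R_{\eta_j}(w)\right).$$

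Next I transport this into Jacobi coordinates. Given $(J,J')\in v^\perp\times v^\perp$, the preimage $\Psi_x^{-1}(J,J')=(u,J')$ has $u$ equal to the unique vector of $\eta_i^\perp$ with $P_{v^\perp}(u)=J$; writing $u=J+s\,v$, the scalar $s$ is fixed by $\langle u,\eta_i\rangle=0$ (legitimate since $\langle v,\eta_i\rangle\neq0$), but its precise value is irrelevant. Indeed, the key observation is that $P_{v,\eta_j^\perp}$ is the projection \emph{along} $v$, so $P_{v,\eta_j^\perp}(v)=0$ and the term $s\,v$ is annihilated:
$$P_{v,\eta_j^\perp}(u+\gamma(x)\,J')=P_{v,\eta_j^\perp}(J+\gamma(x)\,J').$$
By the computation of the flight time in the proof of Proposition~\ref{prop:DPhi}, the coefficient $\gamma(x)$ equals $\tau(p,v)$ (one should double-check the sign convention here, since $\tau(p,v)=\langle p_j-p,\eta_j\rangle/\langle v,\eta_j\rangle$). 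Applying $\Psi_{x'}$, which projects the first component onto $v'^\perp$ and leaves the second unchanged, yields
$$\Psi_{x'}\circ D\Phi_{f,P}(x)\circ\Psi_x^{-1}(J,J')=\left(P_{v'^\perp}\circ P_{v,\eta_j^\perp}(J+\tau(p,v)\,J'),\ (DC_{\eta_j})_{R_{\eta_j}(v)}R_{\eta_j}(J')\right),$$
which is exactly the claimed expression.

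For the final estimate, note that at the collision the velocity $v$ enters $F_j$ from its interior side, so $\langle v,\eta_j\rangle<0$ and $|\langle v,\eta_j\rangle|=\cos\theta$ with $\theta=\arccos|\langle v,\eta_j\rangle|\in[0,\pi/2)$; consequently $\hat v=R_{\eta_j}(v)\in\Ss_{\eta_j}^+$ satisfies $\langle\hat v,\eta_j\rangle=\cos\theta$. Taking the inner product with $\eta_j$ of the identity in Proposition~\ref{prop contractive law}(c) applied to $v'=C_{\eta_j}(\hat v)$, and using $P_{\eta_j^\perp}(\hat v)\perp\eta_j$ together with $\langle P_{\eta_j}(\hat v),\eta_j\rangle=\langle\hat v,\eta_j\rangle=\cos\theta$, gives $\langle v',\eta_j\rangle=\cos f(\theta)$. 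Hence
$$\left|\frac{\langle v',\eta_j\rangle}{\langle v,\eta_j\rangle}\right|=\frac{\cos f(\theta)}{\cos\theta}.$$
Since $f(0)=0$ and $\sup|f'|<1$, the mean value theorem together with $f\geq0$ forces $0\leq f(\theta)<\theta$ for $\theta>0$, whence $\cos f(\theta)>\cos\theta$ and the ratio exceeds $1$.

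The main thing to get right is the bookkeeping of the three tangent-space identifications $\eta_i^\perp\times v^\perp$, $\eta_j^\perp\times\hat v^\perp$ and $\eta_j^\perp\times v'^\perp$, and in particular the cancellation $P_{v,\eta_j^\perp}(v)=0$: this is precisely what makes the Jacobi chart erase the dependence on the incoming face $F_i$ and replace the awkward $u$ by $J$. Everything else is a direct differentiation plus the explicit contracting-law formula of Proposition~\ref{prop contractive law}, so I expect no serious difficulty beyond keeping the sign of the flight-time coefficient straight.
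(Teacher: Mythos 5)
Your proof is correct and follows exactly the route the paper intends --- the paper's own proof is just the one-line citation of Propositions~\ref{prop:DPhi} and~\ref{prop contractive law}, and you have filled in precisely those computations (chain rule on $\chi_f\circ\Phi_P$, conjugation by the Jacobi charts with the cancellation $P_{v,\eta_j^\perp}(v)=0$, and the inner product with $\eta_j$ for the ratio). Your sign worry is well founded and you resolved it the right way: as literally printed, $\gamma(x)=\langle p-p_j,\eta_j\rangle/\langle v,\eta_j\rangle$ equals $-\tau(p,v)$, which is a sign typo in Proposition~\ref{prop:DPhi}; a direct differentiation of the flight condition, as you indicate, yields the coefficient $+\tau(p,v)$ appearing in the statement.
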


\begin{proof}
Immediate from Propositions \ref{prop:DPhi} and \ref{prop contractive law}.
\end{proof}

\subsection{Orbits, invariant sets and hyperbolicity}
\label{section hyperbolicity}
Denote by $M^+$ the subset of points in $M$ that can be iterated forward, i.e. 
$$
M^+=\{x\in M\colon \Phi_{f,P}^n(x)\in M'\;\forall\,n\geq 0\}.
$$
A \textit{billiard orbit} is a sequence $\{x_n\}_{n\geq0}$ in $M'$ such that $x_{n+1}=\Phi_{f,P}(x_n)$ for every $n\geq 0$. A \textit{billiard path or trajectory} is the polygonal path formed by segments of consecutive points of a billiard orbit. 

Define
$$
D:=\bigcap_{n\geq 0} \Phi_{f,P}^n(M^+).
$$
It is easy to see that $D$ is an invariant set and $\Phi_{f,P}$ and its inverse are defined on $D$. Following Pesin we call the closure of $D$ the \textit{attractor} of $\Phi_{f,P}$.
We say that $\Lambda\subset M$ is an \textit{invariant set} if $\Lambda\subset D$ and $\Phi_{f,P}^{-1}(\Lambda)=\Lambda$. 

To simplify the notation let us write $\Phi=\Phi_{f,P}$. 

\begin{defn}
Given an invariant set $\Lambda$ of $\Phi$, we say that $\Phi$ is  \textit{uniformly partially hyperbolic on $\Lambda$} if for every $x\in\Lambda$ there exists a continuous splitting 
$$
T_xM=E^s(x)\oplus E^{cu}(x),
$$
which is $D\Phi$-invariant,
$$
\quad D\Phi(E^s(x))=E^s(\Phi(x)),\quad D\Phi(E^{cu}(x))=E^{cu}(\Phi(x)),\quad\forall\,x\in\Lambda
$$
and there are constants $\lambda<1$, $\sigma\geq 1$ and $C>0$ such that for every $n\geq 1$ we have
 $$
 \| D\Phi^n|_{E^s}\|\leq C \lambda^n\quad\text{and}\quad \| D\Phi^{-n}|_{E^{cu}}\|\leq C \sigma^{-n}.
 $$
If $\sigma>1$, then we say that $\Phi$ is \textit{uniformly hyperbolic on $\Lambda$} and write $E^{u}$ for the subbundle $E^{cu}$. When $\Lambda=D$, then we simply say that $\Phi$ is \textit{uniformly partially hyperbolic}.
\end{defn}

We denote by
$$
\chi(x,v)=\limsup_{n\to\infty}\frac{1}{n}\log\|D\Phi^n(x)v\|
$$
the \textit{Lyapunov exponent} of a non-zero tangent vector $v\in T_x M$ at $x\in D$.

\begin{defn}
A $\Phi$-invariant Borel probability measure $\mu$ supported on $D$ is called \textit{hyperbolic} if $\mu$-almost every $x\in D$ satisfies $\chi(x,v)\neq 0$ for every non-zero $v\in T_xM$.
\end{defn}

The proof of the following result is an adaptation of \cite[Proposition 3.1]{MDDGP13}.

 \begin{proposition}\label{prop:partialhyperbolic}
For any polytope $P$ and any contracting reflexion law $f$, $\Phi_{f,P}$ is uniformly partially hyperbolic.
\end{proposition}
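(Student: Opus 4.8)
The plan is to read the structure of $D\Phi$ off Proposition~\ref{prop derivative in Jacobi coordinates} and build the splitting by hand. Writing $x'=\Phi(x)=(p',v')$ with $p'\in F_j'$, set
$$A_x := P_{v'^\perp}\circ P_{v,\eta_j^\perp}\big|_{v^\perp}\colon v^\perp\to v'^\perp, \qquad B_x := (DC_{\eta_j})_{R_{\eta_j}(v)}\circ R_{\eta_j}\big|_{v^\perp}\colon v^\perp\to v'^\perp,$$
so that in Jacobi coordinates $D\Phi(x)$ has the block upper–triangular form $(J,J')\mapsto (A_x(J+\tau\,J'),\,B_xJ')$, where $\tau=\tau(p,v)$ is the flight time. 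In particular the ``front'' subspace $E^{cu}(x):=\Psi_x^{-1}(v^\perp\times\{0\})$ is $D\Phi$–invariant with $D\Phi|_{E^{cu}(x)}$ represented by $A_x$, and the natural candidate for $E^s$ is a graph over the ``velocity'' factor $\{0\}\times v^\perp$.

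First I would record the two diagonal estimates. Since $R_{\eta_j}$ is orthogonal and, by Proposition~\ref{prop contractive law}(d), $\norm{DC_{\eta_j}}\le\lambda(f)$, we get $\norm{B_x}\le\lambda(f)<1$ uniformly: the velocity factor is strongly contracted. For $A_x$ I would decompose $v^\perp$ along the collision plane $W:=\linspan{v,\eta_j}$. On $v^\perp\cap W^\perp$ one has $\langle J,\eta_j\rangle=0$ and $J\perp v'$ (because $v'=C_{\eta_j}(R_{\eta_j}(v))\in W$), so $A_x$ is the identity there; on the line $v^\perp\cap W$ a direct computation gives expansion exactly by the factor $\cos f(\theta)/\cos\theta>1$ of Proposition~\ref{prop derivative in Jacobi coordinates}. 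Hence the singular values of $A_x$ are $1$ (with multiplicity $d-2$) and $\cos f(\theta)/\cos\theta$, so $A_x$ is invertible with $\minexp(A_x)=1$ and $\norm{A_x^{-1}}=1$. Consequently $\minexp(D\Phi^n|_{E^{cu}})\ge\prod_i\minexp(A_{x_i})\ge1$, i.e. $\norm{D\Phi^{-n}|_{E^{cu}}}\le1$, so $E^{cu}$ is never contracted and one may take $\sigma=1$.

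It remains to produce the invariant stable bundle. I would look for $E^s(x)=\{(P_xy,y):y\in v^\perp\}$, the graph of an endomorphism $P_x$ of $v^\perp$. Invariance $D\Phi(E^s(x))=E^s(x')$ translates, after matching the velocity coordinate $z=B_xy$, into $A_x(P_x+\tau I)=P_{x'}B_x$, that is $P_x=A_x^{-1}P_{x'}B_x-\tau I$. Reading this as a graph transform $(\Gamma P)_x:=A_x^{-1}P_{\Phi(x)}B_x-\tau_x I$ acting on bounded continuous sections $x\mapsto P_x\in\operatorname{End}(v^\perp)$ over $D$, the linear part has norm $\le\norm{A_x^{-1}}\,\norm{B_x}\le\lambda(f)<1$, while the inhomogeneous term is bounded by the diameter of $P$; thus $\Gamma$ is a uniform contraction in the sup–norm and its unique fixed point defines a continuous, $D\Phi$–invariant field $E^s$ with $\norm{P_x}\le(\operatorname{diam}P)/(1-\lambda(f))$, hence uniformly transverse to $E^{cu}$. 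Since $D\Phi$ acts on $E^s$ as $B_x$ in the velocity coordinate up to the bounded graph distortion, $\norm{D\Phi^n|_{E^s}}\le C\,\lambda(f)^n$, and $T_xM=E^s(x)\oplus E^{cu}(x)$ is the desired uniformly partially hyperbolic splitting.

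I expect the main obstacle to be the estimate $\minexp(A_x)\ge1$ for the front block: unlike the velocity block, $\norm{A_x}$ blows up at grazing collisions ($\theta\to\pi/2$), so the non–contraction must be extracted from the exact geometry of the two projections inside the collision plane rather than from a crude norm bound. This blow–up is, however, harmless for the construction of $E^s$, since the graph transform only uses $\norm{A_x^{-1}}\le1$ and $\norm{B_x}\le\lambda(f)$, both uniformly controlled; the only remaining care is the continuity of $x\mapsto(A_x,B_x,\tau)$ on $D$, where the orbit avoids the skeleton and $\Phi$ is smooth, whence the fixed point of $\Gamma$ depends continuously on the base point.
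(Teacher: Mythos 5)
Your proposal is correct and follows essentially the same route as the paper: Jacobi coordinates giving the block upper-triangular form, $E^{cu}$ as the horizontal factor with $\minexp(A_x)\geq 1$, and $E^s$ obtained as the invariant graph over the velocity factor via the backward graph transform, which the paper realizes as an explicitly convergent series (the iterated transform applied to the zero section) rather than as a Banach fixed point in the space of bounded sections — an equivalent formulation. Your singular-value computation for $A_x$ ($1$ with multiplicity $d-2$ and $\cos f(\theta)/\cos\theta$ on the collision plane) is the content the paper defers to Lemma~\ref{proj:sing:val}.
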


\begin{proof}
Given $x=(p,v), x'=(p',v')\in M$ such that $x'=\Phi(x)$ we denote by
$L(x,x')$ the map from $v^\perp \times v^\perp$ to 
 $v'^\perp \times v'^\perp$ that represents the derivative $D\Phi_x$ in the Jacobi coordinates (see Proposition~\ref{prop derivative in Jacobi coordinates}). This linear map is represented by a block upper triangular matrix of the form 
$$ L(x,x') = \begin{pmatrix}
 A(x,x') & B(x,x') \\ 0 & C(x,x') 
 \end{pmatrix}$$
where $\norm{A(x,x')^{-1}}\leq 1$ and 
$\norm{C(x,x')}\leq \lambda <1$, whose inverse is 
$$ L(x,x')^{-1} = \begin{pmatrix}
 A^{-1} & - A^{-1} B C^{-1} \\ 0 & C^{-1} 
 \end{pmatrix}$$
where $A=A(x,x')$, etc.
Given a linear map $H':v'^\perp \to v'^\perp$ the pre-image
of its graph by $L(x,x')$ is the graph of another linear function
$H:v^\perp \to v^\perp$ called the {\em backward graph transform} of $H'$
and denoted by $H=:\Gamma(x,x') H'$. The operator
$\Gamma(x,x')$ is hence defined by the relation
$$ L(x,x')^{-1}\graph (H')= \graph\left( \Gamma(x,x') H' \right).$$
A simple computation shows that
$$ \Gamma(x,x') H' = A(x,x')^{-1} B(x,x') - A(x,x')^{-1} H' C(x,x') .$$
We claim that writing $x_n=(p_n,v_n)=\Phi^n x$ and 
denoting by $Z_n$ the zero endomorphism  on $v_n^\perp$,
the following limit exists
$$ H^s(x) := \lim_{n\to +\infty}
\Gamma(x,\Phi x) \ldots \Gamma(\Phi^{n-1} x,\Phi^n x)  Z_n . $$
A recursive computation allows to explicit  the right hand side composition
$\Gamma(x,\Phi x) \ldots \Gamma(\Phi^{n-1} x,\Phi^n x)  Z_n$,
which is a partial sum of the following series
$$ H^s(x)  =  \sum_{j=0}^\infty
(-1)^j A_0^{-1}\cdots  A_j^{-1} B_j C_{j-1} \cdots C_0 $$ 
where
$A_j=A(\Phi^j x , \Phi^{j+1} x)$, etc.
This series converges because $\norm{A_j^{-1}}\leq 1$
and $\norm{C_j}\leq \lambda(f) < 1$ for all $j\geq 0$.

By construction, the subspaces $E^s(x):=\Psi_x^{-1}\graph(H^s(x))$
determine a $D\Phi$-invariant subbundle of $TM$  satisfying $\norm{D\Phi\vert_{E^s(x)}}\leq \lambda(f)$ for all $x\in D$.
Given $x=(p,v) \in D$, define $E^{cu}(x):=\Psi_x^{-1} \{(J,0)  \colon J\in v^\perp \} $. Clearly, $E^{cu}$ is invariant. Moreover, $\norm{ D\Phi^{-1}|_{E^{cu}(x)} }\leq   1$ for all $x\in D$. 

Finally, since $T_x M= E^s(x)\oplus E^{cu}(x)$ the previous facts show that $\Phi$ is uniformly partially hyperbolic.
\end{proof}

\subsection{Main results}

\begin{defn}
\label{def k-generating}
Given $k\in\Nn$, we say that $x\in M^+$ is \textit{$k$-generating} if the face normals along any orbit segment of length $k$ of the orbit of $x$ generate the Euclidean space $\Rr^d$. 
\end{defn}

\begin{defn} \label{def spanning}
Given $\varepsilon>0$,
 the polytope $P$ is called
\textit{$\varepsilon$-spanning} if for any $d$ distinct faces $F_{i_1},\ldots, F_{i_d}$ of $P$   with interior normals $\eta_{i_1},\ldots, \eta_{i_d}$, the angle between $\eta_{i_1}$ and $E:=\linspan{\eta_{i_2},\ldots, \eta_{i_d}}$ is at least $\varepsilon$, i.e.
$$\angle(\eta_{i_1},E)\geq \varepsilon.$$
We also say that $P$ is a \textit{spanning polytope} if it is $\varepsilon$-spanning for some $\varepsilon>0$.
\end{defn}

The following theorem is the main result of this paper. It shows that the contracting billiard map uniformly expands the unstable direction along the orbit of any $k$-generating point. Moreover, the expanding rate only depends on the polytope and contracting reflection law.

\begin{theorem}
\label{main theorem}
Suppose $P$ is a spanning polytope and $f$ a contracting reflexion law.  There exists $\sigma=\sigma(f,P)>1$, depending only on $f$ and  $P$, such that for every $k\geq d$ and every $k$-generating $x\in D$,
$$
\| D\Phi_{f,P}^{-2k}|_{E^u(x)}\|\leq 1/\sigma .
$$
\end{theorem}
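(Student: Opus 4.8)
The plan is to compute everything in the Jacobi coordinates of Proposition~\ref{prop derivative in Jacobi coordinates}. Write $x_0=\Phi^{-2k}(x)$ and $x_n=(p_n,v_n)=\Phi^n(x_0)$, and let $\eta_{n+1}$ be the inward normal of the face carrying $p_{n+1}$. By Proposition~\ref{prop:partialhyperbolic} the unstable bundle $E^u=E^{cu}$ is represented at $x_n$ by the hyperplane $v_n^\perp$, and on it $D\Phi$ acts through the upper-left block of Proposition~\ref{prop derivative in Jacobi coordinates}, namely $A_n:=P_{v_{n+1}^\perp}\circ P_{v_n,\eta_{n+1}^\perp}\colon v_n^\perp\to v_{n+1}^\perp$. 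Thus, in these coordinates, $\|D\Phi^{-2k}|_{E^u(x)}\|=1/\minexp(A_{2k-1}\cdots A_0)$, and the theorem reduces to the uniform lower bound $\minexp(A_{2k-1}\cdots A_0)\ge\sigma$ along the $k$-generating segment $x_0,\dots,x_{2k}$ (see Definition~\ref{def k-generating}).

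The first step is a single-step analysis of $A_n$. Since the reflection $R_{\eta_{n+1}}$ and the contracting map $C_{\eta_{n+1}}$ both preserve the plane $\linspan{v_n,\eta_{n+1}}$, the outgoing velocity $v_{n+1}$ lies in it; hence the \emph{velocity plane} $W_n:=\linspan{v_n,v_{n+1}}$ equals the \emph{incidence plane} $\linspan{v_n,\eta_{n+1}}$ (away from perpendicular incidence, where $v_{n+1}\parallel v_n$ and $A_n$ is neutral). A direct computation using the formulas for $P_{v_n,\eta_{n+1}^\perp}$ and $P_{v_{n+1}^\perp}$ shows that $A_n$ acts as the identity on $v_n^\perp\cap W_n^\perp=W_n^\perp$ and strictly expands the one-dimensional line $v_n^\perp\cap W_n$, with expansion factor bounded below by the ratio $\cos f(\theta)/\cos\theta>1$ of Proposition~\ref{prop derivative in Jacobi coordinates}, where $\theta$ is the incidence angle. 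Consequently the composition $A_n\cdots A_0$ expands exactly along the directions that have entered the growing \emph{velocity front} $V_n:=\linspan{v_0,\dots,v_n}$, and the whole problem becomes showing that, for a $k$-generating orbit, these fronts fill enough of $\Rr^d$ within $2k$ steps to produce expansion on all of $v_0^\perp$.

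The crux is that spanning of the normal fronts does not transfer verbatim to the velocity fronts. Because $v_{n+1}\in\linspan{v_n,\eta_{n+1}}$, one has $V_{n+1}\subseteq V_n+\linspan{\eta_{n+1}}$, so the velocity front can only grow by the new normal direction; but it may \emph{fail} to grow when the velocity front has momentarily aligned with the normal front---a \emph{collinearity}. To handle this I would introduce quantitative $\delta$-collinearities and select a \emph{minimal} one, at some time $t$, along the segment. The spanning hypothesis (Definition~\ref{def spanning}) bounds below the angle between each newly visited normal and the span of the previous ones; transporting this through the equality of velocity and incidence planes, and through Lemma~\ref{angle:lemma}, forces each post-collinearity velocity to make a definite angle with the current velocity front, so that after time $t$ every new face genuinely enlarges the front with uniform transversality. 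Lemmas~\ref{anglo prod exterior} and \ref{anglo cumulativo} then convert these angle bounds into a uniform lower bound on the accumulated expansion transversal to $v_0^\perp\cap V_t$, while minimality of the collinearity yields the complementary expansion along $v_0^\perp\cap V_t$ up to time $t$. Splitting the segment at $t$ is exactly what forces the length $2k$ rather than $k$: when a minimal collinearity occurs at some $t<k$, the $k$-generating property applied to the window starting at $t$ fills the front by time $t+k<2k$; and when no collinearity occurs in the first $k$ steps, the fronts grow freely and $k$-generating fills $\Rr^d$ already by step $k$. Either way $A_{2k-1}\cdots A_0$ expands all of $v_0^\perp$.

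The remaining point is uniformity: to produce a single constant $\sigma=\sigma(f,P)>1$ independent of the orbit and of $k\ge d$, I would transfer the whole discussion to the compact symbolic \emph{trajectory space} that retains only the sequences of velocities and normals, where the $\delta$-collinearity condition and all the angles above vary continuously and the non-compactness of $M$ (the removed grazing orbits) is harmless. Combining the uniform per-step factor $\cos f(\theta)/\cos\theta$ controlled by $\lambda(f)$ with the $\varepsilon$-dependent transversality bounds coming from Definition~\ref{def spanning}, a compactness argument then yields the uniform $\sigma$. The \textbf{main obstacle} is precisely the collinearity analysis of the third paragraph: establishing that after a minimal collinearity every newly visited face contributes a velocity uniformly transverse to the current velocity front, so that the single expanding direction produced at each step accumulates into expansion in \emph{every} direction of $v_0^\perp$ within $2k$ iterates, instead of stalling.
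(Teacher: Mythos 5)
Your proposal follows essentially the same route as the paper: the reduction in Jacobi coordinates to the velocity tangent flow $L_{[0,2k]}$, the single-step singular-value analysis of $P_{v'^\perp}\circ P_{v,\eta^\perp}$ (Lemma~\ref{proj:sing:val}), the dichotomy between intervals with no $\delta$-collinearity and intervals containing a minimal one (which is exactly the paper's case split in the proof of Theorem~\ref{hyperb:main} and the reason for the window length $2k$), the use of the $\varepsilon$-spanning hypothesis to force uniform transversality of post-collinearity velocities, and the compactness of the symbolic trajectory space to extract a uniform $\sigma$. You have correctly identified the key obstacle (the collinearity analysis) and the supporting lemmas; no genuinely different idea or gap to report.
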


We prove this theorem and the following results in section~\ref{sec:proof}.

\begin{defn}\label{def escaping}
Given $x\in M^+$, the \textit{escaping time of $x$}, denoted by $T(x)$, is the least positive integer $k\in\Nn$ such that $x$ is $k$-generating. If $x$ is not $k$-generating for any $k\in \Nn$, then we set $T(x)=\infty$. We also call the function $T:M^+\to\Nn\cup\{\infty\}$ the \textit{escaping time of $P$ with respect to $f$}.
\end{defn}

\begin{theorem}
\label{coro NUH}
Suppose $P$ is a spanning polytope and $\mu$ is an ergodic $\Phi_{f,P}$-invariant Borel probability measure. If $T$ is $\mu$-integrable, then $\mu$ is hyperbolic.
\end{theorem}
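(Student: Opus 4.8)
The plan is to use the uniform partial hyperbolicity supplied by Proposition~\ref{prop:partialhyperbolic} to dispose of the stable directions, and then to upgrade the merely neutral behaviour on $E^{cu}$ to genuine expansion by feeding Theorem~\ref{main theorem} into a superadditive ergodic theorem. By Proposition~\ref{prop:partialhyperbolic} there is a $D\Phi$-invariant splitting $T_xM=E^s(x)\oplus E^{cu}(x)$ with $\norm{D\Phi|_{E^s(x)}}\leq\lambda(f)<1$ and $\norm{D\Phi^{-1}|_{E^{cu}(x)}}\leq 1$. The first bound already gives $\chi(x,v)\leq\log\lambda(f)<0$ for every nonzero $v\in E^s(x)$, so all stable exponents are negative. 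The second bound gives $\minexp(D\Phi|_{E^{cu}(x)})\geq 1$, so the central-unstable exponents are nonnegative; the entire point is to show they are strictly positive $\mu$-almost everywhere.

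First I would record Theorem~\ref{main theorem} in forward form. Writing $y=\Phi^{-2k}(x)$, the bound $\norm{D\Phi^{-2k}|_{E^u(x)}}\leq 1/\sigma$ valid for a $k$-generating $x$ is exactly $\minexp(D\Phi^{2k}|_{E^{cu}(y)})\geq\sigma$ whenever $\Phi^{2k}(y)$ is $k$-generating (recall $E^u=E^{cu}$). Now set
$$h_n(x):=\log\minexp\left(D\Phi^{n}|_{E^{cu}(x)}\right).$$
Since $D\Phi$ preserves $E^{cu}$ and $\minexp(BA)\geq\minexp(B)\,\minexp(A)$, the sequence $(h_n)$ is superadditive, $h_{n+m}\geq h_n+h_m\circ\Phi^{n}$, and $h_n\geq 0$ by the previous paragraph. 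The reformulated estimate says $h_{2k}\geq\log\sigma$ on $\Phi^{-2k}(\{T\leq k\})$ while $h_{2k}\geq 0$ everywhere, so by $\Phi$-invariance of $\mu$,
$$\int h_{2k}\,d\mu\;\geq\;\log\sigma\cdot\mu\bigl(\Phi^{-2k}(\{T\leq k\})\bigr)\;=\;\log\sigma\cdot\mu(\{T\leq k\}).$$

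Next I would invoke Kingman's ergodic theorem in its superadditive form. As $h_1\geq 0$, its negative part is trivially integrable, so $\tfrac1n h_n$ converges $\mu$-almost everywhere to a constant $\psi=\sup_{n}\tfrac1n\int h_n\,d\mu$ (using ergodicity). Combined with the previous display,
$$\psi\;\geq\;\frac{1}{2k}\int h_{2k}\,d\mu\;\geq\;\frac{\log\sigma}{2k}\,\mu(\{T\leq k\}).$$
Since $T$ is $\mu$-integrable it is finite $\mu$-a.e., hence $\mu(\{T\leq k\})\to 1$ and in particular is positive for all large $k$; fixing such a $k$ forces $\psi>0$. Because $\psi=\lim_n\tfrac1n\log\minexp(D\Phi^n|_{E^{cu}})$, every nonzero $v\in E^{cu}(x)$ satisfies $\chi(x,v)\geq\psi>0$. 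For a general nonzero $v=v^s+v^{cu}$ with $v^{cu}\neq 0$, the estimate $\norm{D\Phi^n v}\geq\minexp(D\Phi^n|_{E^{cu}})\,\norm{v^{cu}}-\lambda(f)^n\norm{v^s}$ shows the central-unstable part dominates, giving again $\chi(x,v)\geq\psi>0$; and if $v^{cu}=0$ then $v\in E^s$ and $\chi(x,v)<0$. Thus $\chi(x,v)\neq 0$ for every nonzero $v$ at $\mu$-a.e.\ $x$, i.e.\ $\mu$ is hyperbolic.

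The delicate point, and the place where $\mu$-integrability of $T$ appears to be needed beyond mere finiteness, is in guaranteeing that the exponents are honest finite numbers, equivalently that $\log^{+}\norm{D\Phi^{\pm 1}}\in L^1(\mu)$ so that the multiplicative (and superadditive) ergodic theorems apply cleanly. This is precisely where the factors $\cos f(\theta)/\cos\theta$ and $\langle v,\eta_j\rangle^{-1}$ appearing in the Jacobi-coordinate derivative of Proposition~\ref{prop derivative in Jacobi coordinates} blow up near the skeleton. I expect the main work to be a quantitative bound showing that the logarithmic derivative growth accumulated along an orbit segment is controlled by the escaping time, so that $T\in L^1(\mu)$ forces $\log^{+}\norm{D\Phi^{\pm1}}\in L^1(\mu)$; the rest of the argument is then the soft ergodic-theoretic bookkeeping sketched above.
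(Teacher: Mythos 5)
Your proof is correct, but it reaches the positivity of the exponents on $E^{cu}$ by a genuinely different mechanism than the paper. The paper works backwards along the orbit: it introduces the pulled-back escaping time $\tilde T$ with $T(\Phi^{-\tilde T(x)}(x))=\tilde T(x)$, cuts a long backward orbit segment into blocks of variable length $2\tilde T(x_j)$ on each of which Theorem~\ref{main theorem} gives contraction by $1/\sigma$, and then controls the number of blocks per unit time through a Birkhoff-type average of $\tilde T$ along the induced (jump) transformation --- this is exactly where $\int T\,d\mu<\infty$ enters, and it produces the quantitative lower bound $\log\sigma/\int \tilde T\,d\mu$ for the exponent; Kingman is invoked only at the end to upgrade the $\limsup$ to a limit. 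You instead run Kingman in superadditive form on $h_n=\log\minexp\bigl(D\Phi^n|_{E^{cu}}\bigr)$ from the outset, using $h_n\geq 0$ (partial hyperbolicity) and the single inequality $\int h_{2k}\,d\mu\geq \log\sigma\cdot\mu(\{T\leq k\})$. This buys two things: it avoids the variable-length block decomposition, whose Birkhoff step (applied to a jump transformation rather than to $\Phi$ itself) is in fact the most delicate point of the paper's argument; and it shows that integrability of $T$ is only used through its consequence $T<\infty$ $\mu$-a.e., i.e.\ $\mu(\{T\leq k\})>0$ for some $k\geq d$ --- so your argument proves a slightly stronger statement. The price is that you lose the explicit lower bound on the exponent in terms of $\int T\,d\mu$. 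Two minor remarks: the forward form that the proof of Theorem~\ref{main theorem} actually establishes is $\minexp(D\Phi^{2k}|_{E^{cu}(y)})\geq\sigma$ for $y$ itself $k$-generating, but by $\Phi$-invariance of $\mu$ this does not affect your integral estimate; and the concern in your final paragraph is not needed, since the paper's notion of hyperbolic measure only requires the $\limsup$ exponents $\chi(x,v)$ to be nonzero, so no $\log^+$-integrability of $\|D\Phi^{\pm 1}\|$ has to be verified and your argument already closes without it.
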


\begin{theorem}
\label{coro UH}
Suppose $P$ is a spanning polytope and $\Lambda$ an invariant set of $\Phi_{f,P}$. If $T$ is bounded on $\Lambda$, then $\Phi_{f,P}$ is uniformly hyperbolic on $\Lambda$.
\end{theorem}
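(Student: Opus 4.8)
The plan is to obtain Theorem~\ref{coro UH} as a consequence of the uniform block estimate in Theorem~\ref{main theorem} together with the partial hyperbolicity from Proposition~\ref{prop:partialhyperbolic}; essentially all of the substance is already contained in those two results, and what remains is a bookkeeping and telescoping argument. First I would record two elementary facts about the escaping time. On one hand, an orbit segment of length $k<d$ yields fewer than $d$ face normals, which cannot span $\Rr^d$; hence $T(x)\geq d$ for every $x$. On the other hand, being $k$-generating is monotone in $k$, since every orbit segment of length $k'\geq k$ contains one of length $k$ whose normals already span $\Rr^d$. Consequently, if $T$ is bounded on $\Lambda$ then $K:=\sup_{x\in\Lambda}T(x)$ is a finite integer with $K\geq d$. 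Because $\Lambda$ is invariant, every point of the bi-infinite orbit of a given $x\in\Lambda$ again lies in $\Lambda$ and so has escaping time at most $K$; it follows that every orbit segment of length $K$ of $x$ spans $\Rr^d$, i.e. each $x\in\Lambda$ is $K$-generating.

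Since $\Lambda\subset D$ and $K\geq d$, Theorem~\ref{main theorem} applies to every $x\in\Lambda$ and furnishes a constant $\sigma=\sigma(f,P)>1$, independent of $x$, with
$$ \norm{D\Phi^{-2K}|_{E^u(x)}}\leq \frac{1}{\sigma}, \qquad x\in\Lambda, $$
where I write $E^u=E^{cu}$ for the centre-unstable subbundle of the splitting $T_xM=E^s(x)\oplus E^{cu}(x)$ of Proposition~\ref{prop:partialhyperbolic}. The only remaining task is to promote this uniform contraction of the single block $D\Phi^{-2K}$ on $E^u$ into an exponential bound on $D\Phi^{-n}$ valid for all $n\geq 1$.

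For that step I would fix $x\in\Lambda$ and $n\geq 1$, write $n=2Km+r$ with $0\leq r<2K$, and use the $D\Phi$-invariance of $E^u$ to factor
$$ D\Phi^{-n}|_{E^u(x)} = \bigl(D\Phi^{-r}|_{E^u(\Phi^{-2Km}x)}\bigr)\circ\prod_{i=0}^{m-1} D\Phi^{-2K}|_{E^u(\Phi^{-2Ki}x)}. $$
By invariance of $\Lambda$ each of the $m$ base points $\Phi^{-2Ki}x$ lies in $\Lambda$, so each block has norm at most $1/\sigma$, while the residual factor obeys $\norm{D\Phi^{-r}|_{E^u}}\leq 1$ thanks to the estimate $\norm{D\Phi^{-1}|_{E^{cu}}}\leq 1$ of Proposition~\ref{prop:partialhyperbolic}. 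Hence
$$ \norm{D\Phi^{-n}|_{E^u(x)}}\leq \sigma^{-m}\leq \sigma\,\bigl(\sigma^{1/(2K)}\bigr)^{-n}, $$
which is exactly the required bound $\norm{D\Phi^{-n}|_{E^u}}\leq C\,\widetilde\sigma^{-n}$ with $C:=\sigma$ and $\widetilde\sigma:=\sigma^{1/(2K)}>1$. The companion contraction $\norm{D\Phi^{n}|_{E^s}}\leq\lambda(f)^n$ and the continuity and invariance of the splitting are supplied verbatim by Proposition~\ref{prop:partialhyperbolic}, so $\Phi$ is uniformly hyperbolic on $\Lambda$. I expect no genuine obstacle here: the hyperbolic mechanism is entirely hidden inside Theorem~\ref{main theorem}, and the points demanding care are merely confirming $K\geq d$ so that the hypothesis $k\geq d$ of that theorem is met, and tracking the invariance so that every intermediate base point indeed lies in $\Lambda$ where the uniform bound holds.
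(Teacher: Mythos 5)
Your proof is correct and takes essentially the same route as the paper: invoke Proposition~\ref{prop:partialhyperbolic} for the splitting and the stable contraction, apply Theorem~\ref{main theorem} with $k=\sup_{x\in\Lambda}T(x)$ (legitimate since every point of the invariant set is $k$-generating), and telescope the block estimate into $\norm{D\Phi^{-n}|_{E^{cu}}}\leq C\sigma^{-n/(2k)}$ — the paper simply states this last step without writing out the division $n=2km+r$. The only quibble is your claim that $T(x)\geq d$ outright (a segment of length $k$ in the paper's convention carries $k+1$ normals, so a priori one only gets $T(x)\geq d-1$), but your own monotonicity observation repairs this instantly by applying Theorem~\ref{main theorem} with $k=\max(K,d)$.
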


 The concept of polytope in general position, mentioned in the following corollaries,  is defined below  (see definition~\ref{def generic polytopes}).

\begin{corollary}\label{coro 1}
Suppose $P$ is a polytope in general position. There exists $\lambda_0=\lambda_0(P)>0$ such that for every contracting reflection law $f$ satisfying $\lambda(f)>\lambda_0$ the billiard map $\Phi_{f,P}$ is uniformly hyperbolic. 
\end{corollary}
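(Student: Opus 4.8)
The plan is to deduce Corollary~\ref{coro 1} from Theorem~\ref{coro UH} applied to the whole attractor $\Lambda=D$, by checking its two hypotheses for a polytope $P$ in general position and a reflection law with $\lambda(f)>\lambda_0(P)$: that $P$ is spanning, and that the escaping time $T$ is uniformly bounded on $D$. The spanning hypothesis is immediate. By Definition~\ref{def generic polytopes} general position forces any $d$ facet normals to be linearly independent, and since $P$ has finitely many facets the minimum over all $d$-tuples of the angle $\angle(\eta_{i_1},\linspan{\eta_{i_2},\dots,\eta_{i_d}})$ is attained and positive; calling it $\varepsilon(P)$, the polytope is $\varepsilon(P)$-spanning in the sense of Definition~\ref{def spanning}. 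Everything therefore reduces to the escaping-time estimate, which is the content I would place in section~\ref{sec:escape}.

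Since $P$ is spanning, any $d$ distinct facets already generate $\Rr^d$, so by Definitions~\ref{def k-generating} and~\ref{def escaping} the escaping time $T(x)$ is exactly the length of the longest run of consecutive reflections along the orbit of $x$ that visits fewer than $d$ distinct facets. During such a run the orbit bounces among a fixed set $S$ of facets with $|S|<d$, hence is confined to the polyhedral cone they bound, so I would localise the whole question to the cones of $P$ along its skeleton. The model case is a single ridge, where two facets with interior normals $\eta_1,\eta_2$ bound a planar wedge of dihedral angle $\beta\ge\beta_{\min}(P)>0$; projecting onto the plane orthogonal to the ridge (the tangential factor $\sin f(\theta)/\sin\theta<1$ of Proposition~\ref{prop contractive law} drives the velocity component along the ridge to zero) reduces the escape question to one dimension. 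A direct computation in these coordinates shows that while the orbit keeps bouncing between the two facets the successive angles of incidence obey
$$
\theta_{n+1}=\beta-f(\theta_n)=:G(\theta_n),
$$
a map with $|G'|=|f'|\le\lambda(f)$ and a unique fixed point $\theta^\ast$ determined by $\theta^\ast+f(\theta^\ast)=\beta$.

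The near-specular hypothesis enters precisely through the dynamics of $G$. When $\lambda(f)$ is close to $1$ one has $G(\theta)\approx\beta-\theta$, i.e.\ $G$ is a near-involution with multiplier near $-1$; the incidence angles then essentially oscillate, just as for the specular billiard, for which the classical unfolding argument shows that every trajectory leaves the wedge after at most $\lceil\pi/\beta\rceil$ reflections, uniformly in the initial point. Leaving the cone with a definite margin is an open condition on a finite orbit segment, so a compactness argument on the trajectory space upgrades this to a uniform bound on the length of any run confined to a fixed set $S$ of facets, valid for all $\lambda(f)>\lambda_0(P)$; as there are finitely many such sets this yields $T\le K(P)$ on $D$. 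The hypothesis cannot be dropped: for strongly contracting $f$ the map $G$ contracts sharply onto $\theta^\ast\approx\beta$, and at an \emph{acute} ridge a trajectory then locks into bouncing between the same two facets while drifting away from the ridge only geometrically, so an orbit passing at distance $x_0$ from the ridge needs $\asymp\log(1/x_0)\to\infty$ reflections to meet a third facet and $T$ is unbounded.

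The step I expect to be the main obstacle is making the escape estimate uniform all the way down to the skeleton and across the higher-codimension cones. Grazing orbits approach a ridge arbitrarily closely, and although the specular bound $\lceil\pi/\beta\rceil$ is uniform in the incidence angle, the contracting correction must be controlled simultaneously over all $\theta\in[0,\pi/2)$ and over every face of the skeleton, the deepest case being the vertex cones where $d$ facets meet and the one-dimensional reduction must be replaced by an analysis of the full cone. The clean way to organise this is to carry out the argument on the compact trajectory space, where $T$ is upper semicontinuous in $f$, and to extract the constant $K(P)$ and the threshold $\lambda_0(P)$ by compactness. Once $T\le K(P)$ is established on $D$, Theorem~\ref{coro UH} gives that $\Phi_{f,P}$ is uniformly hyperbolic, which proves the corollary.
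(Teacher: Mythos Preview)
Your overall plan---general position $\Rightarrow$ spanning, then bound the escaping time $T$ on $D$ and invoke Theorem~\ref{coro UH}---is exactly the paper's, and your reduction of the escaping-time problem to polyhedral cones is also correct. The gap is in the cone analysis itself. Your argument is worked out only for a two-face wedge, where the one-dimensional recursion $\theta_{n+1}=\beta-f(\theta_n)$ is available; for a cone on $s\ge 3$ facets there is no such reduction, and you yourself flag the vertex-cone case as ``the main obstacle'' without resolving it. The compactness/continuity argument you propose, perturbing from the specular billiard, is also not straightforward: the specular law has $\lambda(f)=1$ and lies outside the class of contracting reflection laws, so one cannot simply take a limit within the trajectory space $\mathscr{T}_{f,P}$. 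You would in effect need a version of Sinai's bound on the number of reflections in a cone that is stable under $C^0$ perturbations of the reflection map, and proving that is essentially the whole question.

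The paper bypasses both issues with a different, dimension-free mechanism (Section~\ref{sec:escape}). For a cone $Q$ with spanning normals $\eta_1,\dots,\eta_s$ one introduces the barycentric direction $e$ and barycentric angle $\phi$ determined by $\langle\eta_i,e\rangle=\sin\phi$ for all $i$ (Definition~\ref{def barycentric angle}). Lemma~\ref{lem:vk} gives $\langle v_{k+1}-v_k,e\rangle=\|v_{k+1}-v_k\|\,\gamma_k$ with an explicit $\gamma_k$, and when $2\phi>\pi/2-f(\pi/2)$ one has $\gamma_k\ge\sin\mu>0$ uniformly. Summing the telescoping identity $\sum_k\langle v_{k+1}-v_k,e\rangle=\langle v_n-v_0,e\rangle\le 2$ then bounds the zigzag length $\sum_k\|v_{k+1}-v_k\|$, and since each step has length at least $2\cos\!\bigl(\tfrac{f(\pi/2)+\pi/2}{2}\bigr)>0$ (Lemma~\ref{lem:vkdiff}) this bounds the number of reflections (Theorem~\ref{th:zigzag} via Lemma~\ref{prop:escapingzigzag}). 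Since $P$ in general position has finitely many such cones, each with $\phi>0$, any contracting law with $f(\pi/2)$ close enough to $\pi/2$ satisfies the inequality at every cone simultaneously; this is Corollary~\ref{cor escaping 1}, and it delivers the uniform bound on $T$ directly, with no perturbation from the specular case and no separate treatment of ridges versus higher-codimension corners.
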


A polytope $P$ in general position is called \textit{obtuse} if the barycentric angle at every vertex of $P$ is greater than $\pi/4$ (see section~\ref{sec:escape} for a precise definition).

\begin{corollary}\label{coro 2}
Suppose $P$ is a polytope in general position and $f$ any contracting reflection law. If $P$ is obtuse, the $\Phi_{f,P}$ is uniformly hyperbolic.
\end{corollary}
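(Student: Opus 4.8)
The plan is to obtain Corollary~\ref{coro 2} as an application of Theorem~\ref{coro UH} to the maximal invariant set $\Lambda=D$. Two inputs are required: that a polytope in general position is spanning, so that the standing hypothesis of Theorem~\ref{coro UH} is met, and that obtuseness forces the escaping time $T$ to be uniformly bounded. Granting these, since $D$ is an invariant set (it is $\Phi_{f,P}$-invariant with $\Phi_{f,P}^{-1}(D)=D$) and $T$ is bounded on $D$, Theorem~\ref{coro UH} yields that $\Phi_{f,P}|_{D}$ is uniformly hyperbolic, which by the convention following the definition of hyperbolicity means precisely that $\Phi_{f,P}$ is uniformly hyperbolic.

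The first input is the content of Section~\ref{sec:regular polygons}, where polytopes in general position are shown to be generic in the sense of Definition~\ref{def generic polytopes}. Genericity bundles together the two linear-algebraic conditions we need: every $d$ of the face normals span $\Rr^d$, so that $P$ is $\varepsilon$-spanning for some $\varepsilon>0$ (Definition~\ref{def spanning}), and the normals of the $(d-1)$-faces meeting at any vertex are linearly independent (so, there being $d$ of them, they already span $\Rr^d$). The first of these is exactly the hypothesis of Theorem~\ref{coro UH}; the second will feed into the escaping-time estimate below.

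The crux is the second input, and this is where I expect the real work. Since $k$-generating is monotone in $k$, boundedness of $T$ on $D$ by some $T_0$ is equivalent to the statement that no orbit in $D$ admits a window of $T_0$ consecutive reflections whose face normals fail to span $\Rr^d$; the orbits violating this for every $T_0$ are exactly those that stay arbitrarily close to the skeleton $\Sigma P$, the trapping behaviour illustrated by the prism example of the introduction. To control such orbits I would localize near the skeleton: in a neighbourhood of a vertex the polytope agrees to first order with the polyhedral cone generated by the incident faces, and a near-skeleton orbit must approximately shadow a contracting billiard trajectory inside that cone. The decisive estimate, to be proved in Section~\ref{sec:escape}, is that for an obtuse cone—one whose barycentric angle exceeds $\pi/4$—the number of reflections before the normals of the visited faces span $\Rr^d$ is bounded uniformly in the initial condition and in the contracting law $f$; here general position enters through the fact that the incident normals are linearly independent and therefore span. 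Running this estimate over the finitely many vertices and combining it with a compactness argument then produces a uniform bound $T\le T_0$ on $M^+\supseteq D$. The delicate part is the cone estimate itself: one must show that the opening of an obtuse cone drives a trajectory away from the apex in boundedly many steps regardless of how strongly $f$ contracts the reflection angle toward the normal, thereby precluding indefinite trapping among a non-spanning set of faces.
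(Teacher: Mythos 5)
Your proposal is correct and follows essentially the same route as the paper: general position gives the spanning hypothesis, obtuseness gives a uniformly bounded escaping time via the polyhedral-cone analysis of Section~\ref{sec:escape} (the zigzag-length estimate of Theorem~\ref{th:zigzag}, specialized in Corollary~\ref{cor escaping 2}, where $\phi>\pi/4$ makes the bound independent of $f$), and Theorem~\ref{coro UH} applied to $\Lambda=D$ concludes. The cone estimate you defer to is exactly the one the paper supplies, so no new argument is needed.
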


\section{Generic Polytopes}
\label{sec:regular polygons}

\begin{defn}
\label{def generic polytopes}
A $d$-dimensional polytope  $P$ is said to be in general position if 
\begin{enumerate}
\item for any set of $d$ faces of $P$, $(d-1)$-dimensional faces,  their normals are linearly independent,
\item the normals to the  $(d-1)$-faces  of $P$ incident with any given vertex are linearly independent.
\end{enumerate}
\end{defn}

\begin{proposition}
Given some $d$-dimensional polytope  $P\subset\Rr^d$  in general position,
each vertex has exactly $d$ faces and $d$ edges incident with it.
\end{proposition}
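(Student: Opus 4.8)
The plan is to fix a vertex $v$ of $P$, let $I=\{\,i : v\in F_i\,\}$ index the $(d-1)$-faces incident with $v$, write $m=|I|$, and prove $m=d$; the count of edges will then follow from the simplicial structure of the tangent cone at $v$. The upper bound $m\le d$ is immediate from part~(2) of Definition~\ref{def generic polytopes}: the normals $\{\eta_i\}_{i\in I}$ are linearly independent vectors of $\Rr^d$, so there can be at most $d$ of them.

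For the lower bound I would pass to the local structure of $P$ at $v$. Near $v$ the only facet inequalities that are active are those indexed by $I$, while every other defining half-space contains $v$ in its interior; hence in a small ball around $v$ the polytope agrees with the translate $v+C_v$ of the tangent cone
$$
C_v=\{\,u\in\Rr^d : \langle u,\eta_i\rangle\ge 0 \text{ for all } i\in I\,\}.
$$
Since $P$ is $d$-dimensional, $C_v$ is a full-dimensional cone, and since $v$ is a $0$-dimensional face of $P$ the cone $C_v$ must be \emph{pointed}, i.e.\ it contains no line through the origin. The largest linear subspace contained in $C_v$ is $\bigcap_{i\in I}\eta_i^\perp=(\spn\{\eta_i\}_{i\in I})^\perp$, so pointedness is equivalent to $\spn\{\eta_i\}_{i\in I}=\Rr^d$, which forces $m\ge d$. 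Combined with the upper bound this gives $m=d$, and in particular $\{\eta_i\}_{i\in I}$ is a basis of $\Rr^d$.

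To count the edges I would use that $\{\eta_i\}_{i\in I}$ is a basis and introduce its dual basis $\{w_i\}_{i\in I}$, characterized by $\langle w_i,\eta_j\rangle=\delta_{ij}$. Then
$$
C_v=\Big\{\,\sum_{i\in I} t_i\,w_i : t_i\ge 0\,\Big\}
$$
is a simplicial cone whose extreme rays are exactly the $d$ rays $\Rr_{\ge 0}\,w_i$. The edges of $P$ through $v$ correspond precisely to these extreme rays: for each $i\in I$ and small $t\ge 0$ the point $v+t\,w_i$ satisfies $\langle v+t\,w_i,\eta_j\rangle=\langle v,\eta_j\rangle$ for all $j\ne i$, so it lies on the faces $F_j$ with $j\in I\setminus\{i\}$ and strictly inside the remaining half-spaces, hence traces out an edge of $P$ emanating from $v$. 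As the $w_i$ are linearly independent these $d$ edges are distinct, and every edge through $v$ arises this way, so $v$ has exactly $d$ incident edges.

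The main obstacle is the lower bound $m\ge d$: one must justify that the tangent cone of $P$ at $v$ is cut out precisely by the facets incident with $v$, and that $v$ being a $0$-dimensional face is equivalent to the pointedness of that cone. This is the only step where genuine polytope geometry enters, as opposed to the purely linear-algebraic content of Definition~\ref{def generic polytopes}; both facts are standard, but they are what convert ``$v$ is a vertex'' into the spanning statement $\spn\{\eta_i\}_{i\in I}=\Rr^d$. The identification of edges with the extreme rays of a simplicial cone is then routine.
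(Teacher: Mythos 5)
Your proof is correct, and it is essentially a careful expansion of the paper's own one-line proof, which simply asserts that the statement ``follows from condition (2) of Definition~\ref{def generic polytopes}.'' You supply exactly the standard polytope-geometry facts the paper leaves implicit: the upper bound $m\le d$ from linear independence of the incident normals, the lower bound $m\ge d$ from pointedness of the tangent cone at a vertex, and the edge count from the simplicial structure of that cone.
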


\begin{proof}
Follows from condition (2) of the Definition~\ref{def generic polytopes}.
\end{proof}

Consider the class $\mathcal{P}_{N}$ of $d$-dimensional
polyhedra $P\subset\Rr^d$   that contain the origin, i.e., $0\in {\rm int} (P)$,
with exactly $N$ faces.
Given $N$ points $(p_1,\ldots, p_N)\in (\Rr^d\setminus \{0\})^N$, define
the polytope $Q(p_1,\ldots, p_N)\subset\Rr^d$,
$$ Q(p_1,\ldots, p_N) :=\cap_{j=1}^N \{\, x\in\Rr^b\,\colon\,
\langle x, p_j\rangle \leq \langle p_j, p_j\rangle\,\}\;. $$
The set
$$ \mathcal{U}:=\{\, (p_1,\ldots, p_N)\in (\Rr^d\setminus \{0\})^N\,\colon\,
Q(p_1,\ldots, p_N)\; \text{ has exactly }\, N\text{-faces}\;\} $$
is open in $(\Rr^d\setminus \{0\})^N$, and the range of $Q:\mathcal{U}\to \cdot$
coincides with $\mathcal{P}_{N}$.
Locally the map $Q:\mathcal{U}\to \mathcal{P}_{N}$ is one-to-one, and determines
an atlas for a smooth structure on $\mathcal{P}_{N}$.
We will consider on this manifold the Lebesgue measure obtained as 
push-forward of the Lebesgue measure  on $(\Rr^d\setminus \{0\})^N$ by the map $Q$.

Let $\mathscr{P}_{N}$ denote the subset of   polytopes in $\mathcal{P}_{N}$.

In Algebraic Geometry, the following result is a standard consequence of the notion of `general position'.
We include its proof here for the reader's convenience, also because we could not find any reference for this precise statement.

\begin{proposition}
The subset of  polytopes in general position is  
is open and dense, and has full Lebesgue measure in $\mathscr{P}_{N}$.
\end{proposition}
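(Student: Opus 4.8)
The plan is to prove three assertions about the subset $\Gscr\subseteq\mathscr{P}_N$ of polytopes in general position: that it is \emph{open}, that its complement has \emph{zero Lebesgue measure} (hence $\Gscr$ is \emph{dense} and of full measure), all via the parametrization $Q:\mathcal{U}\to\mathscr{P}_N$. The key observation is that under $Q$, the interior normal to the face lying on the hyperplane $\{\langle x,p_j\rangle=\langle p_j,p_j\rangle\}$ is simply $\eta_j=-p_j/\norm{p_j}$, so that conditions (1) and (2) of Definition~\ref{def generic polytopes} translate into statements about linear independence of certain subsets of the vectors $p_1,\ldots,p_N$ — equivalently, non-vanishing of certain $d\times d$ determinants built from the $p_i$.

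First I would establish openness. Both defining conditions are \emph{open conditions} on the configuration $(p_1,\ldots,p_N)\in\mathcal{U}$: condition (1) requires that for every $d$-element index set $I$, the determinant $\det(p_{i_1},\ldots,p_{i_d})\neq 0$, and since finitely many determinant functions are continuous, their simultaneous non-vanishing is open. Condition (2) is subtler because the combinatorial type (which faces are incident to which vertex) can in principle jump; however, on the open set where (1) holds, the face lattice is locally constant, so the incidence relation between vertices and faces is locally constant, and condition (2) again reduces to non-vanishing of finitely many determinants, giving an open condition. Intersecting with the open set $\mathcal{U}$ and pushing forward through the local homeomorphism $Q$ yields that $\Gscr$ is open in $\mathscr{P}_N$.

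Next, for full measure (which gives density as a corollary), I would show the complement is contained in a finite union of proper algebraic subvarieties of $(\Rr^d\setminus\{0\})^N$, each of which is Zariski-closed and \emph{proper}, hence Lebesgue-null. Concretely, each failure of condition (1) is the vanishing locus of a polynomial $\det(p_{i_1},\ldots,p_{i_d})$ in the coordinates of the $p_i$; this polynomial is not identically zero (it is nonzero at a configuration of $d$ coordinate vectors together with arbitrary remaining points), so its zero set is a proper algebraic variety and thus has Lebesgue measure zero. The main obstacle is condition (2): the relevant determinants depend on \emph{which} faces are incident to a vertex, and this incidence structure is only locally defined. I would handle this by working on the open stratum where (1) already holds, partitioning it into the (finitely many) open pieces of constant combinatorial type; on each such piece the relevant vertex-incidence determinants are honest polynomials (or rational functions with nonvanishing denominators), each not identically zero, so each exceptional locus is again measure-zero. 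Taking the finite union over all index sets and all combinatorial strata, and using that $Q$ pushes forward Lebesgue measure to the chosen measure on $\mathscr{P}_N$ while being locally one-to-one, shows the non-generic polytopes form a null set.

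The hardest part is the bookkeeping for condition (2): verifying that the vertex-face incidence varieties are genuinely proper (i.e., that the determinants are not identically zero on each stratum) requires exhibiting, for each admissible combinatorial type, at least one configuration in that stratum where the $d$ normals at the vertex are independent. I expect this to follow from a perturbation argument — starting from any polytope in the stratum and perturbing the $p_i$ slightly to achieve independence while preserving combinatorial type, which is possible precisely because (1) holds on this stratum — but the care lies in checking that the determinant function is a nonzero real-analytic (indeed polynomial) function so that its zero set has empty interior and zero measure. Once both exceptional loci are shown to be finite unions of proper algebraic sets, openness plus density plus full measure follow immediately.
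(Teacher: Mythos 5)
Your treatment of condition (1) of Definition~\ref{def generic polytopes} coincides with the paper's: the failure locus is covered by the zero sets of the polynomials $\det[p_{i_1},\ldots,p_{i_d}]$, finitely many proper algebraic hypersurfaces, hence closed and null. The genuine gap is in your treatment of condition (2), at two points. First, the claim that ``on the open set where (1) holds, the face lattice is locally constant'' is false: one can perturb the octahedron in $\Rr^3$ so that every triple of facet normals is independent while some vertex still has four incident facets, and near such a configuration the combinatorial type jumps (the $4$-valent vertex splits in two different ways). Local constancy of the face lattice is available at \emph{simple} polytopes, i.e.\ exactly where (2) already holds, so your openness argument is circular as written (the conclusion is standard and repairable, but not by the stated reasoning). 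Second, and more seriously, your plan for the measure-zero part --- stratify by combinatorial type and perturb within each stratum to make the vertex normals independent --- cannot work. The facet normals at any vertex of a $d$-polytope necessarily span $\Rr^d$ (the tangent cone at a vertex is pointed), so condition (2) fails at a vertex precisely when more than $d$ facets are incident with it; this is a property of the combinatorial type itself, the dependence holds identically on any such stratum, and no perturbation preserving the type can remove it. What you actually need is that these bad strata are themselves null, which is the original problem restated.

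The paper closes exactly this gap with one algebraic observation that bypasses all combinatorial bookkeeping: if a vertex has $d+1$ incident facets, then $d+1$ of the hyperplanes $\langle x,p_{i_k}\rangle=\langle p_{i_k},p_{i_k}\rangle$ share a common point $x_0$, so the $(d+1)\times(d+1)$ matrix with rows $\hat p_{i_k}:=(p_{i_k},\langle p_{i_k},p_{i_k}\rangle)$ annihilates $(x_0,-1)$, forcing $\det[\hat p_{i_1},\ldots,\hat p_{i_{d+1}}]=0$. This is a single polynomial condition per $(d+1)$-element index set, not identically zero (evaluate at $p_k=e_k$ for $k\le d$ and $p_{d+1}=2e_1$, which gives determinant $\pm2$), so the failure set for (2) is covered by finitely many proper algebraic hypersurfaces with no stratification needed. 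You should replace your combinatorial-type argument by this lifting trick; once both exceptional sets sit inside a finite union of proper hypersurfaces, density and full measure follow, and openness follows from the stability of simple polytopes under perturbation of their facet hyperplanes.
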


\begin{proof}
Consider the subsets $\mathcal{N}_1\subset \mathcal{P}_{N}$, resp. $\mathcal{N}_2\subset \mathcal{P}_{N}$, of polytopes where condition (1), resp. (2), of definition~\ref {def generic polytopes} is violated. It is enough to observe that the sets $\mathcal{N}_1$ and  $\mathcal{N}_2$
 are finite unions of algebraic varieties of co-dimension one.

 For any vector $v=(v_1,\ldots, v_d) \in \Rr^d$, let
 $\hat v:=(v_1,\ldots, v_d, \langle v,v\rangle)\in\Rr^{d+1}$. Then $\mathcal{N}_2$ is covered by the union over all $1\leq i_1 <i_2 <\ldots < i_{d+1}\leq N$
 of the  hypersurfaces defined by the algebraic equation
\begin{equation}
\label{det hat p = 0}
\det [ \, \hat p_{i_1},  \hat p_{i_2}, \ldots ,
  \hat p_{i_{d+1}} ]=0\;. 
\end{equation}
 In fact, if there is a point $x_0\in\Rr^d$ in the intersection of $d+1$ distinct hyperplanes
$$ \langle p_{i_k}, x\rangle = \langle p_{i_k}, p_{i_k}\rangle\quad k=1,\ldots, d+1 $$
then the matrix with rows $\hat p_{i_1},  \hat p_{i_2}, \ldots ,
  \hat p_{i_{d+1}}$ contains the vector $(x_0,-1)\in\Rr^{d+1}$ in its kernel,   which implies~\eqref{det hat p = 0}.

Analogously, $\mathcal{N}_1$ is contained in the union over all $1\leq i_1 <i_2 <\ldots < i_{d}\leq N$
 of the  hypersurfaces defined by the algebraic equation
 $$ \det [ \,   p_{i_1},    p_{i_2}, \ldots ,
    p_{i_{d}} ]=0\;. $$
\end{proof}

\section{Escaping Times}
\label{sec:escape}

In this section we study the escaping times of billiards on polyhedral cones with contracting reflection laws.

Let $\Pi_1,\ldots,\Pi_s$ be $s$ hyperplanes in $\Rr^d$ passing through the origin. For each hyperplane $\Pi_i$ we take a unit normal vector $\eta_i$ and we suppose that the set of hyperplanes are in general position, i.e. the normal vectors $\eta_1,\ldots,\eta_s$ are linearly independent. A set of $s$ hyperplanes in general position define a \textit{convex polyhedral cone}
$$
Q=\{x\in \Rr^d\colon \left\langle x,\eta_i\right\rangle\geq0\,,\quad i=1,\ldots,s\}\,.
$$

For polyhedral billiard with the specular reflection law, Sinai proved that there exists a constant $K>0$, depending only on $Q$, such that every billiard trajectory in $Q$ has at most $K$ reflections \cite{Sinai78}. In this case we say that $Q$ has \textit{finite escaping time}. 

By projecting the billiard dynamics to the orthogonal complement of $\bigcap_{i=1}^s\Pi_i$, we may assume that the normal vectors $\eta_1,\ldots,\eta_s$ defining the polyhedral cone $Q$ span $\Rr^d$. Thus, from now on we set $s=d$. 
Associated with a convex polyhedral cone $Q$ there is a constant measuring the aperture of $Q$. It is defined as follows. 
\begin{defn}
\label{def barycentric angle}
The normal vectors $\eta_1,\ldots,\eta_d$ regarded as points determine a affine hyperplane $H$ and a unit normal vector $e$ such that
$$
\left\langle \eta_i,e\right\rangle=\ell\,,\quad i=1,\ldots,d\,,
$$
where $\ell$ is the distance of $H$ to the origin. The \textit{barycentric angle} $\phi$ of $Q$ is defined by $\sin\phi=\ell$ (see Figure~\ref{angle}). Note that $0<\phi<\pi/2$. We say that a convex polyhedral cone $Q$ is \textit{obtuse} if $\phi>\pi/4$.
\end{defn}

\begin{figure}[h]
\includegraphics[width=7cm]{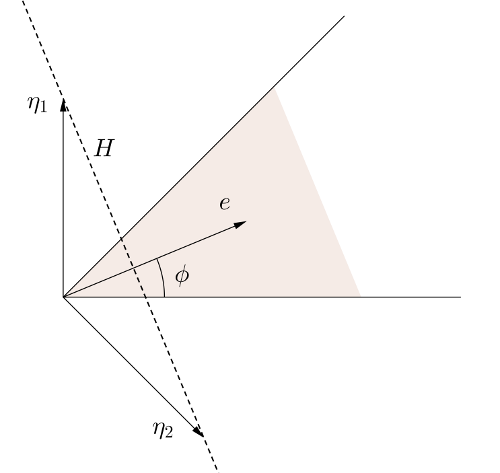}
\caption{Barycentric angle $\phi$.}
\label{angle}
\end{figure}

\subsection{Zigzag reflections}
According to Proposition \ref{prop contractive law}, given any billiard orbit $\{(p_k,v_k)\}_{k\geq 0}$, the sequence of \textit{reflection velocities} satisfies
\begin{equation}\label{eq:vk}
v_{k+1}=\frac{\cos f(\theta_k)}{\cos\theta_k}P_{\eta_{i_k}}(u_k)+\frac{\sin f(\theta_k)}{\sin\theta_k}P_{\eta_{i_k}^\perp}(u_k)\,,\quad k\geq0,
\end{equation}
where $u_k=R_{\eta_{i_k}}(v_k)$, $\theta_k=\arccos\left\langle u_k,\eta_{i_k}\right\rangle$ and $\eta_{i_k}$ is the inward normal of $P$ where the $k+1$-th collision took place.

\begin{lemma}\label{lem:vkdiff}$\|v_{k+1}-v_k\|=2\cos\left(\frac{f(\theta_k)+\theta_k}{2}\right)$ for every $k\geq 0$.
\end{lemma}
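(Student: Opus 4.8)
The plan is to carry out the whole computation in the orthogonal splitting $\Rr^d=\linspan{\eta_{i_k}}\oplus \eta_{i_k}^\perp$, which collapses the identity to a single application of the cosine addition formula. Abbreviate $\eta=\eta_{i_k}$, $\theta=\theta_k$ and $f=f(\theta_k)$. First I would record that, by the very definition $\theta_k=\arccos\langle u_k,\eta\rangle$, the decomposition of the unit vector $u_k$ is $u_k=\cos\theta\,\eta+w$ where $w:=P_{\eta^\perp}(u_k)$ satisfies $\norm{w}=\sin\theta$; correspondingly $P_\eta(u_k)=\cos\theta\,\eta$.

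The key point is to recover $v_k$ from $u_k$. Since $u_k=R_\eta(v_k)$ and $R_\eta$ is an involution ($R_\eta\circ R_\eta=\id$), we have $v_k=R_\eta(u_k)=u_k-2\langle u_k,\eta\rangle\eta=-\cos\theta\,\eta+w$. On the other hand, substituting the two projections into the reflection formula~\eqref{eq:vk} yields $v_{k+1}=\cos f\,\eta+\tfrac{\sin f}{\sin\theta}\,w$. Subtracting gives the clean expression
$$ v_{k+1}-v_k=(\cos f+\cos\theta)\,\eta+\Big(\tfrac{\sin f}{\sin\theta}-1\Big)\,w, $$
a sum of one vector along $\eta$ and one along $\eta^\perp$.

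Because $\eta\perp w$ with $\norm{\eta}=1$ and $\norm{w}=\sin\theta$, the squared norm splits as a sum of two squares, and the $w$-term simplifies via $\big(\tfrac{\sin f}{\sin\theta}-1\big)^2\sin^2\theta=(\sin f-\sin\theta)^2$:
$$ \norm{v_{k+1}-v_k}^2=(\cos f+\cos\theta)^2+(\sin f-\sin\theta)^2=2+2\cos(f+\theta), $$
where the last equality uses $\cos^2+\sin^2=1$ together with $\cos f\cos\theta-\sin f\sin\theta=\cos(f+\theta)$. The half-angle identity $2+2\cos\alpha=4\cos^2(\alpha/2)$ then gives $\norm{v_{k+1}-v_k}=2\abs{\cos\big(\tfrac{f+\theta}{2}\big)}$, and since $f,\theta\in[0,\tfrac{\pi}{2})$ force $\tfrac{f+\theta}{2}\in[0,\tfrac{\pi}{2})$, the cosine is nonnegative and the absolute value may be dropped.

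There is no real difficulty in this argument; the only step needing attention is not to conflate the incoming velocity $v_k$ with its reflection $u_k$ — the identity $v_k=R_\eta(u_k)$ supplies exactly the sign on the $\eta$-component that makes the cross terms recombine into $\cos(f+\theta)$.
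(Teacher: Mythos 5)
Your proof is correct and is exactly the ``simple computation using \eqref{eq:vk}'' that the paper leaves to the reader: decompose everything along $\linspan{\eta_{i_k}}\oplus\eta_{i_k}^\perp$, use $v_k=R_{\eta_{i_k}}(u_k)$ to get the sign $-\cos\theta_k$ on the normal component, and finish with the addition and half-angle formulas. Nothing is missing.
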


\begin{proof}
Simple computation using \eqref{eq:vk}.
\end{proof}

Given a sequence of consecutive reflection velocities $v_0,\ldots,v_n$ we denote by $L$ the length of the zigzag path formed by the reflections, i.e.
$$
L(v_0,\ldots,v_n)=\sum_{k=0}^{n-1}\|v_{k+1}-v_k\|\,.
$$
We say that $Q$ has \textit{bounded zigzag reflections} if there exists a constant $C>0$ such that $L(v_0,\ldots,v_n)\leq C$ for every sequence of consecutive reflection velocities $v_0,\ldots,v_n$ and any $n\geq 0$.

\begin{lemma} \label{prop:escapingzigzag}
A convex polyhedral cone has finite escaping time if and only if it has bounded zigzag reflections.
\end{lemma}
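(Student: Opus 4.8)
The plan is to reduce both implications to a two-sided control on the length of a single zigzag step $\|v_{k+1}-v_k\|$. By Lemma~\ref{lem:vkdiff} this length equals $2\cos\left(\frac{f(\theta_k)+\theta_k}{2}\right)$, so I will show that it is bounded above by the trivial constant $2$ (being the difference of two unit vectors) and, more importantly, bounded below by a positive constant depending only on $f$. Granting a uniform lower bound $\|v_{k+1}-v_k\|\ge 2c_0$ with $c_0>0$, the equivalence follows at once: a bounded number of reflections forces $L(v_0,\ldots,v_n)=\sum_{k=0}^{n-1}\|v_{k+1}-v_k\|\le 2n$ to be bounded, while conversely a uniform bound $L(v_0,\ldots,v_n)\le C$ forces $2c_0\,n\le L(v_0,\ldots,v_n)\le C$, hence $n\le C/(2c_0)$, which is exactly finite escaping time.

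The heart of the argument --- and the only place where the contracting hypothesis is used --- is the uniform lower bound. Recall from Proposition~\ref{prop contractive law} that $f(0)=0$ and $\lambda(f)=\sup_{0\le\theta<\pi/2}|f'(\theta)|<1$, so the mean value theorem gives $f(\theta)\le\lambda(f)\,\theta$ for all $\theta\in[0,\pi/2)$. Consequently
\[
\frac{f(\theta)+\theta}{2}\le\frac{1+\lambda(f)}{2}\,\theta<\frac{(1+\lambda(f))\pi}{4}<\frac{\pi}{2},
\]
the last inequality using $\lambda(f)<1$. Since cosine is positive and decreasing on $[0,\pi/2)$, we obtain
\[
\|v_{k+1}-v_k\|=2\cos\left(\frac{f(\theta_k)+\theta_k}{2}\right)\ge 2\cos\left(\frac{(1+\lambda(f))\pi}{4}\right)=:2c_0>0
\]
for every $k$, with $c_0$ depending only on $f$.

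With this estimate in hand the two directions are immediate, as indicated above. I would spell out that ``finite escaping time'' means a uniform bound $K$ on the number of reflections of any trajectory, so that every admissible sequence of consecutive reflection velocities $v_0,\ldots,v_n$ has $n\le K$, giving $L(v_0,\ldots,v_n)\le 2K$; and that conversely a uniform zigzag bound $C$ yields $n\le C/(2c_0)$, so no trajectory reflects more than $\lfloor C/(2c_0)\rfloor$ times. The main obstacle is conceptual rather than computational: one must observe that strict contraction, $\lambda(f)<1$, keeps the angle $\tfrac{1}{2}(f(\theta)+\theta)$ uniformly away from the grazing value $\pi/2$, which is precisely what prevents the zigzag steps from degenerating to zero length near tangential collisions. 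Note that no spanning or general-position hypothesis on $Q$ is needed here, as the equivalence rests entirely on this one-step estimate.
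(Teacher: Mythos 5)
Your proof is correct and follows essentially the same route as the paper's: both rest on the uniform lower bound $\|v_{k+1}-v_k\|\geq 2\cos\bigl(\tfrac{f(\pi/2)+\pi/2}{2}\bigr)>0$ coming from Lemma~\ref{lem:vkdiff} and the strict contraction of $f$. The only (harmless) difference is in the easy direction, where you use the trivial bound $\|v_{k+1}-v_k\|\leq 2$ in place of the paper's compactness argument for the maximum of $L$.
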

\begin{proof}
If $Q$ has finite escaping time, then there exists an integer $K>0$ such that every billiard trajectory has at most $K$ reflections. Since the zigzag length $L:\prod_{i=1}^K\Ss^m\to \Rr$ is a continuous function with compact domain, it has a maximum. Thus, $Q$ has bounded zigzag reflections.

Now suppose that $Q$ has not finite escaping time. This means that for every $K>0$ there exists a billiard trajectory in $Q$ that has at least $K$ reflections with the faces of $Q$. 
By Lemma~\ref{lem:vkdiff} we have
$\|v_{k+1}-v_k\|\geq \delta>0$ where $\delta:=2\cos\left(\frac{f(\pi/2)+\pi/2}{2}\right)>0$. This means that for every $K>0$ there exists a sequence of consecutive reflection velocities $v_0,\ldots,v_n$ such that $L(v_0,\ldots,v_n)\geq \delta K$. So $Q$ cannot have bounded zigzag reflections.
\end{proof}

Next we provide a sufficient condition on the contracting reflection law that guarantees boundedness of zigzag reflections. Thus finite escaping time.

\begin{lemma} \label{lem:vk}
For every sequence of consecutive reflection velocities $v_0,\ldots,v_n$ we have
$$
\left\langle v_{k+1}-v_k,e\right\rangle=\|v_{k+1}-v_k\|\gamma_k\,,\quad k=0,\ldots,n
$$
where 
$$
\gamma_k=\cos\left(\frac{f(\theta_k)-\theta_k}{2}\right)\sin\phi+\sin\left(\frac{f(\theta_k)-\theta_k}{2}\right)h_k
$$
and $h_k=\left\langle P_{\eta_{i_k}^\perp}(u_k)/\sin\theta_k,e\right\rangle$.
\end{lemma}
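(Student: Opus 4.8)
The plan is to prove the identity by a direct computation, resolving both $v_k$ and $v_{k+1}$ into their components along $\eta_{i_k}$ and along the orthogonal hyperplane $\eta_{i_k}^\perp$, and then pairing their difference with the unit vector $e$ determined in Definition~\ref{def barycentric angle}.

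First I would express the two velocities in the orthogonal frame fixed by $\eta_{i_k}$. Since $u_k=R_{\eta_{i_k}}(v_k)$ and $R_{\eta_{i_k}}$ is an involution, one has $v_k=R_{\eta_{i_k}}(u_k)$; using $\langle u_k,\eta_{i_k}\rangle=\cos\theta_k$ and hence $P_{\eta_{i_k}}(u_k)=\cos\theta_k\,\eta_{i_k}$, this gives $v_k=-\cos\theta_k\,\eta_{i_k}+P_{\eta_{i_k}^\perp}(u_k)$. On the other hand, formula \eqref{eq:vk} reads $v_{k+1}=\cos f(\theta_k)\,\eta_{i_k}+\frac{\sin f(\theta_k)}{\sin\theta_k}P_{\eta_{i_k}^\perp}(u_k)$. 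Subtracting yields
\begin{equation*}
v_{k+1}-v_k=\bigl(\cos f(\theta_k)+\cos\theta_k\bigr)\,\eta_{i_k}+\Bigl(\tfrac{\sin f(\theta_k)}{\sin\theta_k}-1\Bigr)\,P_{\eta_{i_k}^\perp}(u_k).
\end{equation*}

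Next I would take the inner product with $e$. By Definition~\ref{def barycentric angle} one has $\langle\eta_{i_k},e\rangle=\sin\phi$, while by the definition of $h_k$ one has $\langle P_{\eta_{i_k}^\perp}(u_k),e\rangle=\sin\theta_k\,h_k$, so that
\begin{equation*}
\langle v_{k+1}-v_k,e\rangle=\bigl(\cos f(\theta_k)+\cos\theta_k\bigr)\sin\phi+\bigl(\sin f(\theta_k)-\sin\theta_k\bigr)\,h_k.
\end{equation*}
The final step is to apply the sum-to-product identities $\cos f(\theta_k)+\cos\theta_k=2\cos\bigl(\tfrac{f(\theta_k)+\theta_k}{2}\bigr)\cos\bigl(\tfrac{f(\theta_k)-\theta_k}{2}\bigr)$ and $\sin f(\theta_k)-\sin\theta_k=2\cos\bigl(\tfrac{f(\theta_k)+\theta_k}{2}\bigr)\sin\bigl(\tfrac{f(\theta_k)-\theta_k}{2}\bigr)$, and then to factor out the common factor $2\cos\bigl(\tfrac{f(\theta_k)+\theta_k}{2}\bigr)$, which equals $\norm{v_{k+1}-v_k}$ by Lemma~\ref{lem:vkdiff}. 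What remains inside the bracket is precisely $\gamma_k$, completing the identity.

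Since the whole argument is an elementary and transparent computation, I do not expect any genuine obstacle; the only care required is in tracking the orthogonal decomposition of $u_k$ (and the resulting signs) and in matching the half-angle factor produced by the trigonometric identities with the norm formula supplied by Lemma~\ref{lem:vkdiff}.
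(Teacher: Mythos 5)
Your proof is correct and follows essentially the same route as the paper's: decompose $v_{k+1}-v_k$ along $\eta_{i_k}$ and $\eta_{i_k}^\perp$ using \eqref{eq:vk}, pair with $e$ via $\langle\eta_{i_k},e\rangle=\sin\phi$, apply the sum-to-product identities, and identify the factor $2\cos\bigl(\tfrac{f(\theta_k)+\theta_k}{2}\bigr)$ with $\norm{v_{k+1}-v_k}$ from Lemma~\ref{lem:vkdiff}. The only difference is cosmetic: you make explicit the step $v_k=R_{\eta_{i_k}}(u_k)=-\cos\theta_k\,\eta_{i_k}+P_{\eta_{i_k}^\perp}(u_k)$, which the paper leaves implicit in its displayed formula for $v_{k+1}-v_k$.
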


\begin{proof}
Follows from \eqref{eq:vk} that
$$
v_{k+1}-v_k=\frac{\cos{f(\theta_k)}+\cos\theta_k}{\cos\theta_k} P_{\eta_{i_k}}(u_k)+\frac{\sin{f(\theta_k)}-\sin\theta_k}{\sin\theta_k} P_{\eta_{i_k}^\perp}(u_k)\,.
$$
Taking into account that $P_{\eta_{i_k}}(u_k)/\cos\theta_k=\eta_{i_k}$ and $\left\langle \eta_{i_k},e\right\rangle=\sin\phi$ we get
$$
\left\langle v_{k+1}-v_k,e\right\rangle=\left(\cos{f(\theta_k)}+\cos\theta_k\right)\sin\phi+\left(\sin{f(\theta_k)}-\sin\theta_k\right)h_k\,,
$$
where $h_k=\left\langle P_{\eta_{i_k}^\perp}(u_k)/\sin\theta_k,e\right\rangle$. Using classical trigonometric identities we can write
$$
\left\langle v_{k+1}-v_k,e\right\rangle = 2\cos\left(\frac{f(\theta_k)+\theta_k}{2}\right)\gamma_k,
$$
where 
$$
\gamma_k=\cos\left(\frac{f(\theta_k)-\theta_k}{2}\right)\sin\phi+\sin\left(\frac{f(\theta_k)-\theta_k}{2}\right)h_k.
$$
To conclude the proof apply Lemma~\ref{lem:vkdiff}.
\end{proof}

\begin{theorem} \label{th:zigzag}
If $2\phi>\pi/2-f(\pi/2)$ then $Q$ has finite escaping time.
\end{theorem}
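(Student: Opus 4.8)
The plan is to invoke Lemma~\ref{prop:escapingzigzag} and reduce the claim to showing that $Q$ has \emph{bounded zigzag reflections}; that is, I would produce a constant $C>0$ with $L(v_0,\dots,v_n)\le C$ for every sequence of consecutive reflection velocities and every $n$. The mechanism is a telescoping argument against the barycentric direction $e$: since all $v_k$ are unit vectors, $\sum_{k=0}^{n-1}\langle v_{k+1}-v_k,e\rangle=\langle v_n-v_0,e\rangle\le\norm{v_n-v_0}\le 2$, and since Lemma~\ref{lem:vk} rewrites each summand as $\langle v_{k+1}-v_k,e\rangle=\norm{v_{k+1}-v_k}\,\gamma_k$, it suffices to bound $\gamma_k$ below by a positive constant $c$ independent of $k$ and of the orbit. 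Indeed such a bound gives $c\,L(v_0,\dots,v_n)\le\sum_{k}\norm{v_{k+1}-v_k}\,\gamma_k=\langle v_n-v_0,e\rangle\le2$, whence $L\le 2/c$.

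The heart of the proof is therefore the uniform lower bound $\gamma_k\ge c>0$. Writing $\beta_k:=\tfrac12(\theta_k-f(\theta_k))$, I first note two facts. Since $f(0)=0$ and $\abs{f'}<1$ (Proposition~\ref{prop contractive law}), one has $0\le f(\theta)<\theta$ and $\theta\mapsto\theta-f(\theta)$ is strictly increasing; as $\theta_k<\pi/2$ this forces $\beta_k\in[0,\beta^\ast)$ with $\beta^\ast:=\tfrac12(\pi/2-f(\pi/2))$, interpreting $f(\pi/2)$ as the limit $\lim_{\theta\to\pi/2^-}f(\theta)$, which exists because $f$ is Lipschitz. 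Second, I would sharpen the crude bound $\abs{h_k}\le1$ to $\abs{h_k}\le\cos\phi$: the vector $w_k:=P_{\eta_{i_k}^\perp}(u_k)/\sin\theta_k$ is a unit vector lying in $\eta_{i_k}^\perp$ (as $u_k$ is unit and $\theta_k=\arccos\langle u_k,\eta_{i_k}\rangle$), so $h_k=\langle w_k,e\rangle=\langle w_k,P_{\eta_{i_k}^\perp}(e)\rangle$, and since $\langle\eta_{i_k},e\rangle=\sin\phi$ we get $\norm{P_{\eta_{i_k}^\perp}(e)}=\cos\phi$, giving $\abs{h_k}\le\cos\phi$ by Cauchy--Schwarz. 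Plugging these into the formula of Lemma~\ref{lem:vk} and using $\sin\beta_k\ge0$,
$$\gamma_k=\cos\beta_k\,\sin\phi-\sin\beta_k\,h_k\ge\cos\beta_k\,\sin\phi-\sin\beta_k\,\cos\phi=\sin(\phi-\beta_k).$$

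Finally I would read off the constant. Because $\beta_k\in[0,\beta^\ast)$ and the hypothesis $2\phi>\pi/2-f(\pi/2)$ says exactly $\phi>\beta^\ast$, the argument $\phi-\beta_k$ lies in $(\,\phi-\beta^\ast,\phi\,]\subset(0,\pi/2)$, on which $\sin$ is increasing; hence $\gamma_k\ge\sin(\phi-\beta_k)\ge\sin(\phi-\beta^\ast)=:c>0$. Combined with the telescoping estimate above this yields $L\le 2/c$, so $Q$ has bounded zigzag reflections and therefore finite escaping time by Lemma~\ref{prop:escapingzigzag}. The step I expect to be the crucial and least automatic one is the refined inequality $\abs{h_k}\le\cos\phi$: the naive bound $\abs{h_k}\le1$ only gives $\gamma_k\ge\cos\beta_k\,\sin\phi-\sin\beta_k$, whose positivity would demand $\sin\phi>\tan\beta^\ast$ instead of the sharp threshold $\phi>\beta^\ast$, and it is precisely the gain from $1$ to $\cos\phi$ that converts the estimate into $\sin(\phi-\beta_k)$ and matches the stated hypothesis.
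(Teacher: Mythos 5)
Your proposal is correct and follows essentially the same route as the paper: telescoping $\langle v_{k+1}-v_k,e\rangle$ against the barycentric direction, invoking Lemma~\ref{lem:vk}, bounding $\gamma_k\ge\sin\bigl(\phi+\tfrac{f(\theta_k)-\theta_k}{2}\bigr)\ge\sin\mu>0$ via $h_k\le\cos\phi$, and concluding with Lemma~\ref{prop:escapingzigzag}. The only difference is that you supply details the paper leaves implicit (the Cauchy--Schwarz justification of $h_k\le\cos\phi$ and the monotonicity of $\theta\mapsto\theta-f(\theta)$), which is fine.
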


\begin{proof}
Let $v_0,\ldots,v_n$ be any sequence of consecutive reflection velocities. By Lemma~\ref{lem:vk}, 
\begin{equation}\label{eq:estimateL}
2\geq\left\langle v_{n}-v_0,e\right\rangle=\sum_{k=0}^{n-1}\|v_{k+1}-v_k\|\gamma_k\,,
\end{equation}
where
$$
\gamma_k=\cos\left(\frac{f(\theta_k)-\theta_k}{2}\right)\sin\phi+\sin\left(\frac{f(\theta_k)-\theta_k}{2}\right)h_k
$$
and $h_k=\left\langle P_{\eta_{i_k}^\perp}(u_k)/\sin\theta_k,e\right\rangle$. To estimate $\gamma_k$ from below note that $h_k\leq \cos\phi$. Thus
$$
\gamma_k\geq\sin\left(\phi +\frac{f(\theta_k)-\theta_k}{2}\right)\geq\sin\left(\phi+\frac{f(\pi/2)-\pi/2}{2}\right).
$$
By assumption $\mu:=\phi+\frac{f(\pi/2)-\pi/2}{2}>0$. Then, it follows from \eqref{eq:estimateL} that
$$
L(v_0,\ldots,v_n)<\frac{2}{\sin\mu}\,,
$$
for every sequence of consecutive reflection velocities $v_0,\ldots,v_n$. This proves that $Q$ has bounded zigzag reflections. Thus, by Lemma \ref{prop:escapingzigzag}, $Q$ has finite escaping time. 
\end{proof}

This theorem yields the following corollaries. 

\begin{corollary}\label{cor escaping 1}
Any polyhedral cone $Q$ with contracting reflection law $f$ sufficiently close to the specular one has finite escaping time.
\end{corollary}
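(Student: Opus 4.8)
The plan is to deduce this directly from Theorem~\ref{th:zigzag}, whose hypothesis $2\phi>\pi/2-f(\pi/2)$ becomes easy to verify once the cone $Q$ is fixed. The key observation is that the specular reflection law corresponds to the identity map $f_0(\theta)=\theta$; although $f_0$ is not itself a contracting law (it has $\lambda(f_0)=1$), any contracting law close to the specular one will have $f$ close to the identity, and in particular $f(\pi/2)$ close to $\pi/2$. Here $f(\pi/2)$ denotes the limit $\lim_{\theta\to\pi/2^-}f(\theta)$, which exists because the uniform bound $|f'|\le\lambda(f)<1$ makes $f$ Lipschitz, hence extendable continuously to the closed interval $[0,\pi/2]$.

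First I would note that the barycentric angle $\phi$ of $Q$ is a fixed number in $(0,\pi/2)$ depending only on $Q$, as in Definition~\ref{def barycentric angle}. Consequently the threshold $\pi/2-2\phi$ is a fixed quantity strictly below $\pi/2$. The hypothesis of Theorem~\ref{th:zigzag} can then be rewritten as the single inequality $f(\pi/2)>\pi/2-2\phi$ on the (extended) value of $f$ at $\pi/2$.

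Next I would make ``sufficiently close to the specular law'' precise as $C^0$-closeness of the extended maps on $[0,\pi/2]$: if $\sup_{0\le\theta\le\pi/2}|f(\theta)-\theta|<2\phi$, then evaluating at $\theta=\pi/2$ gives $f(\pi/2)>\pi/2-2\phi$. Since for the specular law one has exactly $f_0(\pi/2)=\pi/2>\pi/2-2\phi$, this is an open condition satisfied by all contracting reflection laws in a sufficiently small neighbourhood of the specular one. Applying Theorem~\ref{th:zigzag} then shows that $Q$ has finite escaping time.

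I do not expect any genuine obstacle here: the corollary is essentially an openness remark about the inequality in Theorem~\ref{th:zigzag}. The only point requiring a little care is the meaning of $f(\pi/2)$ and the choice of topology in which ``close to specular'' is measured; both are handled by the Lipschitz bound $\lambda(f)<1$, which guarantees the continuous extension to $[0,\pi/2]$ and makes the relevant quantity $f(\pi/2)$ depend continuously on $f$.
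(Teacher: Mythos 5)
Your argument is correct and is essentially the paper's own proof: the paper likewise observes that $2\phi>\pi/2-f(\pi/2)$ holds for every contracting law sufficiently close to the specular one and then invokes Theorem~\ref{th:zigzag}. Your additional care about the meaning of $f(\pi/2)$ as the Lipschitz-continuous extension to the closed interval is a sensible clarification but does not change the route.
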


\begin{proof}
It is clear that $2\phi>\pi/2-f(\pi/2)$ for every contraction $f$ sufficiently close to the identity. Thus, $Q$ has finite escaping time, by Theorem~\ref{th:zigzag}.
\end{proof}

Recall that a convex polyhedral cone $Q$ is obtuse if $\phi>\pi/4$.
\begin{corollary}\label{cor escaping 2}
Any obtuse polyhedral cone $Q$ has finite escaping time for every contracting reflection law $f$.
\end{corollary}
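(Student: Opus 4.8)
The final statement is Corollary~\ref{cor escaping 2}: any obtuse polyhedral cone $Q$ (that is, one with barycentric angle $\phi>\pi/4$) has finite escaping time for \emph{every} contracting reflection law $f$. The plan is to deduce this immediately from Theorem~\ref{th:zigzag}, whose hypothesis is $2\phi>\pi/2-f(\pi/2)$. So the entire task reduces to verifying that the obtuseness condition $\phi>\pi/4$ forces this inequality, uniformly over all admissible $f$.

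First I would recall what is known about $f(\pi/2)$. By definition a contracting reflection law satisfies $f:[0,\pi/2)\to[0,\pi/2)$ with $f(0)=0$ and $\lambda(f)=\sup|f'|<1$. In particular $f$ maps into $[0,\pi/2)$, and since $\lambda(f)<1$ the value $f(\theta)$ stays strictly below $\theta$ for $\theta>0$; more importantly for us, the relevant quantity $f(\pi/2)$ (understood as the limit $\lim_{\theta\to\pi/2^-}f(\theta)$, which exists since $f$ is Lipschitz) lies in $[0,\pi/2)$, hence $f(\pi/2)\geq 0$. Therefore $\pi/2 - f(\pi/2) \leq \pi/2$.

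The second and decisive step is purely arithmetic: the obtuseness hypothesis gives $\phi>\pi/4$, so $2\phi>\pi/2$. Combining this with the bound $\pi/2-f(\pi/2)\leq\pi/2$ from the previous step yields
$$
2\phi > \frac{\pi}{2} \geq \frac{\pi}{2} - f(\pi/2),
$$
which is exactly the hypothesis of Theorem~\ref{th:zigzag}. Note that this chain of inequalities holds regardless of which contracting reflection law $f$ we are given, because the only property of $f$ used is $f(\pi/2)\geq 0$, which every contracting reflection law satisfies. This is the point I would emphasize: the conclusion is uniform in $f$.

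There is really no substantive obstacle here, since the analytic content is entirely contained in Theorem~\ref{th:zigzag}; the corollary is a one-line specialization. The only mild subtlety worth flagging is the interpretation of $f(\pi/2)$ as a one-sided limit (the domain of $f$ being the half-open interval $[0,\pi/2)$), but the Lipschitz bound $\lambda(f)<1$ guarantees this limit exists and lies in $[0,\pi/2)$, so $f(\pi/2)\geq 0$ is unambiguous. Applying Theorem~\ref{th:zigzag} then gives finite escaping time, completing the proof.
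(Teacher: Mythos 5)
Your proof is correct and follows essentially the same route as the paper: obtuseness gives $2\phi>\pi/2$, and since $f(\pi/2)\geq 0$ the hypothesis $2\phi>\pi/2-f(\pi/2)$ of Theorem~\ref{th:zigzag} holds for every contracting reflection law. Your use of $\pi/2\geq \pi/2-f(\pi/2)$ (rather than the paper's strict inequality) is in fact the slightly more careful version, since it covers laws with $f(\pi/2)=0$.
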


\begin{proof}
If the polyhedral cone is obtuse then $\phi>\pi/4$. Thus, $2\phi>\pi/2>\pi/2-f(\pi/2)$ for every contraction $f$. Thus, $Q$ has finite escaping time, by Theorem~\ref{th:zigzag}.
\end{proof}

\section{Uniform Expansion}
\label{sec:unifexp}

By Proposition~\ref{prop derivative in Jacobi coordinates},
the first component  of the derivative $ D \Phi_{f,P}(p,v)$ of the billiard map is represented in Jacobi coordinates by the map 
\begin{equation}
\label{Lvetav'}
L_{v,\eta,v'} := P_{{v'}^ \perp}\circ P_{v,\eta^ \perp}:\Rr^d\to\Rr^d
\end{equation}
 where $v',v,\eta\in\Rr^d$
  are three coplanar unit vectors   with $v'=C_\eta (R_\eta(v))$.

The main result of this section is Theorem~\ref{hyperb:main},
which gives conditions that ensure the uniform expansion of compositions of such maps.
Since the second component of the billiard map is contracting (see Proposition~\ref{prop:partialhyperbolic}),
these conditions will imply the uniform hyperbolicity of the  billiard map.

\subsection{Trajectories}

Let $P$ be a $d$-dimensional polytope in $\Rr^d$,
and ${\mathscr N}_P$ be the set of its unit inward normals. Denote by $\Nn_0$ the set of natural numbers $\Nn$ including $0$.

In the sequel we introduce a space of trajectories 
containing true orbits of the billiard map of $P$.
The reason is to exploit the compactness of this space
which does not hold for the billiard map's phase space,
since one has to exclude from the phase space all orbits which eventually hit the skeleton of $P$.

Define  the map
$h\colon D \to (\Ss\times\mathscr N_P)^{\Nn_0}$,
$h(p,v):=\{(v_j,\eta_{i_j}\}_{j\in \Nn_0}$ where for all $j\geq 0$,  $\Phi_{f,P}^j(p,v)=(p_j,v_j)$ with $p_j\in F_{i_j}$. Recall that $D$ is maximal invariant set defined in Section~\ref{section hyperbolicity}.
This map semi-conjugates the billiard map $\Phi_{f,P}$
with the  shift  on the space of sequences
$ (\Ss\times\mathscr N_P)^{\Nn_0}$.
Since $h(D)$ is not compact we introduce the following definition extending the notion of billiard trajectory.

 Although ${\mathscr N}_P=\{\eta_1,\ldots, \eta_N\}$, in order to simplify our notation  from now on  we will write $\eta_j$, $j\in\Nn_0$,  for any normal  in ${\mathscr N}_P$ and not necessarily the $j$-th normal in ${\mathscr N}_P$.

\begin{defn}\label{def trajectory}
A sequence $\{(v_j,\eta_j)\}_{j\geq0}\in (\Ss\times\mathscr N_P)^{\Nn_0}$ is called
a {\em trajectory} if for all $j\in\Nn$
\begin{enumerate}
\item $\langle v_{j-1},\eta_j\rangle \leq 0$,

\item $v_{j} = C_{\eta_{j}} \circ R_{\eta_{j}}(v_{j-1})$,
\end{enumerate}
where $R_\eta$ is the reflection introduced in section~\ref{sec:statements}, and  
 $C_\eta$ is the contracting reflection law defined in subsection~\ref{subsec: contractive laws}.
 We denote by $\mathscr{T}=\mathscr{T}_{f,P}$ the 
 space of all trajectories.
\end{defn}

Note that
$$ h(D)\subset \mathscr{T} \subset (\Ss\times \mathscr{N}_P)^ {\Nn_0}. $$

Given $i<j$ in $\Nn_0$, we denote by
$[i,j]:=\{i,i+1,\ldots, j\}\subseteq \Nn_0$ the time interval between the instants $i$ and $j$. Given a trajectory $\{(v_j,\eta_j)\}_{j\geq0}$ and a time interval $[i,j]$,  the linear span $V_{[i,j]}:=\linspan{ v_i,v_{i+1},\ldots, v_j}$ is called the
 {\em velocity front} of the trajectory along the time interval $[i,j]$. The linear span $N_{[i,j]}:=\linspan{ \eta_i,\eta_{i+1},\ldots, \eta_j}$ is called the \textit{normal front} of the trajectory along the time interval $[i,j]$. Given $i\in \Nn$, let $L_i:v_{i-1}^\perp\to v_{i}^{\perp}$ be the linear map defined by
$$
L_i=P_{{v_i}^ \perp}\circ P_{v_{i-1},\eta_i^ \perp}.
$$
Finally we define the 
{\em velocity tangent flow} along  $[i,j]$
to be the linear map  $L_{[i,j]}:v_i^\perp \to v_j^\perp$
defined by
$$L_{[i,j]} = 
L_{j}\circ \ldots \circ  L_{i+1}.$$
When the trajectory is associated to a billiard orbit $\{(p_l,v_l)\}_{l\geq0}$ of $\Phi_{f,P}$, the linear map $L_{[i,j]}$ represents, in Jacobi coordinates,
the first component  of the derivative  $D\Phi_{f,P}^{j-i}$ at $(p_i,v_i)$.
By definition, given $i<j<k$,
$$L_{[i,k]}= L_{[j,k]}\circ L_{[i,j]}\;.$$

We now extend Definition~\ref{def k-generating}
to trajectories.

\begin{defn}\label{def generating}
We say that the trajectory $\{(v_l,\eta_l)\}_{l\geq0}$ is {\em generating on $[i,j]$} if   $N_{[i,j]}=\Rr^d$. Given $k \in\Nn$, we say that the trajectory is {\em $k$-generating} if it is generating on any interval $[i,j]$ with  $j-i\geq k$.
\end{defn}

We can now state this section's main result.
\begin{theorem}\label{hyperb:main}
Given $\varepsilon>0$, $d$-dimensional polytope $P$ and contracting reflection law $f$, there exists a constant $\sigma=\sigma(\varepsilon,d,f)>1$ such that for any trajectory $\{(v_j,\eta_j)\}_{j\geq0}$  
in $\mathscr{T}_{f,P}$ the following holds. If
\begin{enumerate}
\item $P$ is $\varepsilon$-spanning,
\item $\{(v_j,\eta_j)\}_{j\geq0}$ is $k$-generating,
with $k\in\Nn$,
\end{enumerate}
then  the linear map $L_{[0,2 k]}:v_0^\perp \to v_{2 k}^\perp$ 
satisfies 
$$\norm{L_{[0,2 k]} (v)} \geq \sigma\,\norm{v}\quad \text{ for all
}\; v\in v_0^\perp.$$
\end{theorem}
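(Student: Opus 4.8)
The plan is to reduce the statement to a clean structural description of each one-step map and then exploit the interplay between the velocity front $V_{[0,n]}$ and the normal front $N_{[1,n]}$. From Proposition~\ref{prop derivative in Jacobi coordinates} and the explicit form of $L_{v,\eta,v'}$ in \eqref{Lvetav'}, I would first show that each $L_i\colon v_{i-1}^\perp\to v_i^\perp$ is \emph{orthogonally block-diagonal} with respect to the splitting $v_{i-1}^\perp=(v_{i-1}^\perp\cap V_i)\oplus V_i^\perp$, where $V_i:=\linspan{v_{i-1},v_i}=\linspan{v_{i-1},\eta_i}$: on the line $v_{i-1}^\perp\cap V_i$ it is the scaling by $\rho_i:=\cos f(\theta_i)/\cos\theta_i\ge 1$ onto $v_i^\perp\cap V_i$, and on $V_i^\perp$ it is the identity. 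Two consequences are recorded at once: the norms $\norm{L_{[0,n]}(u)}$ are non-decreasing in $n$, so it suffices to gain a definite factor at a single well-chosen step before time $2k$; and, since $\eta_l\in\linspan{v_{l-1},v_l}$ for every $l$, one has $V_{[0,n]}=\linspan{v_0}+N_{[1,n]}$ with $N_{[1,n]}\subseteq V_{[0,n]}$.

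The quantitative heart of the argument is that a genuine front-growing step carries a definite expansion. If $\eta_i$ makes an angle at least $\varepsilon$ with the previous velocity front $V_{[0,i-1]}$, then, because $v_{i-1}\in V_{[0,i-1]}$ and the front is a linear subspace, $\theta_i=\angle(-v_{i-1},\eta_i)\ge\varepsilon$; consequently $\rho_i\ge\rho_\ast$, where $\rho_\ast:=\inf_{\varepsilon\le\theta<\pi/2}\cos f(\theta)/\cos\theta>1$. This infimum exceeds $1$ because $f(\theta)<\theta$ on $(0,\pi/2)$, and the quotient tends to $+\infty$ as $\theta\to\pi/2^-$, so the infimum is a minimum over a compact subinterval. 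Thus the only reflections with $\rho_i\approx 1$ are the near-normal ones, which are precisely those for which $\eta_i$ nearly lies in the front and the front fails to grow. The $\varepsilon$-spanning hypothesis (Definition~\ref{def spanning}) supplies the required angle bound, but only relative to the normal front: it controls $\angle(\eta_i,N_{[1,i-1]})$, not $\angle(\eta_i,\linspan{v_0}+N_{[1,i-1]})$.

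This discrepancy---the rogue direction $v_0$ sitting in the velocity front but not in the normal front---is exactly what the collinearity analysis removes, and I expect it to be the main obstacle. I would locate a minimal collinearity, i.e. the first instant $t$ at which $v_0\in N_{[1,t]}$, so that $V_{[0,t]}=N_{[1,t]}$; the $k$-generating property guarantees such a $t$ within the first $k$ collisions (this is where the factor $2k$ originates: $t$ steps to synchronize plus at most $k$ further steps to span). For $n>t$ the fronts are synchronized, $\varepsilon$-spanning now bounds $\angle(\eta_i,V_{[0,i-1]})$ directly, and every subsequent front-growing step contributes a factor $\ge\rho_\ast$ in a direction transverse to $v_0^\perp\cap V_{[0,t]}$; by $k$-generating these transverse directions exhaust all of $v_0^\perp$ within $k$ steps. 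For the directions inside $v_0^\perp\cap V_{[0,t]}$ I would instead use the pre-collinearity segment, controlling the cumulative transversality of the velocity front via Lemma~\ref{anglo cumulativo} (lower bounds of the form $(\sin\varepsilon)^{m}$) together with the minimality of the collinearity to certify expansion along the front up to time $t$. Since $t+k\le 2k$ and the norms are monotone, $L_{[0,2k]}$ then expands every vector of $v_0^\perp$.

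Finally, to turn the per-direction gains into the single uniform constant $\sigma=\sigma(\varepsilon,d,f)>1$, I would package the above into the hypotheses of Lemma~\ref{main:lin:alg}: a composition of maps each isometric off a one-dimensional expanding direction, with expansion factors bounded below by $\rho_\ast$, expanding directions that are uniformly transverse (the $\varepsilon$- and $(\sin\varepsilon)^{m}$-bounds above), and the guarantee that within $2k$ steps the expanding directions span the whole space. Its conclusion yields $\minexp(L_{[0,2k]})\ge\sigma$, which is the asserted inequality. The delicate uniformity over all trajectories---needed because the billiard phase space is not compact---would be secured by working in the compact trajectory space $\mathscr{T}_{f,P}$ and replacing exact collinearities by $\delta$-collinearities, so that the bounds survive the passage to the infimum.
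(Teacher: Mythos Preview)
Your proposal is correct and follows essentially the same architecture as the paper's proof: locate a collinearity to synchronize velocity and normal fronts, obtain expansion on $v_0^\perp\cap V_{[0,t]}$ up to that moment via Lemma~\ref{main:lin:alg}, obtain relative expansion thereafter (this is Proposition~\ref{rel:expans}), glue with Lemma~\ref{abstr:expans:lemma}, and pass to $\delta$-collinearities in the compact space $\mathscr{T}_{f,P}$ for uniformity. The one substantive deviation is your collinearity notion: what you call a ``minimal collinearity'' (the first $t$ with $v_0\in N_{[1,t]}$) is \emph{not} the paper's Definition~\ref{def minimal collinearity} (an interval $[i,j]\subseteq[0,k]$ with no proper sub-collinearity, possibly with $i>0$); the paper runs a three-case analysis on whether $[0,k]$ contains a $\delta$-collinearity and on its length, and works from the base point $i$ of a minimal one (Propositions~\ref{expans} and~\ref{rel:expans}), whereas your ``first synchronization time from $0$'' always exists by $k$-generating and avoids the case split. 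Both choices are sound---your first-$t$ condition still yields the dichotomy feeding Lemma~\ref{main:lin:alg} on the pre-$t$ segment, and the post-$t$ angle bound $\angle(\eta_i,V_{[0,i-1]})\gtrsim\varepsilon$ holds up to an $O(\delta)$ correction coming from the rogue $v_0$-direction (handled exactly as in the paper's length-zero case)---so this is a mild simplification rather than a gap, but do not conflate the two notions when citing the paper's intermediate statements, and note that Lemma~\ref{anglo cumulativo} is used in the post-collinearity relative-expansion step (Proposition~\ref{rel:expans}), not the pre-collinearity one.
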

The proof of this theorem is done at the end of the section.

\begin{remark}
From the previous theorem's conclusion, for any $n\geq 0$,
$$ \norm{L_{[0,n]}(v)}\geq \sigma^{\frac{n}{2 k}-1}\,\norm{v}  \quad \text{  all }\; v\in v_0^\perp\;. $$
This means, minimum growth expansion rate larger or equal than $\sigma^{\frac{1}{2 k}}>1$.
\end{remark}

\subsection{Properties of trajectories}
The following result says that the \textit{trajectory space} $\mathscr{T}$ is  compact. 

\begin{proposition} \label{traj:compact}
The space $\mathscr{T}$ is a closed subspace of the product space $(\Ss\times \mathscr{N}_P)^ {\Nn_0}$. In particular, with the induced topology
 $\mathscr{T}$ is a compact space.
\end{proposition}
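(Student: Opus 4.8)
The plan is to exhibit $\mathscr{T}$ as a closed subset of the compact space $(\Ss\times\mathscr{N}_P)^{\Nn_0}$, whence compactness is immediate. That the ambient space is compact follows from Tychonoff's theorem: $\Ss$ is the unit sphere and $\mathscr{N}_P=\{\eta_1,\dots,\eta_N\}$ is finite, so each factor $\Ss\times\mathscr{N}_P$ is compact, and a countable product of compact metric spaces is again compact and metrizable. By metrizability it is enough to argue sequentially: I would take a sequence of trajectories $\{(v_j^{(n)},\eta_j^{(n)})\}_{j\ge0}$ converging in the product topology to some $\{(v_j,\eta_j)\}_{j\ge0}$, and verify that conditions (1) and (2) of Definition~\ref{def trajectory} survive in the limit for each $j\in\Nn$.

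Convergence in the product topology is coordinatewise, so for every fixed $j$ one has $v_j^{(n)}\to v_j$ and $\eta_j^{(n)}\to\eta_j$; since $\mathscr{N}_P$ is finite, hence discrete, the second convergence forces $\eta_j^{(n)}=\eta_j$ for all large $n$. Condition (1), $\langle v_{j-1}^{(n)},\eta_j\rangle\le0$, is a closed inequality, so it passes to the limit by continuity of the inner product. For condition (2) I need the map $v\mapsto C_{\eta_j}(R_{\eta_j}(v))$ to be continuous, for then letting $n\to\infty$ in $v_j^{(n)}=C_{\eta_j}(R_{\eta_j}(v_{j-1}^{(n)}))$ gives exactly $v_j=C_{\eta_j}(R_{\eta_j}(v_{j-1}))$. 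Thus the whole statement reduces to checking this continuity.

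The one genuinely delicate point, and the step I expect to be the main obstacle, is continuity at grazing incidence. Condition (1) permits $\langle v_{j-1},\eta_j\rangle=0$, and then $R_{\eta_j}(v_{j-1})$ is tangent to the face, lying on $\partial\Ss^+_{\eta_j}$ where $C_{\eta_j}$ is a priori undefined. To overcome this I would use the explicit description in Proposition~\ref{prop contractive law}: writing $R_{\eta_j}(v)=\cos\theta\,\eta_j+\sin\theta\,w$ with $w\in\eta_j^\perp$ a unit vector and $\theta=\arccos\langle R_{\eta_j}(v),\eta_j\rangle\in[0,\pi/2]$, the formula gives $C_{\eta_j}(R_{\eta_j}(v))=\cos f(\theta)\,\eta_j+\sin f(\theta)\,w$. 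Because $\lambda(f)<1$, the contracting reflection law $f$ is Lipschitz on $[0,\pi/2)$, so it extends continuously to $[0,\pi/2]$ with $f(\pi/2)=\lim_{\theta\to\pi/2^-}f(\theta)\le\lambda(f)\,\pi/2<\pi/2$; this renders $v\mapsto C_{\eta_j}(R_{\eta_j}(v))$ continuous on the full closed half $\{v\in\Ss:\langle v,\eta_j\rangle\le0\}$, including the grazing boundary (the only other apparent singularity, at $\theta=0$, is harmless since the coefficient $\sin f(\theta)$ of the undefined direction $w$ vanishes there and $C_{\eta_j}(\eta_j)=\eta_j$). With this continuous extension, condition (2) passes to the limit as claimed. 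Since both defining conditions are preserved for every $j$, the limit lies in $\mathscr{T}$, so $\mathscr{T}$ is closed and therefore compact.
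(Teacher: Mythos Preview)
Your proof is correct and follows the same approach as the paper, which simply asserts that conditions (1) and (2) of Definition~\ref{def trajectory} are closed and invokes Tychonoff. Your treatment is in fact more careful than the paper's: you explicitly address the continuous extension of $C_{\eta_j}\circ R_{\eta_j}$ to the grazing boundary $\langle v_{j-1},\eta_j\rangle=0$, a point the paper leaves implicit.
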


\begin{proof} The trajectory space  $\mathscr{T}$ is closed in the product space 
because conditions (1) and (2) in Definition~\ref{def trajectory} are closed conditions. By Thychonoff's theorem  $(\Ss\times \mathscr{N}_P)^ {\Nn_0}$ is compact,
and hence $\mathscr{T}$ is compact too. 
\end{proof}

 \begin{lemma}\label{lem alphabeta}
Given any trajectory $\{(v_j,\eta_j)\}_{j\geq0}$ there exist scalars $\alpha_j,\beta_j\in\Rr$ such that for any $j\geq1$,
$$
v_j=\alpha_j \eta_j+\beta_j v_{j-1}
$$
where 
$$
\cos\left(\frac\pi2\lambda(f)\right)<\alpha_j<2\quad\text{and}\quad 0\leq \beta_j<1.
$$
Moreover, 
$$\left|\frac{\langle v_j,\eta_j\rangle}{\langle v_{j-1},\eta_j\rangle}\right|=\frac{\cos f(\theta_j)}{\cos\theta_j}\quad\text{where}\quad \theta_j=\arccos|\langle v_{j-1},\eta_j\rangle|.$$
\end{lemma}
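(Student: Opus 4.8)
The plan is to recall the basic structure of a single contracting reflection and read off the coefficients $\alpha_j$ and $\beta_j$ directly. By Definition~\ref{def trajectory}, $v_j = C_{\eta_j}\circ R_{\eta_j}(v_{j-1})$. First I would set $u_{j-1} := R_{\eta_j}(v_{j-1}) = v_{j-1} - 2\langle v_{j-1},\eta_j\rangle\,\eta_j$ and write $\theta_j = \arccos\langle u_{j-1},\eta_j\rangle = \arccos|\langle v_{j-1},\eta_j\rangle|$, using that reflection flips the sign of the normal component while condition (1) gives $\langle v_{j-1},\eta_j\rangle\le 0$, so $\langle u_{j-1},\eta_j\rangle\ge 0$. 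Applying Proposition~\ref{prop contractive law}(c) to $u_{j-1}\in\Ss^+_{\eta_j}$ expresses $v_j = C_{\eta_j}(u_{j-1})$ as a combination of $P_{\eta_j}(u_{j-1})$ and $P_{\eta_j^\perp}(u_{j-1})$. Since $P_{\eta_j^\perp}(u_{j-1}) = P_{\eta_j^\perp}(v_{j-1})$ (reflection fixes the tangential part) and $P_{\eta_j}(u_{j-1}) = -P_{\eta_j}(v_{j-1})$, I can substitute $P_{\eta_j^\perp}(v_{j-1}) = v_{j-1} - \langle v_{j-1},\eta_j\rangle\eta_j$ to collect everything into the form $\alpha_j\eta_j + \beta_j v_{j-1}$.

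Carrying this out, the coefficient of $v_{j-1}$ is $\beta_j = \tfrac{\sin f(\theta_j)}{\sin\theta_j}$ and the coefficient of $\eta_j$ is $\alpha_j = \cos f(\theta_j) + \langle v_{j-1},\eta_j\rangle\bigl(\tfrac{\sin f(\theta_j)}{\sin\theta_j} - \tfrac{\cos f(\theta_j)}{\cos\theta_j}\cdot(\text{sign adjustment})\bigr)$; I would simplify using $\langle v_{j-1},\eta_j\rangle = -\cos\theta_j$. The cleanest route is to compute $\langle v_j,\eta_j\rangle$ and $\langle v_j, v_{j-1}\rangle$ and solve the resulting $2\times 2$ linear system for $\alpha_j,\beta_j$; since $\eta_j$ and $v_{j-1}$ are generically independent, this determines them uniquely. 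The bound $0\le\beta_j<1$ is then immediate from $\beta_j = \sin f(\theta_j)/\sin\theta_j$ together with the fact that a contraction satisfies $f(\theta)\le\theta$ (so $\sin f(\theta_j)\le\sin\theta_j$) and $f\ge 0$; I would justify $f(\theta)\le\theta$ from $f(0)=0$ and $|f'|<1$ via the mean value theorem.

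For the lower bound on $\alpha_j$, after simplification $\alpha_j = \tfrac{\cos f(\theta_j)}{\cos\theta_j}$ times a factor, and I expect the estimate $\alpha_j > \cos\bigl(\tfrac{\pi}{2}\lambda(f)\bigr)$ to come from bounding $f(\theta_j)\le \lambda(f)\,\theta_j \le \tfrac{\pi}{2}\lambda(f)$ (again by the mean value theorem and $\theta_j<\pi/2$), so $\cos f(\theta_j)\ge\cos\bigl(\tfrac{\pi}{2}\lambda(f)\bigr)$, combined with $\cos f(\theta_j)/\cos\theta_j>1$. The upper bound $\alpha_j<2$ should follow from crude estimates on the two trigonometric ratios on $[0,\pi/2)$. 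Finally, the displayed ratio $\left|\tfrac{\langle v_j,\eta_j\rangle}{\langle v_{j-1},\eta_j\rangle}\right| = \tfrac{\cos f(\theta_j)}{\cos\theta_j}$ is exactly the content of the last assertion of Proposition~\ref{prop derivative in Jacobi coordinates}, so I would simply cite it (or recompute $\langle v_j,\eta_j\rangle = \cos f(\theta_j)$ and $\langle v_{j-1},\eta_j\rangle = -\cos\theta_j$).

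The main obstacle is purely bookkeeping: tracking signs correctly through the reflection $R_{\eta_j}$ and the decomposition in Proposition~\ref{prop contractive law}, and making sure the $\eta_j$-coefficient collapses to a clean form rather than a messy mix of $\cos f$, $\cos\theta$, $\sin f$, $\sin\theta$ terms. I anticipate that the sharp constant $\cos\bigl(\tfrac{\pi}{2}\lambda(f)\bigr)$ requires the uniform Lipschitz bound $\lambda(f)=\sup|f'|<1$ rather than just $f(\theta)\le\theta$, so the key quantitative input is the mean value estimate $f(\theta_j)\le\lambda(f)\,\theta_j$; everything else is elementary trigonometry on the compact-in-the-limit interval $[0,\pi/2)$.
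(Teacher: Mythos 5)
Your proposal is correct and follows essentially the same route as the paper: apply Proposition~\ref{prop contractive law} to $R_{\eta_j}(v_{j-1})$, use that the reflection negates the normal component and fixes the tangential one, and read off $\beta_j=\sin f(\theta_j)/\sin\theta_j$ and $\alpha_j=\bigl(\tfrac{\cos f(\theta_j)}{\cos\theta_j}+\tfrac{\sin f(\theta_j)}{\sin\theta_j}\bigr)\cos\theta_j$, with the bounds coming from $f(\theta_j)\le\lambda(f)\,\theta_j<\tfrac{\pi}{2}\lambda(f)$. The only looseness is your unresolved ``sign adjustment'' in the $\eta_j$-coefficient; once $\langle v_{j-1},\eta_j\rangle=-\cos\theta_j$ is substituted it collapses to $\alpha_j=\cos f(\theta_j)+\sin f(\theta_j)\cot\theta_j\ge\cos f(\theta_j)$, which yields the stated lower bound exactly as you anticipate.
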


\begin{proof}
According to Proposition \ref{prop contractive law},
$$
v_j = (a_j+b_j)\cos\theta_j\,\eta_j+b_j v_{j-1}
$$
where 
$$
a_j=\frac{\cos f(\theta_j)}{\cos\theta_j},\quad b_j=\frac{\sin f(\theta_j)}{\sin\theta_j}\quad\text{and}\quad\theta_j = \arccos|\langle v_{j-1},\eta_j\rangle|.
$$
Since $\lambda(f)<1$, we have $1\leq a_j+b_j<2$ and $0\leq b_j<1$. Moreover, $\cos\theta_j>\cos(\frac\pi2\lambda(f))$. The last claim is a simple computation.
\end{proof}

\begin{lemma} \label{vel:front} 
Given a trajectory $\{(v_l,\eta_l)\}_{l\geq0}$,
for all intervals $[i,j]$,
\begin{enumerate}
\item $V_{[i,j]} =
 \linspan{ v_i} + N_{[i+1,j]}$ and ${V_{[i,j]}}^\perp \subseteq v_i^\perp\cap v_j^\perp$.
\item $L_{[i,j]}:v_i^\perp \to v_j^\perp$ is the identity on ${V_{[i,j]}}^\perp$. 
\item $\minexp(L_{[i,j]})\geq 1$.
\end{enumerate}
\end{lemma}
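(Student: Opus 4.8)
The plan is to anchor all three statements on the single linear relation $v_j=\alpha_j\eta_j+\beta_j v_{j-1}$ supplied by Lemma~\ref{lem alphabeta}, in which $\alpha_j\neq 0$ (indeed $\alpha_j>\cos(\tfrac{\pi}{2}\lambda(f))>0$) and $0\leq\beta_j<1$; these two sign facts are exactly what each item needs. For item (1) I would argue the two inclusions separately. The inclusion $V_{[i,j]}\subseteq\linspan{v_i}+N_{[i+1,j]}$ follows by induction on $l$: if $v_{l-1}\in\linspan{v_i}+N_{[i+1,l-1]}$, then the relation $v_l=\alpha_l\eta_l+\beta_l v_{l-1}$ places $v_l$ in $\linspan{v_i}+N_{[i+1,l]}$. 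Conversely, solving the same relation for $\eta_l=\alpha_l^{-1}(v_l-\beta_l v_{l-1})$—legitimate because $\alpha_l\neq 0$—shows $\eta_l\in V_{[i,j]}$ for every $l\in[i+1,j]$, while $v_i\in V_{[i,j]}$ trivially, giving the reverse inclusion. The second assertion ${V_{[i,j]}}^\perp\subseteq v_i^\perp\cap v_j^\perp$ is immediate, since $v_i,v_j\in V_{[i,j]}$.

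For item (2) I would take $w\in{V_{[i,j]}}^\perp$ and verify that each factor $L_l$ (for $i+1\leq l\leq j$) fixes $w$. By item (1) we have $\eta_l\in N_{[i+1,j]}\subseteq V_{[i,j]}$ and $v_l\in V_{[i,j]}$, hence $\langle w,\eta_l\rangle=0$ and $\langle w,v_l\rangle=0$. The first equality makes the parallel projection $P_{v_{l-1},\eta_l^\perp}$ act as the identity on $w$, and the second makes the orthogonal projection $P_{v_l^\perp}$ act as the identity on $w$; therefore $L_l(w)=w$, and composing over $l$ gives $L_{[i,j]}(w)=w$.

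Item (3) is the computational core, and here I would prove the stronger pointwise bound $\norm{L_l(w)}\geq\norm{w}$ for every $w\in v_{l-1}^\perp$ and then iterate. Set $w':=P_{v_{l-1},\eta_l^\perp}(w)=w-\tfrac{\langle w,\eta_l\rangle}{\langle v_{l-1},\eta_l\rangle}\,v_{l-1}$, which is well defined because $\langle v_{l-1},\eta_l\rangle\neq 0$ (the grazing angle $\theta_l=\pi/2$ lies outside the domain of $f$, so condition (2) of Definition~\ref{def trajectory} forces $\langle v_{l-1},\eta_l\rangle<0$). Using $\langle w,v_{l-1}\rangle=0$ one gets $\norm{w'}^2=\norm{w}^2+\langle w,\eta_l\rangle^2/\langle v_{l-1},\eta_l\rangle^2$. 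Substituting $v_l=\alpha_l\eta_l+\beta_l v_{l-1}$ collapses the cross terms and yields the clean expression $\langle w',v_l\rangle=-\beta_l\,\langle w,\eta_l\rangle/\langle v_{l-1},\eta_l\rangle$. Since $L_l(w)=P_{v_l^\perp}(w')=w'-\langle w',v_l\rangle v_l$, this gives
$$\norm{L_l(w)}^2=\norm{w'}^2-\langle w',v_l\rangle^2=\norm{w}^2+(1-\beta_l^2)\frac{\langle w,\eta_l\rangle^2}{\langle v_{l-1},\eta_l\rangle^2}\geq\norm{w}^2,$$
the inequality holding because $0\leq\beta_l<1$. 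Iterating $\norm{L_l(\cdot)}\geq\norm{\cdot}$ along $L_{[i,j]}=L_j\circ\cdots\circ L_{i+1}$ gives $\norm{L_{[i,j]}(w)}\geq\norm{w}$ for all $w\in v_i^\perp$, that is $\minexp(L_{[i,j]})\geq 1$.

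I expect the main obstacle to be the algebraic simplification in item (3), specifically checking that $\langle w',v_l\rangle$ reduces to the single term $-\beta_l\langle w,\eta_l\rangle/\langle v_{l-1},\eta_l\rangle$; once that cancellation is secured the inequality drops out immediately, with the bound $\beta_l<1$ from Lemma~\ref{lem alphabeta} providing precisely the needed positivity. Items (1) and (2), by contrast, are essentially bookkeeping built on the same linear relation and require no estimates.
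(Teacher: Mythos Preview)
Your proof is correct and is precisely the ``straightforward computation'' that the paper invokes without details. The inductive use of $v_l=\alpha_l\eta_l+\beta_l v_{l-1}$ (with $\alpha_l\neq 0$) for item~(1), the verification that each factor $L_l$ fixes $V_{[i,j]}^\perp$ for item~(2), and the direct norm estimate $\norm{L_l(w)}^2=\norm{w}^2+(1-\beta_l^2)\langle w,\eta_l\rangle^2/\langle v_{l-1},\eta_l\rangle^2$ for item~(3) are all sound; the algebraic cancellation you flag as the potential obstacle in item~(3) checks out exactly as written.
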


\begin{proof}
Straightforward computation.
\end{proof}

\subsection{Collinearities}
Throughout the rest of this section, we assume that $\varepsilon>0$ is fixed and that $P$ is $\varepsilon$-spanning.

Consider a trajectory  $\{(v_l,\eta_l)\}_{l\geq 0}$ in
$\mathscr{T}$.

\begin{defn}
\label{def collinearity}
A time  interval $[i,j]$  is called
a {\em collinearity} of the trajectory $\{(v_l,\eta_l)\}_{l\geq0}$ if its velocity and the normal fronts along the time interval $[i,j]$ coincide, i.e. $V_{[i,j]} = N_{[i,j]}$. The number
$j-i$ will be referred as the length of the collinearity $[i,j]$.
\end{defn}

\begin{defn}
\label{def minimal collinearity}
A collinearity is called {\em minimal} if it contains no
smaller subinterval which is itself a collinearity.
\end{defn}
For instance, if $v_i\in\linspan{\eta_i}$ then $\{i\}$ is
a minimal collinearity of length $0$.

\begin{proposition} \label{colin:charact} 
Given a trajectory  $\{(v_l,\eta_l)\}_{l\geq0}$,  assume  $v_i\in N_{[i,j]}$ with $i\leq j$.
Then there is some $i'\in [i,j]$ such that
 the time interval $[i',j]$ is a collinearity.
\end{proposition}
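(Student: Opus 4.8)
The plan is to argue by induction on the length $j-i$ of the time interval. The two workhorses will be the identity $V_{[i,j]}=\linspan{v_i}+N_{[i+1,j]}$ from Lemma~\ref{vel:front}, which relates a velocity front to a shorter normal front, and the scalar recursion $v_l=\alpha_l\,\eta_l+\beta_l\,v_{l-1}$ from Lemma~\ref{lem alphabeta}, which lets the containment hypothesis be pushed forward in time. The guiding idea is that whenever $v_i\in N_{[i,j]}$, either this already forces the two fronts to coincide at $i$, or else the hypothesis survives at the next instant and one recurses.

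For the base case $i=j$ one has $N_{[i,i]}=\linspan{\eta_i}$, so $v_i\in N_{[i,i]}$ gives $V_{[i,i]}=\linspan{v_i}=\linspan{\eta_i}=N_{[i,i]}$ and $[i,i]$ is itself a collinearity. For the inductive step I would split according to whether $v_i$ lies in the shorter front $N_{[i+1,j]}$. If $v_i\in N_{[i+1,j]}$, then since $\eta_{i+1}\in N_{[i+1,j]}$ as well, the recursion gives $v_{i+1}=\alpha_{i+1}\,\eta_{i+1}+\beta_{i+1}\,v_i\in N_{[i+1,j]}$, so the inductive hypothesis applied to the strictly shorter interval $[i+1,j]$ produces the desired $i'\in[i+1,j]\subseteq[i,j]$. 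If instead $v_i\notin N_{[i+1,j]}$, I would write $v_i=c\,\eta_i+n$ with $n\in N_{[i+1,j]}$, which is possible because $v_i\in N_{[i,j]}=\linspan{\eta_i}+N_{[i+1,j]}$; the assumption forces $c\neq0$, hence $\eta_i=c^{-1}(v_i-n)\in\linspan{v_i}+N_{[i+1,j]}=V_{[i,j]}$. This gives $N_{[i,j]}\subseteq V_{[i,j]}$, while the reverse inclusion $V_{[i,j]}\subseteq N_{[i,j]}$ is immediate from $v_i\in N_{[i,j]}$ and $N_{[i+1,j]}\subseteq N_{[i,j]}$, so $[i,j]$ is itself a collinearity and $i'=i$ works.

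The step I expect to be the real obstacle is the degenerate possibility $v_i\in N_{[i+1,j]}$, which is exactly why the cleaner single-step statement ``$V_{[i',j]}=N_{[i',j]}$ precisely when $v_{i'}\in N_{[i',j]}$'' breaks down: in that degenerate case the velocity front $V_{[i,j]}$ collapses onto $N_{[i+1,j]}$ and need not contain $\eta_i$, so equality of the fronts can fail at $i$ even though $v_i\in N_{[i,j]}$. The role of the induction is precisely to absorb this degeneracy by advancing the left endpoint, trading the failed equality at $i$ for the genuinely usable hypothesis $v_{i+1}\in N_{[i+1,j]}$ at $i+1$; since the interval strictly shrinks at each advance, the process terminates, at the latest at $i'=j$.
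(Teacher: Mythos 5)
Your proof is correct and follows essentially the same route as the paper: induction on $j-i$, with the dichotomy $v_i\notin N_{[i+1,j]}$ (giving $\eta_i\in V_{[i,j]}$, hence a collinearity at $[i,j]$ via Lemma~\ref{vel:front}) versus $v_i\in N_{[i+1,j]}$ (pushing the hypothesis to $v_{i+1}$ via Lemma~\ref{lem alphabeta} and recursing on $[i+1,j]$). Your phrasing of the case split as membership in $N_{[i+1,j]}$ rather than non-vanishing of a coefficient $\lambda_i$ is a minor, slightly cleaner variant that also avoids the paper's unnecessary sub-case $\beta_{i+1}=0$.
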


\begin{proof}
The proof goes by induction on the length $r=j-i$.
If the length is $0$ then $i=j$ and we have necessarily  $v_i\in\linspan{\eta_i}$, in which case it is obvious that $[i,i]=\{i\}$
is a collinearity.
Assume now that the statement holds for all time intervals
of length less than $r$, and let 
$v_i=\lambda_i\eta_i+\cdots+ \lambda_j \eta_j$ with $j-i=r$.
We consider two cases:

First suppose that   $\lambda_i\neq 0$. By item  (1) of Lemma~\ref{vel:front},
$$
V_{[i,j]} = \linspan{v_i} + N_{[i+1,j]} \subseteq N_{[i,j]}.
$$
Conversely, because $\lambda_i\neq 0$ we have $\eta_i\in \linspan{v_i} + N_{[i+1,j]}$
which proves that
$$
N_{[i,j]} \subseteq \linspan{v_i} + N_{[i+1,j]} = V_{[i,j]},
$$
where in the last equality we have used again  item  (1) of Lemma~\ref{vel:front}. Therefore, $[i,j]$ is a collinearity in this case.

Assume next that $\lambda_i=  0$. By Lemma~\ref{lem alphabeta}, there are scalars $\alpha_{i+1}$ and $\beta_{i+1}$ such that $v_{i+1}=\alpha_{i+1}\eta_{i+1}+\beta_{i+1}v_i$. We may assume that $\beta_{i+1} \neq 0$. Otherwise $v_{i+1}\in\linspan{\eta_{i+1}}$ and $[i+1,j]$ is a collinearity. Thus
$$  \lambda_{i+1} \eta_{i+1} + \ldots + \lambda_j \eta_j = v_i = \frac{1}{\beta_{i+1}}\left(  v_{i+1} - \alpha_{i+1} \eta_{i+1}\right)\;. $$
In this case 
$$ v_{i+1}=\beta_{i+1}\,\left[ \, \left(\lambda_{i+1}-\frac{\alpha_{i+1}}{\beta_{i+1}}\right)\eta_{i+1} + \lambda_{i+2} \eta_{i+2} + \ldots + \lambda_j \eta_j\, \right] \;.$$
and the conclusion follows by the induction hypothesis applied to the time interval $[i+1,j]$ of length $p-1$.
\end{proof}

\begin{proposition} \label{colin:properties}Given a trajectory  $\{(v_l,\eta_l)\}_{l\geq0}$ and $i<j\leq j'$ the following holds:
\begin{enumerate}
\item If   $[i,j]$ is a collinearity then
$[i,j']$ is also a collinearity.
\item If  $v_j\in V_{[i,j-1]}$ and
$\eta_j\notin N_{[i,j-1]}$,
then there is some $i< i' \leq j$ such that 
$[i',j]$ is a collinearity.
\end{enumerate}
\end{proposition}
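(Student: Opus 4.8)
The plan is to handle the two statements separately, reducing each to two tools already at hand: Lemma~\ref{lem alphabeta}, which expresses every velocity as $v_l = \alpha_l\eta_l + \beta_l v_{l-1}$ with $\alpha_l \neq 0$, and Lemma~\ref{vel:front}(1), which gives $V_{[i,j]} = \linspan{v_i} + N_{[i+1,j]}$.

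For part (1) I would induct on $j'-j$, so that it suffices to prove the one-step claim that a collinearity $[i,j]$ forces $[i,j+1]$ to be a collinearity. Writing $v_{j+1} = \alpha_{j+1}\eta_{j+1} + \beta_{j+1}v_j$ with $\alpha_{j+1}\neq 0$, and using that $v_j \in V_{[i,j]} = N_{[i,j]}$ by hypothesis, this single relation yields both $v_{j+1} \in N_{[i,j]} + \linspan{\eta_{j+1}} = N_{[i,j+1]}$ and, after solving for $\eta_{j+1}$, $\eta_{j+1} \in \linspan{v_j,v_{j+1}} \subseteq V_{[i,j+1]}$. The first inclusion gives $V_{[i,j+1]} \subseteq N_{[i,j+1]}$ and the second gives the reverse containment, so $[i,j+1]$ is a collinearity and the induction closes.

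For part (2) the key idea is to arrange the right input for Proposition~\ref{colin:charact}. First I would deduce $\eta_j \in V_{[i,j-1]}$ from the hypothesis $v_j \in V_{[i,j-1]}$, again via $v_j = \alpha_j\eta_j + \beta_j v_{j-1}$ with $\alpha_j \neq 0$. Using Lemma~\ref{vel:front}(1) I write $\eta_j = c\,v_i + n$ with $n \in N_{[i+1,j-1]}$; the hypothesis $\eta_j \notin N_{[i,j-1]}$ forces $c \neq 0$, and solving for $v_i$ gives $v_i \in \linspan{\eta_j} + N_{[i+1,j-1]} = N_{[i+1,j]}$. Applying Lemma~\ref{lem alphabeta} once more then promotes this to $v_{i+1} = \alpha_{i+1}\eta_{i+1} + \beta_{i+1}v_i \in N_{[i+1,j]}$, and Proposition~\ref{colin:charact} applied to the interval $[i+1,j]$ produces some $i' \in [i+1,j]$ with $[i',j]$ a collinearity, giving $i < i' \leq j$.

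The hard part is precisely securing the strict inequality $i' > i$ in part (2). Invoking Proposition~\ref{colin:charact} directly on $[i,j]$ would only guarantee $i' \geq i$, and $[i,j]$ itself generally fails to be a collinearity, since its velocity front stays fixed from step $j-1$ to $j$ (because $v_j \in V_{[i,j-1]}$) while its normal front strictly grows (because $\eta_j \notin N_{[i,j-1]}$). Shifting the base index by one — establishing $v_i \in N_{[i+1,j]}$ and then $v_{i+1} \in N_{[i+1,j]}$, so that the characterization is applied on $[i+1,j]$ — is exactly what forces $i' \geq i+1$. I would finally check the degenerate boundary case $j = i+1$, where $N_{[i+1,j-1]}$ is the zero subspace; the same chain of implications survives and delivers $i' = i+1 = j$.
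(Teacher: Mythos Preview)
Your proof is correct and, in substance, the same as the paper's. Part~(2) is identical: deduce $\eta_j\in V_{[i,j-1]}$ from Lemma~\ref{lem alphabeta}, use Lemma~\ref{vel:front}(1) to write $\eta_j=c\,v_i+n$ with $n\in N_{[i+1,j-1]}$, invoke $\eta_j\notin N_{[i,j-1]}$ to force $c\neq 0$, conclude $v_i\in N_{[i+1,j]}$, bump up to $v_{i+1}\in N_{[i+1,j]}$, and hand off to Proposition~\ref{colin:charact}. For part~(1) the only difference is cosmetic: the paper applies Lemma~\ref{vel:front}(1) once to get $V_{[i,j']}=\linspan{v_i}+N_{[i+1,j']}=N_{[i,j]}+N_{[j+1,j']}=N_{[i,j']}$ in a single line, whereas you run the same calculation step by step via induction on $j'-j$.
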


\begin{proof} 
Let $i<j\leq j'$.
\begin{enumerate}
\item Assume $V_{[i,j]} = N_{[i,j]}$. Then by Lemma~\ref{vel:front},
$$
V_{[i,j']} = \linspan{v_i} + N_{[i+1,j]} +  N_{[j+1,j']} = N_{[i,j]} +  N_{[j+1,j']} = N_{[i,j']}.
$$
\item Assume now $v_j\in V_{[i,j-1]}$. By Lemma~\ref{lem alphabeta}, 
$$
\eta_j = \frac{1}{\alpha_j}\left(v_j-\beta_j v_{j-1}\right),
$$
where $\alpha_j\neq 0$. Thus $\eta_j \in V_{[i,j-1]}$. By Lemma~\ref{vel:front} we can write $\eta_j = \lambda_i v_i + u$ for some $u\in N_{[i+1,j-1]}$. By assumption, $\lambda_i\neq 0$. Thus $v_i \in N_{[i+1,j]}$. Again by Lemma~\ref{lem alphabeta}, we conclude that $v_{i+1}\in N_{[i+1,j]}$. Now the claim follows by Proposition~\ref{colin:charact}.
\end{enumerate} 
\end{proof}

\begin{corollary}\label{dichotomy}
Let $\{(v_l,\eta_l)\}_{l\geq 0}$ be a trajectory  
and   $k\geq i\geq 0$ be integers such that the time segment
 $[i,k]$  contains no subinterval which is a collinearity. 
Then for
every $j \in [i,k]$  either
\begin{enumerate}
\item  $\eta_j\in \{\eta_{i+1},\ldots,\eta_{j-1}\}$, or else 
\item  $v_j\notin V_{[i,j-1]}$.
\end{enumerate}
\end{corollary}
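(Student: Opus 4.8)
The plan is to argue by contradiction. Suppose some $j\in[i,k]$ violates both alternatives, so that $\eta_j\notin\{\eta_{i+1},\ldots,\eta_{j-1}\}$ while $v_j\in V_{[i,j-1]}$. The case $j=i$ automatically satisfies alternative (2), since then $V_{[i,i-1]}=\{0\}$ and $v_j\neq0$, so we may assume $j>i$. The first step is to transfer the information from the velocity $v_j$ to the normal $\eta_j$: by Lemma~\ref{lem alphabeta} we have $\eta_j=\alpha_j^{-1}(v_j-\beta_j v_{j-1})$ with $\alpha_j\neq0$, and since both $v_j$ and $v_{j-1}$ lie in $V_{[i,j-1]}$, this forces $\eta_j\in V_{[i,j-1]}$. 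Invoking Lemma~\ref{vel:front}(1) to write $V_{[i,j-1]}=\linspan{v_i}+N_{[i+1,j-1]}$, I would then decompose $\eta_j=c\,v_i+w$ with $w\in N_{[i+1,j-1]}$ and split into two cases according to whether $c$ vanishes.

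When $c\neq0$ the argument essentially reproduces Proposition~\ref{colin:properties}(2): solving for $v_i$ gives $v_i\in\linspan{\eta_j}+N_{[i+1,j-1]}\subseteq N_{[i+1,j]}$, whence $v_{i+1}=\alpha_{i+1}\eta_{i+1}+\beta_{i+1}v_i\in N_{[i+1,j]}$ by Lemma~\ref{lem alphabeta}. Proposition~\ref{colin:charact}, applied on the interval $[i+1,j]$, then produces some $i'$ with $i<i+1\leq i'\leq j\leq k$ such that $[i',j]$ is a collinearity contained in $[i,k]$, contradicting the hypothesis.

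The delicate case, and the main obstacle, is $c=0$, where $\eta_j\in N_{[i+1,j-1]}$ even though $\eta_j$ is not one of $\eta_{i+1},\ldots,\eta_{j-1}$; here Proposition~\ref{colin:properties}(2) does not apply, and one must bring in the spanning hypothesis. The key observation is that collinearity-freeness caps the dimension of the normal front: if one had $N_{[i+1,j-1]}=\Rr^d$, then Lemma~\ref{vel:front}(1) would give $V_{[i,j-1]}=\linspan{v_i}+N_{[i+1,j-1]}=\Rr^d=N_{[i,j-1]}$, making $[i,j-1]\subseteq[i,k]$ a collinearity. Hence $N_{[i+1,j-1]}\neq\Rr^d$, so by $\varepsilon$-spanning at most $d-1$ distinct normals occur among $\eta_{i+1},\ldots,\eta_{j-1}$. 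Adjoining the new normal $\eta_j$ gives at most $d$ distinct normals, which by Definition~\ref{def spanning} are linearly independent; this contradicts $\eta_j\in N_{[i+1,j-1]}=\linspan{\eta_{i+1},\ldots,\eta_{j-1}}$. Both cases being impossible, the dichotomy follows. I expect the only point needing care to be the bookkeeping that $\varepsilon$-spanning forces \emph{any} family of at most $d$ distinct normals to be linearly independent, which one obtains by padding a smaller family up to $d$ normals, using that a $d$-dimensional polytope has at least $d+1$ faces.
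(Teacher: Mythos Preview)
Your argument is correct. The case $c\neq 0$ is exactly the mechanism behind Proposition~\ref{colin:properties}(2), and the paper's own proof consists of the single sentence ``This corollary is a reformulation of item (2) of Proposition~\ref{colin:properties}.'' In that sense your core approach coincides with the paper's.

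Where you go further is in the case $c=0$. The paper's one-line proof, taken literally, only yields the dichotomy with alternative~(1) replaced by $\eta_j\in N_{[i,j-1]}$ (the linear span), since that is the hypothesis in Proposition~\ref{colin:properties}(2). You correctly observe that the corollary as stated asserts the stronger $\eta_j\in\{\eta_{i+1},\ldots,\eta_{j-1}\}$ (the set), and you close this gap by invoking the standing $\varepsilon$-spanning hypothesis: either $N_{[i+1,j-1]}=\Rr^d$, forcing $[i,j-1]$ to be a collinearity, or else at most $d-1$ distinct normals appear and the spanning property makes the new $\eta_j$ linearly independent from them. This is a genuine refinement over the paper's terse justification; the padding argument at the end is routine and correctly handled. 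For the applications later in the paper (Proposition~\ref{expans} via Lemma~\ref{main:lin:alg}) the weaker form with $\eta_j\in N_{[i+1,j-1]}$ would in fact suffice, but your version matches the statement as written.
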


\begin{proof}
This corollary is a reformulation of item (2) of Proposition~ \ref{colin:properties}.
\end{proof}
 
\subsection{Quantifying collinearities}

We are now going to prove quantified versions of 
Propositions~\ref{colin:charact},~\ref{colin:properties} and Corollary~\ref{dichotomy}. The following abstract continuity lemma will be useful.

\begin{lemma}\label{abstr:cont}
Let $\Xscr$ be a compact topological space and $f,g:\Xscr\to\Rr$ be continuous functions
such that $g(x)=0$ for all $x\in \Xscr$ with  $f(x)=0$.
Given $\delta>0$ there is $\delta'>0$ such that for all $x\in \Xscr$,
if $f(x)<\delta'$ then $g(x)<\delta$.
\end{lemma}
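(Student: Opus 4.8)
The plan is to prove this by contradiction, exploiting the compactness of $\Xscr$ together with the continuity of $f$ and $g$. The statement is an abstract $\varepsilon$–$\delta$ continuity fact: knowing that $g$ vanishes on the zero set of $f$, we want to conclude that $g$ is uniformly small whenever $f$ is uniformly small. This is essentially the uniform version of the pointwise implication ``$f=0 \Rightarrow g=0$'', and compactness is exactly what upgrades the pointwise hypothesis to the uniform conclusion.

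First I would suppose, for contradiction, that the conclusion fails for some fixed $\delta>0$. The negation of the statement is: for every $\delta'>0$ there exists $x\in\Xscr$ with $f(x)<\delta'$ but $g(x)\geq\delta$. Applying this with $\delta'=1/n$ for each $n\in\Nn$, I obtain a sequence $\{x_n\}_{n\geq 1}$ in $\Xscr$ satisfying
$$
f(x_n)<\frac{1}{n}\quad\text{and}\quad g(x_n)\geq\delta\qquad\text{for all } n\geq 1.
$$

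Next I would invoke compactness of $\Xscr$ to extract a convergent subnet (or, if $\Xscr$ is known to be metrizable, a convergent subsequence) $x_{n_m}\to x_\ast\in\Xscr$. Since $\Xscr$ is only assumed to be a compact topological space, the cleanest route is to pass to a convergent subnet, which exists by compactness. By continuity of $f$ we get $f(x_\ast)=\lim f(x_{n_m})=0$, since $f(x_n)<1/n\to 0$ forces the limit to be $0$. By the hypothesis of the lemma, $f(x_\ast)=0$ implies $g(x_\ast)=0$. On the other hand, by continuity of $g$ we have $g(x_\ast)=\lim g(x_{n_m})\geq\delta>0$, because every term of the net satisfies $g(x_{n_m})\geq\delta$. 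This contradicts $g(x_\ast)=0$, completing the proof.

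The only genuine subtlety is the choice of convergence mode: for a general compact topological space one should use subnets rather than subsequences to guarantee a convergent accumulation point, and the continuity of $f$ and $g$ is then applied along the net. In the application the relevant space is a product of spheres and normals, which is metrizable, so subsequences suffice and the argument is entirely standard; I would phrase it with subnets to keep full generality. No step here presents a real obstacle — the entire content is the compactness extraction, and everything else is an immediate consequence of continuity.
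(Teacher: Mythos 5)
Your proof is correct and is essentially the same as the paper's: argue by contradiction, extract a sequence with $f(x_n)<1/n$ and $g(x_n)\geq\delta$, pass to a convergent sub(net/sequence), and use continuity to contradict the hypothesis. Your remark about subnets is a minor improvement in rigour over the paper's subsequence phrasing (compactness alone does not give sequential compactness), but the underlying argument is identical.
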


\begin{proof}
Assume, to get a contradiction, that the claimed statement does not hold.
Then there is $\delta>0$ such that for all $n\in\Nn$ there is a point $x_n\in \Xscr$
with $f(x_n)<\frac{1}{n}$ and $g(x_n)\geq\delta$. Since $\Xscr$ is compact, by taking a subsequence
we can assume $x_n\to x$ in $\Xscr$. By continuity of $f$ and $g$,
$f(x)=0$ and $g(x)\geq \delta$, which contradicts the lemma hypothesis.
\end{proof}

\begin{defn}
\label{def delta collinearity}
Given $\delta>0$, we call  {\em $\delta$-collinearity} of a  trajectory  $\{(v_l,\eta_l)\}_l$
to any time  interval $[i,j]$ such that $\dim{V_{[i,j]}}=\dim{N_{[i,j]}}$  and 
$$\angle\left( V_{[i,j]}, N_{[i,j]} \right) <\delta \;.$$
\end{defn}

\begin{proposition}\label{delta:colin:charact}
Given $\delta>0$  there exists $\delta'>0$ such that for any trajectory  $\{(v_l,\eta_l)\}_l$ the following holds. If $$\angle\left( v_i, N_{[i,j]} \right)<\delta'$$ for some $0\leq i\leq j$, then there exists $i'\in [i,j]$ for which the time interval $[i',j]$ is a $\delta$-collinearity of the given trajectory.
\end{proposition}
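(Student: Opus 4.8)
The plan is to argue by contradiction, combining the qualitative Proposition~\ref{colin:charact} with the compactness of the trajectory space (Proposition~\ref{traj:compact}) in the spirit of the abstract continuity Lemma~\ref{abstr:cont}. Suppose the conclusion fails for some $\delta>0$: then for each $n\in\Nn$ there is a trajectory $t^{(n)}=\{(v^{(n)}_l,\eta^{(n)}_l)\}_l$ and indices $0\le i_n\le j_n$ with $\angle(v^{(n)}_{i_n},N^{(n)}_{[i_n,j_n]})<1/n$, yet no interval $[i',j_n]$ with $i'\in[i_n,j_n]$ is a $\delta$-collinearity. Since $\mathscr{T}$ is shift-invariant, I would first replace each $t^{(n)}$ by its $i_n$-shift and assume $i_n=0$ throughout, so that $\angle(v^{(n)}_0,N^{(n)}_{[0,j_n]})\to0$.

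The essential difficulty is that the window length $j_n$ is a priori unbounded, whereas the product topology only controls finitely many coordinates, so a naive limit loses all information near the right endpoint. The key to circumventing this is that the collinearity structure is governed by \emph{bounded} data. Indeed, $\mathscr{N}_P$ is finite, so every normal front $N_{[a,b]}$ is the span of a subset of $\mathscr{N}_P$ and takes only finitely many values; moreover, by Lemma~\ref{vel:front} one has $V_{[i',j]}=\linspan{v_{i'}}+N_{[i'+1,j]}$, so whether $[i',j]$ is a ($\delta$-)collinearity depends only on the triple $(v_{i'},\eta_{i'},N_{[i'+1,j]})$, not on the intermediate velocities. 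Finally, Lemma~\ref{lem alphabeta} writes $v_{l+1}=\alpha_{l+1}\eta_{l+1}+\beta_{l+1}v_l$ with $0\le\beta_{l+1}<1$; projecting onto $N_{[l+1,j]}^\perp$ along any maximal run of indices on which $N_{[l,j]}$ is constant shows that $\sin\angle(v_l,N_{[l,j]})$ is non-increasing on such a run. Hence only the at most $d$ positions at which the normal front changes are relevant, and I would compress the segment $[0,j_n]$ to the bounded data consisting of these $\le d$ change positions, the corresponding velocities, and the associated chain of normal fronts.

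With this compression in hand, I would pass to a subsequence along which the combinatorial type is constant — the number of front changes, the chain of fronts (finitely many possibilities), and the normals at the change positions are all fixed, $N^{(n)}_{[0,j_n]}=W$ — and along which the finitely many relevant velocities converge in $\Ss$, the initial one to some $v_0^*$ with $\angle(v_0^*,W)=0$, i.e. $v_0^*\in W$. This limiting compressed configuration realizes an honest trajectory segment, to which Proposition~\ref{colin:charact} applies and furnishes a collinearity in the limit. To transfer this to a $\delta$-collinearity of $t^{(n)}$ for large $n$, I would choose $i'$ to be the \emph{largest} index for which the limiting interval $[i',\cdot]$ is a collinearity: if the limit had $v_{i'}^*\in N^*_{[i'+1,\cdot]}$, then by Lemma~\ref{lem alphabeta} $v_{i'+1}^*$ would again lie in that front and Proposition~\ref{colin:charact}, applied to $[i'+1,\cdot]$, would produce a larger-index collinearity, contradicting maximality. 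Thus $v_{i'}^*$ is transverse to $N^*_{[i'+1,\cdot]}$, an open condition; since the combinatorial type is fixed, $\dim V^{(n)}_{[i',\cdot]}=\dim N^{(n)}_{[i',\cdot]}$ for large $n$ and $\angle(V^{(n)}_{[i',\cdot]},N^{(n)}_{[i',\cdot]})\to0$, so $[i',\cdot]$ is a genuine $\delta$-collinearity; extending its right endpoint to $j_n$ through Proposition~\ref{colin:properties}(1) contradicts the choice of $t^{(n)}$.

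I expect the main obstacle to be precisely the control of the unbounded window, i.e. making the compression rigorous: one must check that discarding the constant-front runs does not distort the collinearity data. This is where the $\varepsilon$-spanning hypothesis enters, bounding the transversality $\angle(\eta_l,N_{[l+1,j]})$ at each front change away from zero and thereby controlling, via Lemma~\ref{angle:lemma}, the angle between the velocity and normal fronts in terms of $\sin\angle(v_l,N_{[l,j]})$; the non-increase from Lemma~\ref{lem alphabeta} (using $\beta_{l+1}<1$) then guarantees that the error accumulated over the at most $d$ surviving pieces stays within a controlled, vanishing bound as $\delta'\to0$. The secondary difficulty, namely the discontinuity of $\angle(\cdot,\cdot)$ across changes of dimension, is exactly what the maximal-$i'$ (shortest collinearity) selection is designed to remove, since it forces the relevant inclusion to be strict and hence stable under the limit.
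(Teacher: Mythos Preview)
The paper's argument is far simpler than yours. After shifting to $[i,j]=[0,p]$, it defines the continuous functions $f_p(x)=\angle(v_0,N_{[0,p]})$ and $g_p(x)=\min_{0\le i\le p}\angle(V_{[i,p]},N_{[i,p]})$ on the compact space $\mathscr{T}$, observes that Proposition~\ref{colin:charact} gives $f_p=0\Rightarrow g_p=0$, and invokes Lemma~\ref{abstr:cont} once. There is no compression, no combinatorics of front changes, and no appeal to $\varepsilon$-spanning (which is not even hypothesised in this proposition). You are right that, read literally, this yields a $\delta'$ that may depend on the window length $p=j-i$; the paper does not address uniformity in $p$, and downstream the result is only used over intervals of bounded length inside the proof of Theorem~\ref{hyperb:main}, so the simpler argument suffices there.

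Your attempt to obtain the uniform-in-$p$ statement has a genuine gap. The compressed data at the $\le d$ front-change positions is \emph{not} a trajectory segment: consecutive change positions can be separated by arbitrarily many billiard steps, so the limiting velocities $v_{a_s}^*$ do not satisfy the recursion $v_{s+1}^*=C_{\eta}R_{\eta}(v_s^*)$, and Proposition~\ref{colin:charact} cannot be applied to them. Your monotonicity repair also fails: $\sin\angle(v_l,N_{[l,j]})$ is indeed non-increasing along a constant-front run, but at a front change the target space $N_{[l,j]}$ \emph{shrinks}, so this angle can jump \emph{up}; and the asserted control of $\angle(V_{[l,j]},N_{[l,j]})$ by $\sin\angle(v_l,N_{[l,j]})$ breaks exactly when $v_l$ lies near the smaller front $N_{[l+1,j]}$ (then $V_{[l,j]}\approx N_{[l+1,j]}$, which is far from $N_{[l,j]}$ regardless of how small $\angle(v_l,N_{[l,j]})$ is). If you insist on the uniform statement, a cleaner route than compactness-plus-compression is a direct quantitative induction that mimics the proof of Proposition~\ref{colin:charact}: at each of the at most $d$ indices where the front drops, dichotomise on whether the $\eta_i$-coefficient of (an approximation to) $v_i$ is bounded below, yielding an approximate collinearity, or small, in which case $v_i$ is close to the strictly smaller front $N_{[i+1,j]}$ and one recurses; the total loss is then absorbed over at most $d$ steps.
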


\begin{proof}
Notice that, because the space of trajectories $\mathscr{T}$ is shift invariant,
there is no loss of generality in assuming that $[i,j]=[0,p]$. 
For each $k\geq 0$, define the functions
$f_{k},g_{k}:\mathscr{T}\to\Rr$ by
\begin{align*}
f_k\left( \{(v_l,\eta_l)\}_{l} \right) &= \angle\left( v_0, N_{[0,k]} \right),\\ g_k\left(\{(v_l,\eta_l)\}_{l} \right)&=  \min_{0\leq i \leq k}\angle\left( V_{[i,k]}, N_{[i,k]} \right).
\end{align*}
These functions are clearly continuous.

Proposition~\ref{colin:charact}
shows that for all $x\in \mathscr{T}$ and $0\leq  k \leq p$,
$f_k\left(x \right)=0$ implies
$g_k\left(x\right)=0$.
Thus, given $\delta>0$, by Lemma~\ref{abstr:cont}, there exists $\delta'>0$ such that for any $0\leq k \leq p$ and $x\in \mathscr{T}$, 
\begin{equation*}
f_k\left(x \right)<\delta'\quad \Rightarrow\quad 
 g_k\left(x\right)<\delta\;.
\end{equation*}
\end{proof}

\begin{proposition}\label{delta:colin:properties} Given any trajectory  $\{(v_l,\eta_l)\}_{l}$, $i<j\leq j'$ and $\delta>0$ the following holds.
\begin{enumerate}
\item \label{delta item 1}  If $[i,j]$ is a $\delta$-collinearity, then
$[i,j']$ is  a $\delta'$-collinearity, where $\delta'=\arcsin(\frac{\sin\delta}{\sin\varepsilon} )$.
\item \label{delta item 2}  There exists $\delta'>0$ such that, if  
$$\angle(v_j, V_{[i,j-1]})<\delta'$$
 and $\eta_j\notin N_{[i,j-1]}$,
then there is some $i < i' \leq j$ such that 
$[i',j]$ is a $\delta$-collinearity.
\end{enumerate}
\end{proposition}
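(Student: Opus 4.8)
The plan is to prove the two items separately: item~(1) by a single direct application of Lemma~\ref{angle:lemma}, and item~(2) by the same compactness/continuity scheme already used in Proposition~\ref{delta:colin:charact}.

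For item~(1), I would first record the two exact front identities furnished by Lemma~\ref{vel:front}, namely $V_{[i,j']}=V_{[i,j]}+N_{[j+1,j']}$ and $N_{[i,j']}=N_{[i,j]}+N_{[j+1,j']}$. The idea is to apply Lemma~\ref{angle:lemma} exactly once, with $E=N_{[i,j]}$, $E'=V_{[i,j]}$, and $H$ a complement of $N_{[i,j]}$ inside $N_{[i,j']}$ spanned by a subfamily of the new normals $\{\eta_m : j<m\le j'\}$ that lie outside $N_{[i,j]}$, so that $N_{[i,j]}\oplus H=N_{[i,j']}$. Because $[i,j]$ is a $\delta$-collinearity we have $\dim E=\dim E'$, which is hypothesis~(1) of Lemma~\ref{angle:lemma}. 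Granting hypothesis~(2) (discussed below), the lemma gives $E+H=N_{[i,j']}$ and the bound $\sin\angle(V_{[i,j]}+H,N_{[i,j']})\le \sin\angle(V_{[i,j]},N_{[i,j]})/\sin\varepsilon<\sin\delta/\sin\varepsilon$. A short dimension count then identifies $V_{[i,j]}+H$ with $V_{[i,j']}$: hypothesis~(2) forces $H\cap V_{[i,j]}=\{0\}$, while $H\subseteq N_{[j+1,j']}\subseteq V_{[i,j']}$ and $V_{[i,j]}\subseteq V_{[i,j']}$ give the reverse inclusion and matching dimensions. Hence $\angle(V_{[i,j']},N_{[i,j']})<\delta'=\arcsin(\sin\delta/\sin\varepsilon)$, which is item~(1). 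Choosing $H$ as a complement rather than all of $N_{[j+1,j']}$ is essential, since repeated normals already lie in $N_{[i,j]}$ and would make the minimal angle vanish.

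The whole argument thus rests on hypothesis~(2) of Lemma~\ref{angle:lemma}, namely $\angle_{\min}(H,N_{[i,j]})\ge\varepsilon$, i.e. the $\varepsilon$-transversality of two normal fronts spanned by disjoint (hence, by $\varepsilon$-spanning, independent) families of normals. This is where Definition~\ref{def spanning} enters, and I expect it to be the main obstacle: one cannot simply invoke Definition~\ref{def spanning} generator by generator, as that per-generator estimate is lossy and degrades the factor $\sin\varepsilon$, whereas the stated constant $\arcsin(\sin\delta/\sin\varepsilon)$ demands the clean single factor. The key point to verify is therefore that $\varepsilon$-spanning genuinely upgrades from "each normal is $\varepsilon$-far from the span of the others" to "each unit vector of one normal front is $\varepsilon$-far from the other normal front."

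For item~(2) I would run the continuity argument of Proposition~\ref{delta:colin:charact}. After a shift, assume $[i,j]=[0,p]$. The crucial observation is that, by $\varepsilon$-spanning, the non-quantitative hypothesis $\eta_p\notin N_{[0,p-1]}$ is equivalent to the \emph{closed} condition $\angle(\eta_p,N_{[0,p-1]})\ge\varepsilon$ (completing a basis of $N_{[0,p-1]}$ by distinct normals and applying Definition~\ref{def spanning} yields this gap whenever $\eta_p\notin N_{[0,p-1]}$). Hence $\Xscr:=\{x\in\mathscr{T} : \angle(\eta_p,N_{[0,p-1]})\ge\varepsilon\}$ is a compact subspace of $\mathscr{T}$ by Proposition~\ref{traj:compact}. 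On $\Xscr$ set $f(x)=\angle(v_p,V_{[0,p-1]})$ and $g(x)=\angle(v_1,N_{[1,p]})$, defined exactly as the defect functions in Proposition~\ref{delta:colin:charact} and continuous for the same reason. The proof of Proposition~\ref{colin:properties}, item~(2), which derives $v_1\in N_{[1,p]}$ from $v_p\in V_{[0,p-1]}$ and $\eta_p\notin N_{[0,p-1]}$ (via Lemma~\ref{lem alphabeta}), shows that $f(x)=0\Rightarrow g(x)=0$ on $\Xscr$. Lemma~\ref{abstr:cont} then produces, for any prescribed $\delta''>0$, some $\delta'>0$ with $f<\delta'\Rightarrow g<\delta''$. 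Taking $\delta''$ to be the constant supplied by Proposition~\ref{delta:colin:charact} for the target $\delta$ on the interval $[1,p]$, the bound $\angle(v_1,N_{[1,p]})<\delta''$ yields some $1\le i'\le p$ for which $[i',p]$ is a $\delta$-collinearity, proving item~(2).
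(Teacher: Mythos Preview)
Your approach matches the paper's almost exactly. For item~(1) the paper also sets $H$ to be the span of the ``new'' normals $\eta_l$ with $j<l\le j'$ and $\eta_l\notin\{\eta_i,\ldots,\eta_j\}$, records $V_{[i,j']}=V_{[i,j]}+H$ and $N_{[i,j']}=N_{[i,j]}+H$, and applies Lemma~\ref{angle:lemma} once to obtain the factor $1/\sin\varepsilon$. The paper does \emph{not} pause to verify hypothesis~(2) of Lemma~\ref{angle:lemma}; it simply writes ``Hence by Lemma~\ref{angle:lemma}\ldots'' and moves on. So the obstacle you flag---upgrading the per-normal bound of Definition~\ref{def spanning} to $\angle_{\min}(H,N_{[i,j]})\ge\varepsilon$ for arbitrary $h\in H$---is not addressed in the paper either. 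Your diagnosis is accurate: this step is taken for granted, and the exact constant $\arcsin(\sin\delta/\sin\varepsilon)$ in the statement hinges on it. (For the downstream applications only the existence of \emph{some} $\delta'$ is used, which would survive a lossy per-generator estimate.)

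For item~(2) the paper runs the same compactness scheme but in one step rather than two: it sets
\[
f_k(x)=\angle(v_k,V_{[0,k-1]}),\qquad g_k(x)=\min_{1\le i\le k}\angle(V_{[i,k]},N_{[i,k]}),
\]
invokes Proposition~\ref{colin:properties}(2) to get $f_k=0\Rightarrow g_k=0$ on the set where $\eta_k\notin N_{[0,k-1]}$, and applies Lemma~\ref{abstr:cont} directly. Your route---taking $g(x)=\angle(v_1,N_{[1,p]})$ and then feeding the output into Proposition~\ref{delta:colin:charact}---is a minor variant that reaches the same conclusion; your explicit restriction to the closed set $\Xscr=\{\angle(\eta_p,N_{[0,p-1]})\ge\varepsilon\}$ is in fact cleaner than the paper's implicit handling of the side condition.
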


\begin{proof} 
Denote by $H$ the linear space spanned by the `new' normals
$\eta_l$ in the range $j<l\leq j'$, i.e., normals  which are not in $\{\eta_i,\ldots, \eta_j\}$.
 By definition of $H$ we have,
\begin{align*}
V_{[i,j']}  &= V_{[i,j]} + H \;, \\
N_{[i,j']} &= N_{[i,j]} + H \;. 
\end{align*}
Hence by Lemma~\ref{angle:lemma}, if $[i,j]$ is a $\delta$-collinearity,
\begin{align*}
 \sin  \angle\left(V_{[i,j']},  N_{[i,j']} \right)
& \leq \frac{1}{\sin\varepsilon} \sin  \angle\left(V_{[i,j]},  N_{[i,j]} \right)  \leq \frac{\sin\delta}{\sin\varepsilon} = \sin\delta' ,
\end{align*}
which proves that $[i,j']$ is a $\delta'$-collinearity.
This proves~\eqref{delta item 1}.

To prove item~\eqref{delta item 2} note that, as in the proof of Proposition~\ref{delta:colin:charact},
there is no loss of generality in assuming that $[i,j]=[0,p]$. 
Define the functions
$f_{k},g_{k}:\mathscr{T}\to\Rr$ by
\begin{align*}
f_k\left( \{(v_l,\eta_l)\}_{l} \right) &= \angle\left( v_k, V_{[0,k-1]} \right),\\ g_k\left(\{(v_l,\eta_l)\}_{l} \right)&=  \min_{1\leq i \leq k}\angle\left( V_{[i,k]}, N_{[i,k]} \right).
\end{align*}
These functions are clearly continuous.

Item (2) of Proposition~\ref{colin:properties}
shows that for every $x=\{(v_l,\eta_l)_l\}\in \mathscr{T}$ and for every $1\leq k \leq p$ for which $\eta_k\notin N_{[0,k-1]}$, $f_k\left(x \right)=0$ implies
$g_k\left(x\right)=0$.
Thus, given $\delta>0$, by Lemma~\ref{abstr:cont}, there exists $\delta'>0$ such that for every $x=\{(v_l,\eta_l)_l\}\in \mathscr{T}$ and for every $1\leq  k \leq p$ for which $\eta_k\notin N_{[0,k-1]}$, 
\begin{equation*}
f_k\left(x \right)<\delta'\quad \Rightarrow\quad 
 g_k\left(x\right)<\delta\;.
\end{equation*}
This proves~\eqref{delta item 2}.
\end{proof}

\begin{corollary}\label{delta:dichotomy}
Given $\delta>0$ there is $\delta'>0$ such that the following dichotomy holds.
Let $[i+1,j]$ be a time segment of a trajectory that contains no subinterval which is a $\delta$-collinearity
of that trajectory. Then for
every $l \in [i+1,j]$  either
\begin{enumerate}
\item  $\eta_l\in \{\eta_{i+1},\ldots,\eta_{l-1}\}$, or else 
\item  $\angle( v_l, V_{[i,l-1]} )\geq \delta'$.
\end{enumerate}
\end{corollary}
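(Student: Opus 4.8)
The plan is to read Corollary~\ref{delta:dichotomy} as the quantitative contrapositive of item~\eqref{delta item 2} of Proposition~\ref{delta:colin:properties}, mirroring the way the qualitative Corollary~\ref{dichotomy} is the contrapositive of item (2) of Proposition~\ref{colin:properties}. Fix $\delta>0$ and let $\delta'>0$ be a threshold associated with $\delta$ by Proposition~\ref{delta:colin:properties}\eqref{delta item 2}. The assumption that $[i+1,j]$ contains no $\delta$-collinearity subinterval says exactly that for every $l\in[i+1,j]$ and every $i'$ with $i<i'\le l$ the interval $[i',l]$ is not a $\delta$-collinearity. This negates the conclusion of Proposition~\ref{delta:colin:properties}\eqref{delta item 2}, so its two hypotheses cannot hold simultaneously at index $l$: either $\angle(v_l,V_{[i,l-1]})\ge\delta'$, or $\eta_l\in N_{[i,l-1]}$.

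This produces the dichotomy $\eta_l\in N_{[i,l-1]}$ or $\angle(v_l,V_{[i,l-1]})\ge\delta'$, and the remaining task is to replace the linear-span alternative $\eta_l\in N_{[i,l-1]}$ by the combinatorial one stated in the corollary, that $\eta_l$ repeats an earlier normal among $\eta_{i+1},\dots,\eta_{l-1}$. Here I would invoke the $\varepsilon$-spanning hypothesis: by Definition~\ref{def spanning} no $d$ distinct face normals are linearly dependent, so as long as fewer than $d$ distinct normals have occurred in $[i,l-1]$ the front $N_{[i,l-1]}$ is the independent span of exactly those distinct normals, and a normal $\eta_l$ lying in it must coincide with one of them. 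Thus $\eta_l\in N_{[i,l-1]}$ forces $\eta_l\in\{\eta_{i+1},\dots,\eta_{l-1}\}$, using also that consecutive trajectory normals differ (ruling out the boundary index $\eta_i$). The genuinely delicate point is the complementary regime, when $d$ distinct normals have already appeared and $N_{[i,l-1]}=\Rr^d$; this must be shown either not to occur under the no-$\delta$-collinearity hypothesis or to be harmless for the application, and I expect it to require a short separate argument that a saturated normal front forces a collinearity on some admissible subinterval.

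The step I expect to be the real obstacle, however, is obtaining a single $\delta'$ valid uniformly in the index $l$ and in the interval length $l-i$. The compactness argument underlying Proposition~\ref{delta:colin:properties}\eqref{delta item 2} applies Lemma~\ref{abstr:cont} to pairs $(f_k,g_k)$ for $1\le k\le p$ and, taken literally, only yields a threshold depending on $p=l-i$, since a minimum of finitely many thresholds is taken. To make $\delta'$ independent of the length I would use the dimensional bound once more: since $N_{[i,\cdot]}$ lives in $\Rr^d$, the normal front strictly grows at most $d-1$ times, so the side condition $\eta_k\notin N_{[i,k-1]}$ of Proposition~\ref{delta:colin:properties}\eqref{delta item 2} holds at no more than $d-1$ of the indices $k$, regardless of $p$. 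Organising the compactness argument of Lemma~\ref{abstr:cont} around these boundedly many front-growth times — whose relevant data $(V_{[i,k-1]},v_k,\eta_k)$ ranges over a compact set by Proposition~\ref{traj:compact} and shift-invariance of $\mathscr{T}$ — rather than around $k$ itself, should produce a uniform $\delta'=\delta'(\delta,\varepsilon,d)$. With this uniform threshold the per-index dichotomy holds simultaneously for all $l\in[i+1,j]$, which is the assertion of the corollary.
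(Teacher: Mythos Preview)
Your approach matches the paper's exactly: the paper's entire proof is the single sentence ``This corollary is a reformulation of Proposition~\ref{delta:colin:properties}\,(2),'' i.e.\ precisely the contrapositive you describe in your first paragraph.

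Where you diverge is not in method but in scrupulousness. The two issues you flag --- upgrading $\eta_l\in N_{[i,l-1]}$ to the combinatorial alternative $\eta_l\in\{\eta_{i+1},\dots,\eta_{l-1}\}$, and securing a $\delta'$ that is uniform in the length $l-i$ --- are simply not addressed by the paper's one-line proof; the paper treats ``reformulation'' as self-evident. Your observations about these points are accurate (in particular, the proof of Proposition~\ref{delta:colin:properties}\,(2) as written does fix $p=j-i$ before invoking Lemma~\ref{abstr:cont}, so the uniformity is not literally established there), and the fixes you sketch via $\varepsilon$-spanning and the dimensional bound on front-growth times are reasonable. But you should be aware that the paper does not carry out any of this extra work: it regards the corollary as an immediate restatement and moves on, leaving the reader to supply exactly the kind of argument you outline if they want airtight uniformity.
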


\begin{proof}
This corollary is a reformulation of Proposition~ \ref{delta:colin:properties}\,(2).
\end{proof}

\subsection{Expansivity lemmas} 
Recall the map $L_{v,\eta,v'}$ defined in~\eqref{Lvetav'}.
The first lemma says 
that this map has two
singular values: $\lambda=1$ with multiplicity $d-1$,
and $\lambda= \abs{\langle v',\eta\rangle/ \langle v,\eta\rangle}$ with multiplicity $1$. See Figure~\ref{proj:comp}.

 \begin{figure}
  \begin{center}
    \includegraphics*[width=3in]{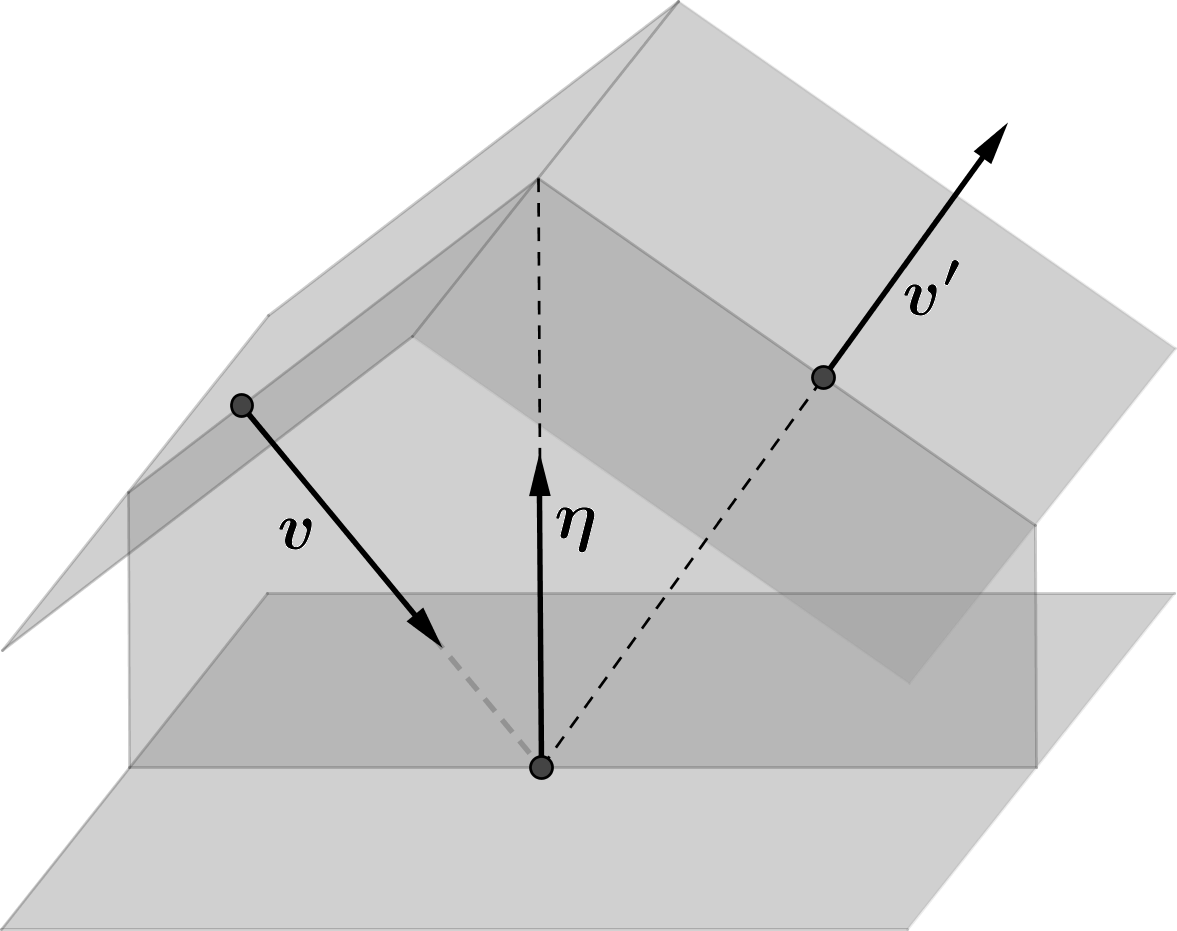}
  \end{center}
  \caption{Composition of the projections $P_{{v'}^ \perp}\circ P_{v,\eta^ \perp}$ }
  \label{proj:comp}
\end{figure}

\begin{lemma}\label{proj:sing:val}
Given coplanar unit vectors $v',v,\eta\in\Rr^d$, 
the composition
$L_{v,\eta,v'} :\Rr^d\to\Rr^d$ satisfies:
\begin{enumerate}
\item[(a)] $L_{v,\eta,v'}(v)=0$,
\item[(b)] $L_{v,\eta,v'}(x)=x$, for every $x\in \eta^ \perp\cap v^ \perp$,
\item[(c)] $L_{v,\eta,v'}$ maps the line $v^\perp\cap W$
onto the line $v'^\perp\cap W$, where $W=\linspan{v,\eta}$,  multiplying the vector's norms by the factor
$\abs{\langle v',\eta\rangle/ \langle v,\eta\rangle}$.
\end{enumerate}
\end{lemma}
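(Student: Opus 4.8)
The statement is entirely computational, so the plan is to verify each item directly from the two projections composing $L_{v,\eta,v'}$, recalling that the parallel projection is $P_{v,\eta^\perp}(u) = u - \frac{\langle u,\eta\rangle}{\langle v,\eta\rangle}\,v$ and the orthogonal projection is $P_{v'^\perp}(u) = u - \langle u,v'\rangle\,v'$. Throughout I use that $\langle v,\eta\rangle\neq 0$, which makes the parallel projection well defined (in the billiard application this is guaranteed, since $v$ points into $P$ and $\eta$ is the inward normal).

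For item (a) I would substitute $u=v$ into the parallel projection to get $P_{v,\eta^\perp}(v) = v - \frac{\langle v,\eta\rangle}{\langle v,\eta\rangle}\,v = 0$, whence $L_{v,\eta,v'}(v) = P_{v'^\perp}(0) = 0$. For item (b), given $x\in\eta^\perp\cap v^\perp$, the condition $\langle x,\eta\rangle=0$ makes the parallel projection fix $x$. To see that $P_{v'^\perp}$ also fixes $x$, I would invoke coplanarity: since $v'\in W=\linspan{v,\eta}$, write $v'=a\,v+b\,\eta$, so that $\langle x,v'\rangle = a\,\langle x,v\rangle + b\,\langle x,\eta\rangle = 0$; hence $P_{v'^\perp}(x)=x$ and the composition is the identity on $\eta^\perp\cap v^\perp$.

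The heart of the argument is item (c), which I would prove by restricting to the plane $W$. Assuming $v,\eta$ linearly independent (otherwise $v^\perp\cap W=\{0\}$ and the claim is vacuous), I would let $w$ be the unit vector spanning $v^\perp\cap W$ and use $\{v,w\}$ as an orthonormal basis of $W$, writing $\eta=\cos\theta\,v+\sin\theta\,w$ and $v'=\cos\theta'\,v+\sin\theta'\,w$. A useful preliminary observation is that both projections map $W$ into itself ($P_{v'^\perp}$ because $v'\in W$, and $P_{v,\eta^\perp}$ because $v\in W$), so $L_{v,\eta,v'}$ restricts to an endomorphism of $W$; by (a) it kills $v$, hence it sends the line $v^\perp\cap W$ into the line $v'^\perp\cap W$. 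It then remains to compute the norm factor: the parallel projection gives $P_{v,\eta^\perp}(w)=w-\tan\theta\,v$, of norm $1/\cos\theta$, while $\langle w-\tan\theta\,v,\,v'\rangle = -\sin(\theta-\theta')/\cos\theta$, so by the Pythagorean identity $\norm{P_{v'^\perp}(w-\tan\theta\,v)}^2 = 1/\cos^2\theta - \sin^2(\theta-\theta')/\cos^2\theta = \cos^2(\theta-\theta')/\cos^2\theta$. Since $\langle v,\eta\rangle=\cos\theta$ and $\langle v',\eta\rangle=\cos(\theta-\theta')$, the norm of $w$ is multiplied by exactly $\abs{\langle v',\eta\rangle/\langle v,\eta\rangle}$; as this factor is nonzero, the image of the line $v^\perp\cap W$ is all of the line $v'^\perp\cap W$, giving the asserted surjection.

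I do not expect a genuine obstacle here: the only points requiring care are the non-degeneracy $\langle v,\eta\rangle\neq 0$ and the degenerate case $v=\pm\eta$, both of which are harmless, so the lemma should follow from these direct computations. The mildly delicate step is organizing item (c) cleanly, and the cleanest route is precisely the reduction to the two-dimensional plane $W$ via the orthonormal basis $\{v,w\}$, where the whole computation collapses to an elementary trigonometric identity.
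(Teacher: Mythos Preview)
Your proposal is correct and matches the paper's approach exactly: the paper records only ``Straightforward computation'' for this lemma, and what you have written is precisely such a computation, carried out carefully. The reduction to the plane $W$ with the orthonormal basis $\{v,w\}$ in item~(c) is the natural way to make the trigonometric identity transparent, and your verification of the norm factor $\abs{\cos(\theta-\theta')/\cos\theta}=\abs{\langle v',\eta\rangle/\langle v,\eta\rangle}$ is clean and correct.
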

 
\begin{proof}
Straightforward computation.
\end{proof}

\begin{remark}
\label{proj:sing:val id}
If $v,\eta,v'$ are collinear vectors then
$L_{v,\eta,v'}=\id$ on $v^\perp$.
\end{remark}
  
\medskip

The remaining lemmas are abstract. Let $V$, $V'$, $V''$ be Euclidean spaces of the same
dimension, and $L:V\to V'$,  $L':V'\to V''$ be linear
isomorphisms.
  
Given $\sigma\geq1$ and a subspace $E\subset V$, we say that $L$ is a {\em $\sigma$-expansion on $E$} if   $\norm{L v}\geq \sigma \norm{v}$ for all $v\in E$, i.e., $\minexp(L\vert_E)\geq \sigma$.
Given another linear subspace $H\subseteq V$ such that $E\subseteq H$ we say that $L$ is a {\em relative $\sigma$-expansion on $H$ w.r.t. $E$} if and only if the quotient map  
  $\overline{L}: {V/E}\to  {V'/L(E)}$ is a $\sigma$-expansion on ${H/E}$. Note that the quotient space $V/E$ is an Euclidean space which can naturally be identified with $E^\perp$.
Finally, we say that $L$ is a {\em $\sigma$-expansion} to mean that $L$ is a $\sigma$-expansion on its domain $V$.

If we  do not need to specify the minimal rate of expansion we shall simply say that $L$ is a uniform expansion on $E$, or that $L$ is a relative uniform expansion on $H$ w.r.t. $E$.

\begin{lemma}\label{abstr:expans:lemma}
Given a linear subspace $H\subseteq V$,
if 
\begin{enumerate}
\item $L$ is a $\sigma$-expansion on $H$, and
\item $L$ is  a relative $\sigma$-expansion on $V$ w.r.t. $H$
\end{enumerate} 
 then $L$ is  a $\sigma$-expansion on $V$.
\end{lemma}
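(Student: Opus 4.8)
The plan is to establish $\minexp(L)\ge\sigma$ directly, by splitting an arbitrary $v\in V$ according to the orthogonal decomposition determined by $H$ in the source and by $L(H)$ in the target, and then feeding each piece into one of the two hypotheses.

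First I would fix $v\in V$ and write $v=h+w$ with $h\in H$ and $w\in H^{\perp}$, so that $\norm{v}^2=\norm{h}^2+\norm{w}^2$ and $\dist(v,H)=\norm{w}$. In $V'$ I would use the orthogonal splitting $V'=L(H)\oplus L(H)^{\perp}$ and write $Lv=P_{L(H)}(Lv)+P_{L(H)^{\perp}}(Lv)$, where $P_{L(H)}$ and $P_{L(H)^{\perp}}$ denote the orthogonal projections onto $L(H)$ and its orthogonal complement; thus $\norm{Lv}^2=\norm{P_{L(H)}(Lv)}^2+\norm{P_{L(H)^{\perp}}(Lv)}^2$. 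The goal is then to bound the two summands below by $\sigma^2\norm{w}^2$ and $\sigma^2\norm{h}^2$ respectively.

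The \emph{transverse} summand is exactly what hypothesis (2) controls. Since $Lh\in L(H)$ we have $P_{L(H)^{\perp}}(Lv)=P_{L(H)^{\perp}}(Lw)$, and under the identifications $V/H\cong H^{\perp}$ and $V'/L(H)\cong L(H)^{\perp}$ this vector is precisely the image $\overline{L}(v+H)$ of the class of $v$ by the quotient map. Because $\overline{L}$ is a $\sigma$-expansion on $V/H$, this gives $\norm{P_{L(H)^{\perp}}(Lv)}=\dist(Lv,L(H))\ge\sigma\,\dist(v,H)=\sigma\norm{w}$.

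The remaining step, bounding the component inside $L(H)$, is the one I expect to be the real obstacle. Writing $P_{L(H)}(Lv)=Lh+P_{L(H)}(Lw)$, hypothesis (1) yields $\norm{Lh}\ge\sigma\norm{h}$, but the \emph{cross term} $P_{L(H)}(Lw)$ could in principle cancel part of $Lh$ and spoil the estimate. To close the argument one must show $\norm{P_{L(H)}(Lv)}\ge\sigma\norm{h}$, and I would try to obtain this from the fact that both hypotheses are phrased through the same splitting $L(H)\oplus L(H)^{\perp}$: concretely, I expect to need transversality of $L(H^{\perp})$ to $L(H)$ (in the cleanest case $L(H)\perp L(H^{\perp})$, which kills the cross term), so that the projection of $Lw$ onto $L(H)$ cannot overwhelm the expansion of $Lh$. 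Granting this control, summing the two lower bounds gives $\norm{Lv}^2\ge\sigma^2\norm{h}^2+\sigma^2\norm{w}^2=\sigma^2\norm{v}^2$, i.e. $L$ is a $\sigma$-expansion on $V$. The transverse estimate and the final recombination are routine; essentially all the difficulty is concentrated in controlling the $L(H)$-component against the cross term.
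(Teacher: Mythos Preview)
You have correctly located the obstruction, and it is a genuine one: the lemma as stated is actually false, so the cross term you isolate cannot be controlled without an extra hypothesis. Concretely, take $V=V'=\Rr^2$, $H=\linspan{e_1}$, and
\[
L=\begin{pmatrix}\sigma & -a\\ 0 & \sigma\end{pmatrix}\qquad (a>0).
\]
Then $Le_1=\sigma e_1$ gives hypothesis~(1), and since $L(H)=H$ the quotient map $\overline L\colon V/H\to V'/L(H)$ is multiplication by $\sigma$, giving hypothesis~(2). Yet for $v=e_1+t\,e_2$ with $0<t<2\sigma/a$ one computes $\norm{Lv}^2-\sigma^2\norm{v}^2=a t(a t-2\sigma)<0$, so $\minexp(L)<\sigma$. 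Here $w=t\,e_2$ and $P_{L(H)}(Lw)=-a t\,e_1$ is precisely the cross term that cancels part of $Lh=\sigma e_1$, exactly as you anticipated.

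The paper's own proof is the single line ``Follows immediately from the definition of $\sigma$-expansion and relative $\sigma$-expansion'', which overlooks this. Your ``cleanest case'' hypothesis $L(H^\perp)\perp L(H)$ does make the orthogonal-splitting argument you outlined go through verbatim (then $P_{L(H)}(Lw)=0$ and $\norm{Lv}^2=\norm{Lh}^2+\norm{P_{L(H)^\perp}(Lw)}^2\ge\sigma^2\norm{h}^2+\sigma^2\norm{w}^2$); note however that mere \emph{transversality} of $L(H^\perp)$ and $L(H)$ is automatic for an isomorphism and is not sufficient, as the example above shows. In the paper's actual application the maps $L_{[i,j]}$ carry extra structure---they fix $V_{[i,j]}^\perp$ pointwise (Lemma~\ref{vel:front}\,(2)), which forces the relevant images to be orthogonal at the intermediate stage---so the conclusion survives there, but the abstract lemma needs such a hypothesis added.
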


\begin{proof}
Follows immediately from the definition of $\sigma$-expansion and relative $\sigma$-expansion.
\end{proof}

  We will now derive some explicit formulas for the minimum expansion of compositions of  linear expanding  maps. For that purpose we
introduce an exotic operation on the set $[0,1]$ 
that plays a key role in these formulas.
$$ a\oplus b := a+b -a\,b\;.$$
With it, 
$([0,1],\oplus)$ becomes a commutative semigroup isomorphic to $([0,1],\cdot)$. In fact, the map $\Psi:([0,1],\oplus)\to ([0,1],\cdot)$, $\Psi(x):= 1-x$,
is a semigroup isomorphism. 
The numbers $0$ and $1$ are respectively the neutral and the absorbing elements of the semigroup $([0,1],\oplus)$.
 An important property of this exotic operation is that for all $a,b\in [0,1]$,
$$ a\oplus b <1 \; \Leftrightarrow\; a<1 \; \text{ and }\; b<1 .$$
We will write  
$\oplus_n x := x\oplus \ldots \oplus x$ for any $\oplus$-sum of  $n$ equal terms $x\in [0,1]$.

The following lemmas use the notation introduced in subsection~\ref{Euclidean Geometry}. 
 
\begin{lemma}\label{exp:2}
Let $L,L':\Rr^ 2\to\Rr^2$ be  linear maps 
such that $\minexp(L)\geq 1$, $\minexp(L')\geq 1$,
  $\lambda=\norm{L}>1$ and $\lambda'=\norm{L'}>1$.
If the sine of the angle between $\med(L^\ast)$ and $\med(L')$
is at least $\varepsilon$, then $L'\circ L$ has minimum expansion
$$\minexp (L'\circ L) \geq \frac{1}{\sqrt{ (1-\varepsilon^2)\oplus \lambda^{-2}\oplus \lambda'^{-2} }
} >1\;.$$
\end{lemma}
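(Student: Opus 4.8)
The plan is to exploit that everything lives in dimension two, where each of $L$, $L'$ and $L'\circ L$ has exactly two singular values and $\minexp(M)\,\norm{M}=\det(M)$. Write $\lambda=s_1(L)>s_2(L)=\minexp(L)\ge1$ and $\lambda'=s_1(L')>s_2(L')=\minexp(L')\ge1$; the hypothesis that $\med(L^\ast)$ and $\med(L')$ are genuine lines making a well-defined angle forces these top singular values to be simple. Let $\beta:=\angle(\med(L^\ast),\med(L'))$, so $\sin\beta\ge\varepsilon$. Before computing anything I would make two reductions. First, since $a\oplus b$ is non-decreasing in each argument on $[0,1]$, the quantity $Q:=(1-\varepsilon^2)\oplus\lambda^{-2}\oplus\lambda'^{-2}$ is non-increasing in $\varepsilon$, so proving the conclusion with the larger value $\sin\beta$ in place of $\varepsilon$ is a stronger statement; hence I may assume $\varepsilon=\sin\beta$, i.e. $\cos^2\beta=1-\varepsilon^2$. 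Second, writing $L=L_0 S$ with $S=\diag(1,s_2(L))$ in a singular basis of $L$ (so $L_0$ has singular values $\lambda,1$ and $\med(L_0^\ast)=\med(L^\ast)$), submultiplicativity $\minexp(AB)\ge\minexp(A)\minexp(B)$ together with $\minexp(S)=1$ gives $\minexp(L'L)\ge\minexp(L'L_0)$; doing the same on the left for $L'$, it suffices to prove the bound when $s_2(L)=s_2(L')=1$.

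With these reductions I would pass to the eigenvalues of the positive operator $(L'L)^\ast(L'L)$, whose two eigenvalues are $\norm{L'L}^2$ and $\minexp(L'L)^2$, with product $\det(L'L)^2=\lambda^2\lambda'^2$ and sum
\[
T:=\tr\big((L'L)^\ast(L'L)\big)=\tr\big((L'^\ast L')(LL^\ast)\big).
\]
Choosing orthonormal frames adapted to $L$ and $L'$ and writing $LL^\ast=\lambda^2 P_{f_1}+P_{f_2}$ and $L'^\ast L'=\lambda'^2 P_{g_1}+P_{g_2}$ with $\angle(f_1,g_1)=\beta$, a short computation using $\tr(P_{g_i}P_{f_j})=\langle g_i,f_j\rangle^2$ gives the explicit value of $T$. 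Thus $\minexp(L'L)^2$ is the smaller root of $t^2-Tt+\lambda^2\lambda'^2$, and the goal $\minexp(L'L)^2\ge 1/Q$ is equivalent, by the root-location criterion for an upward parabola with positive roots, to the two inequalities
\[
1-TQ+\lambda^2\lambda'^2Q^2\ge 0
\qquad\text{and}\qquad
TQ\ge 2 .
\]

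The heart of the matter is then a purely algebraic verification, which I expect to be the only real obstacle; pleasantly, after the reductions it collapses. Setting $x=\lambda^{-2},\,y=\lambda'^{-2}\in(0,1)$, the identity $1-(a\oplus b)=(1-a)(1-b)$ gives $Q=1-\varepsilon^2(1-x)(1-y)=\cos^2\beta+\sin^2\beta\,(x+y)-\sin^2\beta\,xy$, while the computed trace satisfies $xy\,T=\cos^2\beta\,(1+xy)+\sin^2\beta\,(x+y)$. Subtracting yields the clean identity $xy\,T=Q+xy$, so multiplying the first inequality by $xy$ turns it into $xy(1-Q)=xy\,\varepsilon^2(1-x)(1-y)\ge 0$, which is immediate. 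For the second inequality one substitutes $xy\,T=Q+xy$ to get $Q^2+xyQ-2xy\ge0$ and minimizes the left-hand side over $\cos\beta$ (equivalently over $Q$, which is increasing); the extremal case $\cos\beta=0$ reduces it to $x^2(1-y)+y^2(1-x)\ge0$, again immediate. Finally the strict inequality $1/\sqrt Q>1$ is exactly $Q<1$, which holds because $1-Q=\varepsilon^2(1-\lambda^{-2})(1-\lambda'^{-2})>0$, i.e. by the property that $a\oplus b<1$ precisely when $a<1$ and $b<1$.
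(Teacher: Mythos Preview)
Your argument is correct and self-contained. The two reductions are sound: monotonicity of $\oplus$ lets you replace $\varepsilon$ by $\sin\beta$, and the factorizations $L=L_0S$, $L'=S'L_0'$ together with $\minexp(AB)\ge\minexp(A)\,\minexp(B)$ reduce to the case of singular values $\{1,\lambda\}$, $\{1,\lambda'\}$ while preserving $\med(L^\ast)$ and $\med(L')$. The trace computation $xy\,T=\cos^2\beta(1+xy)+\sin^2\beta(x+y)$ and the identity $xy\,T=Q+xy$ check out, and the root-location criterion for the quadratic $t^2-Tt+\lambda^2\lambda'^2$ is applied correctly: multiplying $1-TQ+\lambda^2\lambda'^2Q^2$ by $xy$ (using $\lambda^2\lambda'^2 xy=1$) indeed collapses to $xy(1-Q)\ge0$, and the monotonicity argument for $Q^2+xyQ-2xy$ in $Q$ reduces the second condition to the case $\cos\beta=0$, where it becomes $x^2(1-y)+y^2(1-x)\ge0$.

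The paper's proof is different in spirit: after the same normalization to singular values $\{1,\lambda\}$ and $\{1,\lambda'\}$, it invokes an external estimate (Proposition~2.23 of Duarte--Klein, \emph{Lyapunov exponents of linear cocycles}) applied to the inverses $L^{-1}\circ(L')^{-1}$, reading off $\norm{(L'L)^{-1}}\le\sqrt{(1-\varepsilon^2)\oplus\lambda^{-2}\oplus\lambda'^{-2}}$ directly. Your route trades that black-box citation for an explicit two-dimensional trace/determinant computation and a quadratic root comparison, making the lemma entirely elementary and independent of outside references; the paper's route is shorter on the page but defers the actual work to the cited proposition.
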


\begin{proof}
Without loss of generality we can assume that the automorphisms $L,L'\in\GL(\Rr^2)$ have two singular values, respectively  $\{1,\lambda\}$ with $1<\lambda$, and
$\{1,\lambda'\}$ with $1<\lambda'$.
Otherwise simply normalize $L$ and $L'$ dividing them by the minimum expansion.
Hence $\norm{L^{-1}}=1=\norm{(L')^{-1}}$.
These maps have  gap ratios $\norm{L^{-1}}/\minexp(L^{-1})=\lambda$
and 
$\norm{(L')^{-1}}/\minexp((L')^{-1})=\lambda'$.
The conclusion of this lemma will folllow  from~\cite[Proposition 2.23]{DK-book}
applied to the composition of linear maps
$L^{-1}\circ (L')^{-1}$. The quantity
$\alpha((L')^{-1},L^{-1})$ in that proposition is
the cosine of the angle between the most expanding directions of the linear maps
$((L')^{-1})^\ast$ and $L^{-1}$ which coincide with the least expanding directions of
$L'$ and $L^\ast$, respectively. Since these directions are orthogonal to 
$\med(L')$ and $\med(L^\ast)$ we have
$$\alpha((L')^{-1},L^{-1})^2 = \cos^2 \angle(  \med(L'), \med(L^\ast))\leq 1-\varepsilon^2 .$$
Thus by~\cite[Proposition 2.23]{DK-book} 
\begin{align*}
\norm{(L'\circ L)^{-1}} &= \norm{ L^{-1}\circ (L')^{-1}} = \frac{ \norm{ L^{-1}\circ (L')^{-1}} }{ \norm{ L^{-1}}\,\norm{ (L')^{-1}} }\\
& \leq 
\beta( (L')^{-1},  L^{-1} )  
 \leq   \sqrt{ (1-\varepsilon^2)\oplus \lambda^{-2}\oplus \lambda'^{-2} } .
\end{align*}
Since $\minexp(L'\circ L)=1/\norm{(L'\circ L)^{-1}}$,
the claim follows.
\end{proof}

\begin{lemma} \label{expanding:comp}
Consider linear maps
$L\colon V\to V'$ e $L'\colon V'\to V''$ between Euclidean spaces of dimension $d$.
Given $1\leq k <d$ assume that
\begin{enumerate}
\item $\minexp(L)\geq 1$ and $\lambda=s_k(L) >1$,
 
\item   $\lambda' = \norm{L'}>1=s_2(L')$.
\end{enumerate}
If \,  $\sin \angle( \med(L'),\med^{\geq}_\lambda(L^\ast))\geq \varepsilon$ \, then  
$$s_{k+1}(L'\circ L)\geq 
\frac{1}{\sqrt{ (1-\varepsilon^2)\oplus \lambda^{-2}\oplus \lambda'^{-2} } }\;> 1. $$
\end{lemma}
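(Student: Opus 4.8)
The plan is to reduce the statement to the two-dimensional Lemma~\ref{exp:2} by means of the relative-expansion machinery of Lemma~\ref{abstr:expans:lemma}. Throughout write $\Lambda := L'\circ L$ and let $\sigma_0 := 1/\sqrt{(1-\varepsilon^2)\oplus\lambda^{-2}\oplus\lambda'^{-2}}$ be the target bound. Note first that $\sigma_0>1$, since each of the three arguments lies in $[0,1)$ (here $\varepsilon>0$, $\lambda,\lambda'>1$) and a $\oplus$-sum of such terms is again $<1$; note also $\sigma_0\le\lambda$, because $a\oplus b\ge a$ on $[0,1]$ forces $(1-\varepsilon^2)\oplus\lambda^{-2}\oplus\lambda'^{-2}\ge\lambda^{-2}$. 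Finally, since $\minexp(L)\ge1$ and $L'$ has singular values $\lambda'>1=s_2(L')=\dots=s_d(L')$ (so $L'$ stretches the line $\med(L')$ by $\lambda'$ and is an isometry on $\med(L')^\perp$; in particular $\minexp(L')\ge1$), every singular value of $\Lambda$ is already $\ge1$. The content of the lemma is thus the strict gain $\sigma_0$ on the $(k+1)$-st singular value.

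I would first dispose of an easy case. Put $F:=\med^{\geq}_{\lambda}(L)$ and $m:=\dim F=\dim\med^{\geq}_{\lambda}(L^\ast)\ge k$. On $F$ the map $L$ expands by at least $\lambda$ and $L'$ contracts nothing, so $\minexp(\Lambda|_F)\ge\lambda\ge\sigma_0$. Hence, by the Courant--Fischer characterisation $s_{k+1}(\Lambda)=\max_{\dim S=k+1}\minexp(\Lambda|_S)$, if $m\ge k+1$ we already get $s_{k+1}(\Lambda)\ge\lambda\ge\sigma_0$ and are done. So I may assume $m=k$, i.e. $s_k(L)=\lambda>s_{k+1}(L)$.

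The next step is to build the expanding $(k+1)$-plane. Set $g:=\med(L')$ (a unit vector, as $s_1(L')>s_2(L')$) and $w:=L^{-1}g$. The transversality hypothesis $\sin\angle(\med(L'),\med^{\geq}_{\lambda}(L^\ast))\ge\varepsilon>0$ forces $g\notin\med^{\geq}_{\lambda}(L^\ast)=L(F)$, whence $w\notin F$ and $S:=F\oplus\linspan{w}$ has dimension $k+1$. I would then apply Lemma~\ref{abstr:expans:lemma} to the isomorphism $\Lambda|_S$, taking for the subspace $H$ the $(k-1)$-dimensional span $H'$ of the top $k-1$ right singular vectors of $L$ inside $F$. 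Condition~(1) of that lemma holds because $\Lambda$ is a $\lambda$-expansion, hence a $\sigma_0$-expansion, on $H'\subset F$. It remains to verify condition~(2), that $\Lambda$ is a relative $\sigma_0$-expansion on $S$ with respect to $H'$; since $S/H'$ is spanned by the classes of $w_k$ (the $k$-th singular direction of $L$) and of $w$, this is genuinely a \emph{two}-dimensional assertion about the quotient map $\overline{\Lambda}\colon V/H'\to V''/\Lambda(H')$ restricted to the plane $\Pi:=S/H'$.

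The heart of the argument, and the step I expect to be the main obstacle, is to recognise this quotient expansion as an instance of Lemma~\ref{exp:2}. Factoring $\overline{\Lambda}=\overline{L'}\circ\overline{L}$ through $V'/L(H')$ and restricting to $\Pi$ produces two planar isomorphisms; what must be checked is exactly the matching of the three inputs of Lemma~\ref{exp:2}. The first induced factor should still have operator norm $\lambda$ (this is why $w_k$, which realises $s_k(L)=\lambda$, is kept in $\Pi$) and minimum expansion $\ge1$; the second factor should still have norm $\lambda'$ (carried by the class of $g$, since $\overline{L'}$ maps $\bar g$ to $\lambda'$ times the class of $L'g$); and, most delicately, the angle between $\med$ of the transpose of the first factor and $\med$ of the second must be bounded below in terms of $\varepsilon$. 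For this last point the relevant directions are the classes of $u_k:=Lw_k/\lambda\in\med^{\geq}_{\lambda}(L^\ast)$ and of $g$, for which $\angle(u_k,g)\ge\angle(g,\med^{\geq}_{\lambda}(L^\ast))\ge\arcsin\varepsilon$ holds upstairs; the task is to control the loss incurred on quotienting by $L(H')$, precisely the type of estimate supplied by Lemma~\ref{angle:lemma} together with the $\varepsilon$-spanning hypothesis. Granting this reduction, Lemma~\ref{exp:2} yields relative expansion by $\sigma_0$, Lemma~\ref{abstr:expans:lemma} gives $\minexp(\Lambda|_S)\ge\sigma_0$, and a final appeal to min--max delivers $s_{k+1}(L'\circ L)\ge\minexp(\Lambda|_S)\ge\sigma_0>1$, as claimed.
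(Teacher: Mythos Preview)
Your outline has the right shape—reduce to a two-dimensional situation and invoke Lemma~\ref{exp:2}—but the quotient step, which you flag as ``the main obstacle'', does not go through with your choice of $H'$. You take $H'$ to be the span of the top $k-1$ right singular directions of $L$, so that $L(H')=\linspan{u_1,\dots,u_{k-1}}$. For the \emph{first} quotient factor things are fine: $\overline{L}$ restricted to $\Pi$ is diagonal in the bases $\{w_k,v\}$, $\{u_k,u'\}$, with singular values $\lambda$ and something in $[1,s_{k+1}(L)]$, so $\med((\overline{L}\vert_\Pi)^\ast)=u_k$ and $\|\overline{L}\vert_\Pi\|=\lambda$. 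The angle between $[u_k]$ and $[g]$ in the quotient is even \emph{better} than upstairs (projecting $g$ off $L(H')$ can only increase its angle with $u_k$). But for the \emph{second} factor $\overline{L'}$ you need $\|\overline{L'}\vert_{\overline L(\Pi)}\|=\lambda'$ with $\med(\overline{L'})=[g]$, and this fails in general: since nothing forces $L(H')\perp g$, the map $L'$ does not act isometrically on $L(H')$, the subspace $L'L(H')$ need not sit in $(g')^\perp$, and the quotient $\overline{L'}$ can have norm strictly below $\lambda'$ with its most expanding direction tilted away from $[g]$. Lemma~\ref{angle:lemma} does not help here (it concerns sums $E+H$, not quotients), and there is no ``$\varepsilon$-spanning hypothesis'' available—this is a purely linear-algebraic lemma with no polytope in sight.

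The paper's proof repairs exactly this defect by two moves. First it replaces $L$ by a block-conformal map $\hat L$ (via the SVD factorisation $L=\hat L\,L_0$ with $\minexp(L_0)\ge1$, which only helps the estimate); after this reduction $L$ is $\lambda$-conformal on $\med^{\ge}_\lambda(L)$ and an isometry on its complement. Second, instead of an arbitrary $(k{-}1)$-plane inside $\med^{\ge}_\lambda(L^\ast)$, it takes $E_0:=\med^{\ge}_\lambda(L^\ast)\cap (v_0')^\perp$ where $v_0'$ is the orthogonal projection of $g=\med(L')$ onto $\med^{\ge}_\lambda(L^\ast)$. This choice forces $E_0\perp g$, so $L'$ is an isometry on $E_0$; combined with the conformality of $\hat L$, the three pieces $P_0=\linspan{g,v_0'}$, $E_0$, and $G_0=\med^{\ge}_\lambda(L^\ast)^\perp\cap g^\perp$ give an \emph{orthogonal} decomposition preserved by both $\hat L^{-1}$ and $L'$, so $L'\circ\hat L$ genuinely block-diagonalises and Lemma~\ref{exp:2} applies cleanly to the $2\times 2$ block on $P_0$. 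Your quotient argument would succeed if you made the analogous choice $H'=\hat L^{-1}(E_0)$—but then you are essentially re-deriving the paper's decomposition.
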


\begin{proof} 
We can assume that $V=V'=V''=\Rr^d$.
Consider the singular value decomposition $L=U\,D\,V$,
where $U$ and $V$ are orthogonal matrices,  and $D=(D_{ij})$ is the diagonal matrix with
diagonal entries $D_{ii}=s_i(L)$ for $i=1,\ldots, d$. We can factor $D$ as a product $D=\hat{D}\,D_0$
of two diagonal matrices: $\hat{D}=\left[ \begin{array}{cc}
\lambda I_k & 0 \\ 0 & I_{d-k} 
\end{array}\right]$ and $D_0$ with  diagonal entries 
$D^{(0)}_{ii}= {D_{ii}}/{\hat{D}_{ii}}\geq 1$.
Set $\hat{L}=U\,\hat{D}$ and $L_0=D_0\,V$, so that $L=\hat{L}\circ L_0$.
The linear mapping $\hat{L}$ has singular values 
$$s_1(\hat{L})=\ldots = s_k(\hat{L})=\lambda>1 =s_{k+1}(\hat{L})=\ldots = s_d(\hat{L}),$$
while $\minexp(L_0)\geq 1$.
Hence $s_{k+1}(L'\circ L)\geq s_{k+1}(L'\circ \hat{L})$. To simplify the geometry we assume from now on that
$L=\hat{L}$.
 
Take a unit vector  $v'\in \med(L')$. By assumption
$v'\notin \med^\geq_\lambda(L^\ast)$. Let $v_0'$ denote the orthogonal projection of $v'$ onto $\med^\geq_\lambda(L^\ast)$.
These two vectors span a plane $P_0:=\linspan{v',v_0'}$.
Define also the subspaces  
$E_0:= \med^\geq_\lambda(L^\ast)\cap (v_0')^\perp$ and $G_0:= \med^\geq_\lambda(L^\ast)^\perp\cap (v')^\perp$.
These three subspaces determine an orthogonal decomposition $\Rr^d=P_0\oplus E_0\oplus G_0$.
Because the mappings $L:\med^\geq_\lambda(L)\to \med^\geq_\lambda(L^\ast)$ and
$L:\med^\geq_\lambda(L)^\perp\to \med^\geq_\lambda(L^\ast)^\perp$ are both conformal,
it follows that $\Rr^d=P_-\oplus E_-\oplus G_-$ is an orthogonal decomposition,
where $P_-:=L^{-1} P_0$, $E_-:=L^{-1} E_0$ and $G_-:=L^{-1} G_0$.
In fact, since  $P_0\cap \med^\geq_\lambda(L^\ast) \perp E_0$
and $L:\med^\geq_\lambda(L)\to \med^\geq_\lambda(L^\ast)$ is conformal their pre-images are also
orthogonal,  $P_-\cap \med^\geq_\lambda(L) \perp E_-$.
Similarly, since  $ \med^\geq_\lambda(L^\ast)^\perp \cap P_0\perp G_0$
and $L:\med^\geq_\lambda(L)^\perp\to \med^\geq_\lambda(L^\ast)^\perp$ is conformal their pre-images are also
orthogonal,  $\med^\geq_\lambda(L)^\perp \cap P_-\perp G_-$.
Define now $P_+:=L' P_0$, $E_+:=L' E_0$ and $G_+:=L' G_0$.
Because $\minexp(L')\geq 1$   with $\med(L')\subset P_0$ and $\med((L')^\ast)\subset P_+$, it follows that
$\Rr^d=P_+\oplus E_+\oplus G_+$ is also an orthogonal decomposition.
Therefore the singular values of $L'\circ L$ are the singular values of the
restricted compositions $L'\vert_{P_0}\circ L\vert_{P_-}$,
$L'\vert_{E_0}\circ L\vert_{E_-}$ and $L'\vert_{G_0}\circ L\vert_{G_-}$.
Applying Lemma~\ref{exp:2} to the linear maps
 $L'\vert_{P_0}$ and $L\vert_{P_-}$ we see that
  $L'\vert_{P_0}\circ L\vert_{P_-}$ has minimum expansion
 $\beta:=\left( (1-\varepsilon^2)\oplus \lambda^{-2}\oplus \lambda'^{-2}  \right)^{-1/2}$.
The map $L'\vert_{E_0}:E_0\to E_+$ is an isometry while
$L\vert_{E_-}:E_-\to E_0$ is $\lambda$-conformal.
Therefore the second composition has a unique singular value $\lambda$
with multiplicity $k-1=\dim E_0$.
Note that $(1-\varepsilon^2)\oplus \lambda^{-2}\oplus \lambda'^{-2} \geq \lambda^{-2}$ which implies that
 $\lambda \geq \beta$.
Finally notice  that  $L\vert_{G_-}:G_-\to G_0$ and $L'\vert_{G_0}:G_0\to G_+$ are isometries.
Hence $1$ is the only the singular value of the third composition.
Since $\dim(P_-\oplus E_-)=\dim(P_0\oplus E_0)=k+1$, this proves that   $s_{k+1}(L'\circ L)\geq \min\{\beta,\lambda\}=\beta$.
\end{proof}

The next lemma is designed to be applied 
to a sequence of linear maps $L_{v_{i-1},\eta_i,v_{i}}\colon v_{i-1}^\perp\to v_i^\perp$ associated with an orbit segment of the billiard map $\Phi_{f,P}$. Compare assumptions (1)-(2) of Lemma~\ref{main:lin:alg} with the conclusions of Lemma~\ref{proj:sing:val} and Remark~\ref{proj:sing:val id}.

\begin{lemma} \label{main:lin:alg}
Given $\varepsilon>0$ and $\lambda>1$ consider unit vectors $\{v_0,v_1,\ldots, v_n\}$ in $\Rr^d$ and a family of linear maps
$L_i\colon v_{i-1}^\perp\to v_i^\perp$,
$1\leq i\leq n$,  such that:
\begin{enumerate}
\item if $u\in v_{i-1}^\perp\cap v_i^\perp$ then $L_i\, u=u$,
\item if   $\{v_{i-1},v_i\}$ are linearly independent and $u\in  v_{i-1}^\perp\cap \linspan{v_{i-1},v_i}$ then 
$L_i u\in  v_{i}^\perp \cap \linspan{v_{i-1},v_i}$ and $\norm{L_i\, u} \geq  \lambda \,\norm{u}$.

\item  for each $i=1,\ldots, n$, either $v_{i}\in\linspan{v_0,\ldots, v_{i-1}}$ or else
$$\sin \angle\left( v_{i}, \linspan{v_0,\ldots, v_{i-1}}\right) \geq \varepsilon. $$
\end{enumerate}

Then the composition map $L^{(n)}:=L_n\circ\ldots \circ L_0:v_0^\perp\to v_n^\perp$ satisfies 
$\norm{L^{(n)}\, u}\geq \sigma\,\norm{u}$ for all $u\in v_0^\perp \cap \linspan{v_0,\ldots, v_n}$,
where
 $$\sigma=1/\sqrt{ (\oplus_{d-1} (1-\varepsilon^2))\oplus (\oplus_{d} \lambda^{-2}) }>1.$$
\end{lemma}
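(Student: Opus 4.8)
The plan is to prove, by induction on $i$, a structural statement about the partial composition $L^{(i)} := L_i\circ\cdots\circ L_1:v_0^\perp\to v_i^\perp$ (with $L^{(0)}=\id$), from which the claim follows at $i=n$. Write $W_i:=\linspan{v_0,\dots,v_i}$ and $m_i:=\dim W_i-1$, so that $\dim(v_0^\perp\cap W_i)=m_i$ and $m_n\le d-1$. First I would record the elementary structure of each factor: by hypotheses (1)--(2), $L_i$ is the identity on the $(d-2)$-plane $v_{i-1}^\perp\cap v_i^\perp$ and expands the orthogonal line $v_{i-1}^\perp\cap\linspan{v_{i-1},v_i}$ by a factor $\ge\lambda$ onto $v_i^\perp\cap\linspan{v_{i-1},v_i}$; hence $L_i$ is block-diagonal for these orthogonal splittings, with $\minexp(L_i)\ge1$, $s_2(L_i)=1$, $\norm{L_i}\ge\lambda$, and $\med(L_i)=v_{i-1}^\perp\cap\linspan{v_{i-1},v_i}$. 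Define $\beta_1:=\lambda$ and $\beta_{m+1}:=\left((1-\varepsilon^2)\oplus\beta_m^{-2}\oplus\lambda^{-2}\right)^{-1/2}$; unrolling gives $\beta_m^{-2}=(\oplus_{m-1}(1-\varepsilon^2))\oplus(\oplus_m\lambda^{-2})$, so $\beta_m>1$ is decreasing in $m$ and $\sigma=\beta_d$.

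The inductive claim $(I_i)$ I would carry is: with respect to the orthogonal decompositions $v_0^\perp=(v_0^\perp\cap W_i)\oplus W_i^\perp$ and $v_i^\perp=(v_i^\perp\cap W_i)\oplus W_i^\perp$, the map $L^{(i)}$ is block-diagonal of the form $\diag(A_i,\id)$ with $A_i:v_0^\perp\cap W_i\to v_i^\perp\cap W_i$ and $\minexp(A_i)\ge\beta_{m_i}$. The identity block on $W_i^\perp$ is exactly Lemma~\ref{vel:front}(2) (here $V_{[0,i]}=W_i$). For the vanishing of the off-diagonal block I would show $L^{(i)}(v_0^\perp\cap W_i)\subseteq W_i$: by Lemma~\ref{lem alphabeta} one has $\eta_i\in\linspan{v_{i-1},v_i}\subseteq W_i$, so $L_i=P_{v_i^\perp}\circ P_{v_{i-1},\eta_i^\perp}$ maps $v_{i-1}^\perp\cap W_i$ into $v_i^\perp\cap W_i$; feeding in the inductive decomposition of $u\in v_0^\perp\cap W_i$ (splitting off the new direction $P_{W_{i-1}^\perp}(v_i)\in W_{i-1}^\perp\subseteq v_0^\perp$, which $L^{(i-1)}$ fixes) shows $L^{(i-1)}u\in v_{i-1}^\perp\cap W_i$ and closes this step.

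For the expansion bound I would split into two cases according to whether step $i$ increases the front. If $v_i\in W_{i-1}$ then $m_i=m_{i-1}$ and, since $\minexp(L_i)\ge1$, applying $L_i$ to the vector $L^{(i-1)}u$ gives $\minexp(A_i)\ge\minexp(A_{i-1})\ge\beta_{m_i}$ directly. If $v_i\notin W_{i-1}$ then $m_i=m_{i-1}+1$ (such increases occur only while $m_{i-1}\le d-2$), and I would apply Lemma~\ref{expanding:comp} to $L=L^{(i-1)}$ and $L'=L_i$ with $k=m_{i-1}$: by $(I_{i-1})$ the block form gives $s_{m_{i-1}}(L^{(i-1)})\ge\beta_{m_{i-1}}$ and $\med^{\geq}_{\beta_{m_{i-1}}}((L^{(i-1)})^\ast)=v_{i-1}^\perp\cap W_{i-1}$, while $\med(L')=v_{i-1}^\perp\cap\linspan{v_{i-1},v_i}$, $\norm{L'}\ge\lambda$, $s_2(L')=1$. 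The one geometric input needed is the angle bound $\sin\angle(\med(L'),v_{i-1}^\perp\cap W_{i-1})\ge\varepsilon$, which I expect to be the crux: writing $v_i=p+q$ with $p=P_{W_{i-1}}(v_i)$ and $q=P_{W_{i-1}^\perp}(v_i)$, a short computation gives $P_{v_{i-1}^\perp}(v_i)=P_{v_{i-1}^\perp}(p)+q$ with $P_{v_{i-1}^\perp}(p)\in v_{i-1}^\perp\cap W_{i-1}$ and $q$ orthogonal to it, whence $\sin\angle(\med(L'),v_{i-1}^\perp\cap W_{i-1})=\norm{q}/\norm{P_{v_{i-1}^\perp}(v_i)}\ge\norm{q}=\sin\angle(v_i,W_{i-1})\ge\varepsilon$ by hypothesis (3). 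Lemma~\ref{expanding:comp} then yields $s_{m_i}(L^{(i)})\ge\beta_{m_i}$, and since the $\id$-block contributes only singular values equal to $1<\beta_{m_i}$, all $m_i$ singular values of $A_i$ must exceed $\beta_{m_i}$, i.e. $\minexp(A_i)\ge\beta_{m_i}$ (the base case $m_{i-1}=0$ being immediate from hypothesis (2)). Evaluating $(I_n)$ and using $\beta_{m_n}\ge\beta_d=\sigma$ finishes the proof. The main obstacle is precisely the bookkeeping that upgrades the singular-value estimate of Lemma~\ref{expanding:comp} into a minimum-expansion bound on the specific subspace $v_0^\perp\cap W_n$; this is what forces maintaining the block-diagonal invariant $(I_i)$ rather than merely tracking singular values.
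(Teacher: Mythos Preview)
Your proof is correct and follows essentially the same inductive scheme as the paper's: maintain that $L^{(i)}$ is block-diagonal with an identity block on $W_i^\perp$ and a $\beta_{m_i}$-expanding block $A_i$ on $v_0^\perp\cap W_i$, then feed the inductive hypothesis into Lemma~\ref{expanding:comp} at each ``new-direction'' step.

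Two small remarks. First, your verification of the key angle bound~\eqref{cond2check} is cleaner than the paper's: the paper introduces an auxiliary vector $v'=(\cos\alpha)v+(\sin\alpha)v_{i-1}\in W_{i-1}$ and compares $\langle v_i,v'\rangle$ with $\langle v_i^0,v\rangle$ via a convexity argument, whereas your orthogonal decomposition $P_{v_{i-1}^\perp}(v_i)=P_{v_{i-1}^\perp}(p)+q$ with $q\in W_{i-1}^\perp$ gives the bound in one line. Second, your appeals to Lemma~\ref{lem alphabeta} and Lemma~\ref{vel:front} are formally out of scope, since the statement is purely about abstract maps $L_i$ satisfying (1)--(2), with no normals $\eta_i$ in sight; but this is harmless, as everything you need (that $L_i$ preserves $W_i$ and that $L^{(i)}$ fixes $W_i^\perp$ pointwise) follows directly from hypotheses (1)--(2).
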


\begin{proof} For each $i=1,\ldots, n$ define,
 $L^{(i)}:= L_i\circ\ldots \circ L_0:v_0^\perp \to v_i^\perp$ and $W_i:=\linspan{v_0,\ldots, v_i}$.
Since  $W_i^\perp = v_0^\perp \cap \ldots \cap v_i^\perp \subset  \cap_{j=0}^i v_j^\perp$,
and every vector $u\in W_i^ \perp$
 is fixed by all $L_j$ with $0\leq j\leq  i$,
 we have  $L^{(i)}\,u=u$ for every $u\in W_i^\perp$.  
 
 We can delete from  $\{v_0,v_1,\ldots, v_n\}$
all vectors $v_i$ such that $\{v_{i-1},v_i\}$
is linearly dependent, which by item (1) correspond to  maps $L_i=\id$, and in this way assume that for all $i=1,\ldots, n$, the vectors $\{v_{i-1},v_i\}$
are linearly independent and $\norm{L_i}\geq \lambda$.

 Because
 $W_i^\perp$ is a singular subspace it follows that  $L^{(i)}(v_0^\perp\cap W_i)=v_i^\perp\cap W_i$.
We claim that  $L^{(i)}:v_0^\perp\cap W_i \to v_i^\perp\cap W_i$ is a $\sigma_i$-expansion, where
$k_i:= \dim(W_i)-1$ and
\begin{align*}
\sigma_i &:= 1/\sqrt{(\oplus_{{k_i}-1}(1-\varepsilon^2)\oplus (\oplus_{k_i} \lambda^{-2}) } .
\end{align*} 
The proof of this claim goes by induction in $i$, applying Lemma~\ref{expanding:comp}.

The claim  holds for $i=1$ with $k_1=1$ and $\sigma_1=\lambda$.

Assume now (induction hypothesis) that
$L^{(i-1)}$ is a $\sigma_{i-1}$-expansion on $v_0^\perp\cap W_{i-1}$.
We know by assumption (3) that
either $v_i\in W_{i-1}$ or else
$\sin \left( \angle (v_i, W_{i-1}) \right) \geq\varepsilon$. 

\medskip

\blob\;  Assume first that $\sin \left( \angle (v_i, W_{i-1}) \right) \geq\varepsilon$.

We have
$\med^{\geq}_{\sigma_{i-1}}((L^{(i-1)})^\ast)=v_{i-1}^\perp\cap W_{i-1}$
and
$\med(L_i)=v_{i-1}^\perp\cap \linspan{v_{i-1},v_i}$.
To apply Proposition~\ref{expanding:comp} we need to check that
\begin{equation}
\label{cond2check} 
\sin \angle\left(v_{i-1}^\perp\cap W_{i-1}, \, v_{i-1}^\perp\cap \linspan{v_{i-1},v_i} \right) \geq\varepsilon.
\end{equation}

Let $v_i^0$ denote the unit vector obtained normalizing the orthogonal projection
of $v_i$ onto $v_{i-1}^\perp$, so that
$$ v_i = (\cos\alpha)\, v_i^0 +(\sin \alpha)\, v_{i-1}\;,$$
with $\norm{v_i^0} =1$, $\langle v_i^0, v_{i-1}\rangle = 0$ 
and where $\alpha=\angle(v_i^0,v_i)$.
Note that $v_{i-1}^\perp\cap \linspan{v_{i-1},v_i}$ is the line spanned by $v_i^0$.
Take any unit vector $v\in v_{i-1}^\perp\cap W_{i-1}$
and let us prove that $\sin (\angle(v_i^0,v))\geq \varepsilon$.
This will establish~\eqref{cond2check}. 
Define
$$ v':= (\cos\alpha)\, v +(\sin \alpha)\, v_{i-1} $$
which is  a unit vector in $W_{i-1}$.
We can assume that $\langle v_i^0, v\rangle \geq 0$ for otherwise
the angle $\angle ( v_i^0,v)$ that we want to minimize would be obtuse.
Using the previous expressions for $v_i$ and $v'$ we have
$$ \langle v_i, v'\rangle =
(\cos^2\alpha) \langle v_i^0, v\rangle +\sin^2\alpha \;.$$
Since this expresses $\langle v_i, v'\rangle$ as a convex combination between $\langle v_i^0, v\rangle$ and the number $1$, it follows that
$$\cos (\angle(v_i, v')) = \langle v_i, v'\rangle \geq \langle v_i^0, v\rangle = \cos (\angle(v_i^0, v))  $$
which implies that
$$ \sin \left( \angle (v_i^0, v) \right) \geq 
\sin \left( \angle (v_i, v') \right) \geq \varepsilon\;.$$
 This proves~\eqref{cond2check} and shows the assumptions of 
Proposition~\ref{expanding:comp} are met. From this proposition,
we get that  on the linear subspace $v_{0}^\perp\cap W_i$,  of dimension $k_i=k_{i-1}+1$, the linear map 
$L^{(i)}=L_i\circ L^{(i-1)}$ is 
a $\hat{\sigma}_i$-expansion 
where  
 $$ \hat{\sigma}_i:= \left((1-\varepsilon^2)\oplus \sigma_{i-1}^{-2}\oplus \lambda^{-2}\right)^{-1/2} \geq \sigma_i .  $$

\blob\; The case $v_i\in W_{i-1}$ is somewhat simpler. We have $W_i=W_{i-1}$,
$k_i=k_{i-1}$, and  $\sigma_i=\sigma_{i-1}$. Hence, since $\minexp(L_i)\geq 1$ by induction hypothesis the map $L^{(i)}=L_i\circ L^{(i-1)}$ is a $\sigma_i$-expansion on
$v_0^\perp\cap W_i$.
\end{proof}

\bigskip

\subsection{Proof of Theorem~\ref{hyperb:main}}
In this subsection we relate collinearities with expansion of the velocity tangent flow, and then prove Theorem~\ref{hyperb:main}.

Recall that we are assuming that $P$ is $\varepsilon$-spanning.

\begin{proposition}\label{rel:expans}
There exists $\sigma>1$, depending only on $d$, $f$ and  $\varepsilon$, such that given a collinearity $[i,j_0]$ of some trajectory, for all $j>j_0$,
the velocity flow $L_{[i,j]}$ is a relative $\sigma$-expansion on  
$ v_i^\perp\cap V_{[i,j]}$
w.r.t.  $ v_i^\perp\cap V_{[i,j_0]}$.
\end{proposition}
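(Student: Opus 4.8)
The plan is to extract from the collinearity two uniform geometric estimates---a lower bound on the per-step expansion factor and a lower bound on the transversality of each newly created velocity direction to the preceding velocity front---and to feed them into (a relative form of) Lemma~\ref{main:lin:alg}.

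First, by item (1) of Proposition~\ref{colin:properties} the collinearity propagates, so $[i,l]$ is a collinearity and hence $V_{[i,l]}=N_{[i,l]}$ for every $l\geq j_0$. Fix $l$ with $j_0<l\leq j$ and write $v_l=\alpha_l\eta_l+\beta_l v_{l-1}$ as in Lemma~\ref{lem alphabeta}, where $\cos(\frac\pi2\lambda(f))<\alpha_l<2$ and $0\leq\beta_l<1$. If $\eta_l\in N_{[i,l-1]}$ then $v_l\in V_{[i,l-1]}$ and the step creates no new front direction, contributing no new expansion; the substantive case is $\eta_l\notin N_{[i,l-1]}$. Then $\dim N_{[i,l-1]}\leq d-1$, and completing a family of normals spanning $N_{[i,l-1]}$ together with $\eta_l$ to $d$ distinct face normals (possible since a $d$-polytope has at least $d+1$ faces), Definition~\ref{def spanning} gives $\angle(\eta_l,N_{[i,l-1]})\geq\varepsilon$.

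The crucial point is that $v_{l-1}\in V_{[i,l-1]}=N_{[i,l-1]}$, so the incidence angle obeys
$$\theta_l=\angle(\eta_l,\linspan{v_{l-1}})\geq\angle(\eta_l,N_{[i,l-1]})\geq\varepsilon,$$
which excludes head-on collisions at new-normal steps. Consequently, by Lemma~\ref{lem alphabeta} and Lemma~\ref{proj:sing:val}, $L_l$ expands the line $v_{l-1}^\perp\cap\linspan{v_{l-1},v_l}$ by the factor $\cos f(\theta_l)/\cos\theta_l\geq\lambda_0$, where $\lambda_0:=\inf_{\varepsilon\leq\theta<\pi/2}\cos f(\theta)/\cos\theta>1$ depends only on $f$ and $\varepsilon$. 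Projecting $v_l$ orthogonally onto $V_{[i,l-1]}$ annihilates the term $\beta_l v_{l-1}$, whence $\sin\angle(v_l,V_{[i,l-1]})=\alpha_l\sin\angle(\eta_l,N_{[i,l-1]})\geq\cos(\frac\pi2\lambda(f))\sin\varepsilon=:\varepsilon'>0$; that is, each new velocity is transversal to the current velocity front by a definite amount.

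With these estimates I would re-run the inductive argument of Lemma~\ref{main:lin:alg} in the quotient $v_i^\perp/(v_i^\perp\cap V_{[i,j_0]})$, modding out the collinearity front so that only the post-$j_0$ directions are tracked. Hypotheses (1)--(2) of Lemma~\ref{main:lin:alg} hold via Lemma~\ref{proj:sing:val} and Remark~\ref{proj:sing:val id}, the $\lambda_0$-rate being needed only at the new-normal (hence new-direction) steps, where $\theta_l\geq\varepsilon$ guarantees it, while at the remaining steps $\minexp(L_l)\geq 1$ suffices by Lemma~\ref{vel:front}; hypothesis (3) holds with angle $\varepsilon'$. Since the number of new directions is $\dim V_{[i,j]}-\dim V_{[i,j_0]}\leq d-1$, the resulting rate $\sigma=\sigma(d,\varepsilon',\lambda_0)=\sigma(d,f,\varepsilon)>1$ is uniform. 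I expect the main obstacle to be exactly this passage to the quotient: Lemma~\ref{main:lin:alg} is phrased as an absolute expansion on $v_0^\perp\cap V_{[0,n]}$, so one must prove its relative analogue---checking that each $L_l$ descends to the quotient and, by Lemma~\ref{vel:front}, fixes the orthogonal complements of the fronts, and that the quotient maps retain the singular-value structure demanded by Lemma~\ref{expanding:comp}---or instead split the expansion via Lemma~\ref{abstr:expans:lemma}. This bookkeeping, together with verifying that the $\lambda_0$-rate is invoked only where $\theta_l\geq\varepsilon$ holds, is the delicate part.
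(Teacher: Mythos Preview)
Your proposal shares the paper's key ingredients: collinearity propagation, the spanning bound $\angle(\eta_l,N_{[i,l-1]})\geq\varepsilon$ at new-normal steps, the resulting velocity-front transversality, and an appeal to Lemma~\ref{main:lin:alg}. Your explicit observation that $\theta_l\geq\varepsilon$ at new-normal steps (because $v_{l-1}\in V_{[i,l-1]}=N_{[i,l-1]}$) is a genuine clarification---it supplies the uniform per-step rate $\lambda_0>1$ that Lemma~\ref{main:lin:alg} needs and that the paper's proof leaves implicit.

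The substantive divergence is in how the relative expansion is obtained. You propose to re-run Lemma~\ref{main:lin:alg} in the quotient $v_i^\perp/(v_i^\perp\cap V_{[i,j_0]})$, and you are right that this is the delicate point: the induced maps on the quotients of $v_{l-1}^\perp$ and $v_l^\perp$ do not obviously retain the rank-one singular structure of Lemma~\ref{proj:sing:val}, and one must even specify what subspace of $v_l^\perp$ to mod out at each time (note $v_l\notin V_{[i,j_0]}$ in general, so the dimension of $v_l^\perp\cap V_{[i,j_0]}$ varies). The paper avoids this entirely by working absolutely: it applies Lemma~\ref{main:lin:alg} to $L_{[j_0,j]}$ on $v_{j_0}^\perp\cap V_{[j_0,j]}$, isolates the subspace $V_J$ spanned by the velocities at new-normal times, invokes Lemma~\ref{anglo cumulativo} to get $\angle_{\min}(V_{[i,j_0]},V_J)\geq\arcsin(\sin^d\varepsilon')$, and then transfers the absolute $\sigma$-expansion on $v_{j_0}^\perp\cap V_J$ to a $\tilde\sigma$-expansion on $(V_{[i,j_0]})^\perp\cap V_{[j_0,j]}$, which is exactly the relative statement. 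So the missing idea in your outline is the use of Lemma~\ref{anglo cumulativo} (an exterior-algebra angle accumulation estimate) to convert an absolute expansion on the ``new'' subspace $V_J$ into the desired relative expansion transverse to $V_{[i,j_0]}$; this replaces the quotient bookkeeping cleanly.
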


\begin{proof} Assume $\{(v_l,\eta_l)\}_l$ is
a trajectory with collinearity $[i,j_0]$.
Because $P$ is $\varepsilon$-spanning, for all $j>j_0$ such that
 $\eta_j\notin N_{[i,j-1]}$
we have $\angle(\eta_j, N_{[i,j-1]} )\geq \varepsilon$. 

Notice that 
$V_{[i,j-1]} = N_{[i,j-1]}$, for all $j>j_0$,
and by Lemma~\ref{lem alphabeta}, we have
$v_j=\alpha_j \eta_j + \beta_j v_{j-1}$ with $\alpha_j\geq \cos(\frac{\pi}{2}\lambda(f))>0$.
Hence  there is some
 $0<\varepsilon'<\varepsilon$ depending on $\varepsilon$ and on $\lambda(f)$, such that for all $j>j_0$ with  $\eta_j\notin N_{[i,j-1]}$,
$$ \angle(v_j, V_{[i,j-1]} )\geq \varepsilon'\;. $$

Consider the set of `new normal' times
$$ J:=\{ j_0 <l \leq j \colon \eta_l\notin N_{[i,l-1]} \} $$
and the corresponding velocity subspace
$$ V_J:= \linspan{ v_l \colon l\in J } , $$
so that $V_{[i,j]}= V_{[i,j_0]}\oplus V_J $.

By Lemma~\ref{main:lin:alg} there exists $\sigma>1$, depending only on $d$, $f$ and  $\varepsilon$  such that
$L_{[j_0,j]}$ is a 
$\sigma$-expansion on $v_{j_0}^\perp\cap V_{[j_0,j]}$. In particular,
$L_{[j_0,j]}$ is also a $\sigma$-expansion on
$v_{j_0}^\perp\cap V_J\subseteq v_{j_0}^\perp\cap V_{[j_0,j]}$.
By Lemma~\ref{anglo cumulativo} we have
$$\angle_{\min}(V_{[i,j_0]},V_J)\geq \arcsin( \sin^d(\varepsilon')) =: \tilde{\varepsilon}.$$
Hence there exists $1<\tilde{\sigma}<\sigma$
depending only on $\tilde{\varepsilon}$ and $\sigma$ such that
$L_{[j_0,j]}$ is a 
$\tilde{\sigma}$-expansion on $  (V_{[i,j_0]})^\perp \cap V_{[j_0,j]}$.
This implies that $L_{[i,j]}$ is a relative $\tilde{\sigma}$-expansion on  
$ v_i^\perp\cap V_{[i,j]}$
w.r.t.  $ v_i^\perp\cap V_{[i,j_0]}$.
\end{proof}

\begin{corollary}\label{delta:rel:expans}
Given the constant $\sigma>1$ in Proposition~ \ref{rel:expans}, and $1<\sigma'<\sigma$, there is $\delta>0$ such that for every
trajectory $\{(v_l,\eta_l)\}_{l}$, if $[i,j_0]$ is a $\delta$-collinearity  then for all $j>j_0$,
the velocity flow $L_{[i,j]}$ is a relative $\sigma'$-expansion on  
$ v_i^\perp\cap V_{[i,j]}$
w.r.t.  $ v_i^\perp\cap V_{[i,j_0]}$.

\end{corollary}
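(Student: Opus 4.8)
The plan is to rerun the proof of Proposition~\ref{rel:expans}, with the exact collinearity hypothesis replaced by a $\delta$-collinearity, and to record how the resulting relative expansion constant degrades as a function of $\delta$; the claim then follows by taking $\delta$ small enough that this constant exceeds $\sigma'$. The first step is to upgrade the hypothesis so that it propagates to all later times: if $[i,j_0]$ is a $\delta$-collinearity, then item (1) of Proposition~\ref{delta:colin:properties} shows that $[i,j-1]$ is a $\delta'$-collinearity for every $j>j_0$, with $\delta'=\arcsin(\sin\delta/\sin\varepsilon)$ a fixed quantity that does not deteriorate as $j$ grows. Thus $\angle(V_{[i,j-1]},N_{[i,j-1]})<\delta'$ for all such $j$, and this is the only role that exact collinearity played in the original proof.

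Next I would produce a perturbed version of the velocity-angle bound $\angle(v_j,V_{[i,j-1]})\ge\varepsilon'$ used in Proposition~\ref{rel:expans}. For a new-normal time ($\eta_j\notin N_{[i,j-1]}$) the $\varepsilon$-spanning hypothesis gives $\angle(\eta_j,N_{[i,j-1]})\ge\varepsilon$, exactly as in the original argument. Combining this with the elementary estimate $\sin\angle(\eta_j,V_{[i,j-1]})\ge\sin\angle(\eta_j,N_{[i,j-1]})-\sin\angle(V_{[i,j-1]},N_{[i,j-1]})$ for the angle between a vector and two equidimensional subspaces yields $\sin\angle(\eta_j,V_{[i,j-1]})\ge\sin\varepsilon-\sin\delta'$. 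Since $v_j=\alpha_j\eta_j+\beta_j v_{j-1}$ with $v_{j-1}\in V_{[i,j-1]}$ and $\alpha_j\ge\cos(\tfrac{\pi}{2}\lambda(f))>0$ by Lemma~\ref{lem alphabeta}, the same computation as in Proposition~\ref{rel:expans} converts this into a bound $\angle(v_j,V_{[i,j-1]})\ge\varepsilon''$ valid for every new-normal time $j>j_0$, where $\varepsilon''=\varepsilon''(\delta,\varepsilon,\lambda(f))$ satisfies $\varepsilon''\to\varepsilon'$ as $\delta\to0$.

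With $\varepsilon''$ replacing $\varepsilon'$, the rest of the proof of Proposition~\ref{rel:expans} is unchanged. Writing $J=\{\,j_0<l\le j:\eta_l\notin N_{[i,l-1]}\,\}$ and $V_J=\linspan{v_l:l\in J}$, Lemma~\ref{main:lin:alg} applied to the segment $[j_0,j]$ shows that $L_{[j_0,j]}$ is a uniform expansion on $v_{j_0}^\perp\cap V_J$ with a factor depending continuously on $\varepsilon''$, while Lemma~\ref{anglo cumulativo} gives $\angle_{\min}(V_{[i,j_0]},V_J)\ge\arcsin(\sin^d\varepsilon'')$. As in Proposition~\ref{rel:expans}, these two facts combine to make $L_{[i,j]}$ a relative $\tilde\sigma$-expansion on $v_i^\perp\cap V_{[i,j]}$ with respect to $v_i^\perp\cap V_{[i,j_0]}$, where $\tilde\sigma=\tilde\sigma(\varepsilon'')$ depends continuously on $\varepsilon''$ and reduces to $\sigma$ when $\delta=0$ (i.e.\ $\varepsilon''=\varepsilon'$). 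Consequently, given $1<\sigma'<\sigma$, one simply chooses $\delta>0$ small enough that $\tilde\sigma(\varepsilon'')>\sigma'$, and the corollary follows.

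The main obstacle is the perturbation step of the second paragraph: one must turn the $\delta'$-closeness of the subspaces $V_{[i,j-1]}$ and $N_{[i,j-1]}$ into a lower bound on the velocity angle $\angle(v_j,V_{[i,j-1]})$ that is simultaneously valid at every new-normal time, uniform in $j$, and convergent to $\varepsilon'$ as $\delta\to0$. The uniformity in $j$ is what makes a single choice of $\delta$ possible; it is built into the argument because the bound $\varepsilon''$ and the constants furnished by Lemmas~\ref{main:lin:alg} and~\ref{anglo cumulativo} involve only $\oplus$-sums of at most $d$ terms and never reference $j$. Once this step is in place, the remaining passage to the relative expansion constant is a routine continuity argument in the single parameter $\delta$.
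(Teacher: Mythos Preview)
Your proof is correct, but it takes a different route from the paper's. The paper gives a one-line argument: ``This follows from Proposition~\ref{rel:expans} with a continuity argument like the one used in the proof of Proposition~\ref{delta:colin:charact}.'' That is, the paper packages the passage from exact collinearities to $\delta$-collinearities into an abstract compactness argument on the trajectory space $\mathscr{T}$ via Lemma~\ref{abstr:cont}, without ever reopening the proof of Proposition~\ref{rel:expans}.

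By contrast, you explicitly rerun the proof of Proposition~\ref{rel:expans} and track quantitatively how each constant degrades: you propagate the $\delta$-collinearity forward using Proposition~\ref{delta:colin:properties}(1), convert the subspace closeness into a perturbed velocity-angle bound $\varepsilon''$ via a triangle-type inequality, and then observe that the outputs of Lemmas~\ref{main:lin:alg} and~\ref{anglo cumulativo} depend continuously on this input and involve only $\oplus$-sums of at most $d$ terms, giving uniformity in $j$. Your approach is more constructive---one can in principle read off an explicit $\delta(\sigma')$---while the paper's approach is shorter and avoids redoing the estimates, at the cost of being non-effective. A minor point: the ``elementary estimate'' you invoke for $\sin\angle(\eta_j,V_{[i,j-1]})$ is more naturally stated as the angle triangle inequality $\angle(\eta_j,V_{[i,j-1]})\ge\angle(\eta_j,N_{[i,j-1]})-\angle(V_{[i,j-1]},N_{[i,j-1]})$, but this does not affect the argument.
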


\begin{proof} This follows from Proposition~\ref{rel:expans}
with a  continuity argument like the one used in
the proof of Proposition~\ref{delta:colin:charact}.
\end{proof}

\bigskip

\begin{proposition}\label{expans} Given $\delta>0$ there exists $\sigma>1$, depending on  $d$, $f$, $\varepsilon$ and $\delta$, such that if a time interval $[i+1,j]$ of some trajectory contains no subinterval which is
a $\delta$-collinearity then $L_{[i,j]}$ is a $\sigma$-expansion on  
$ v_i^\perp\cap V_{[i,j]}$.
\end{proposition}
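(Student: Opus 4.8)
The plan is to read off $L_{[i,j]}$ as a composition of the elementary velocity maps $L_l := L_{v_{l-1},\eta_l,v_l}\colon v_{l-1}^\perp\to v_l^\perp$ for $l=i+1,\dots,j$ and to feed this composition into Lemma~\ref{main:lin:alg}. Relabelling $w_m:=v_{i+m}$ ($0\le m\le n:=j-i$) and $\tilde L_m:=L_{i+m}$, the data $\{w_0,\dots,w_n\}$, $\{\tilde L_1,\dots,\tilde L_n\}$ are exactly of the form required by that lemma, with $\linspan{w_0,\dots,w_{m-1}}=V_{[i,i+m-1]}$, with $\tilde L_n\circ\cdots\circ\tilde L_1=L_{[i,j]}$, and with $w_0^\perp\cap\linspan{w_0,\dots,w_n}=v_i^\perp\cap V_{[i,j]}$ — precisely the subspace on which we must prove expansion. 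So it suffices to verify the three hypotheses of Lemma~\ref{main:lin:alg} with constants depending only on $d$, $f$, $\varepsilon$ and $\delta$, and to read off the resulting $\sigma$.

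Hypothesis (1) and the bound $\minexp(L_l)\ge 1$ are structural: by Lemma~\ref{vel:front} each $L_l$ fixes $V_{[l-1,l]}^\perp=v_{l-1}^\perp\cap v_l^\perp$ and has $\minexp\ge 1$, while Lemma~\ref{proj:sing:val}(c) together with Proposition~\ref{prop derivative in Jacobi coordinates} identify the one nontrivial singular direction and its singular value $\cos f(\theta_l)/\cos\theta_l$, where $\theta_l=\arccos|\langle v_{l-1},\eta_l\rangle|$. For hypothesis (3) I would fix the $\delta'>0$ that Corollary~\ref{delta:dichotomy} attaches to $\delta$ and invoke its dichotomy on the $\delta$-collinearity-free segment $[i+1,j]$: for each $l$, either $\eta_l\in\{\eta_{i+1},\dots,\eta_{l-1}\}$, in which case $\eta_l\in N_{[i+1,l-1]}\subseteq V_{[i,l-1]}$ (Lemma~\ref{vel:front}) and hence $v_l=\alpha_l\eta_l+\beta_l v_{l-1}\in V_{[i,l-1]}$ by Lemma~\ref{lem alphabeta}; or else $\angle(v_l,V_{[i,l-1]})\ge\delta'$. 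This is precisely the dichotomy of hypothesis (3) with $\varepsilon=\sin\delta'$.

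The delicate point — and what I expect to be the main obstacle — is hypothesis (2), which asks for a \emph{uniform} expansion factor $\lambda>1$ on the nontrivial direction of every $L_l$ with $v_{l-1},v_l$ independent. This cannot hold for \emph{all} such maps, since $\cos f(\theta_l)/\cos\theta_l\to 1$ as $\theta_l\to 0$, and near-normal incidences at already-recorded normals are not excluded. The resolution is that the factor is genuinely needed only for the maps that introduce a new velocity direction: inspecting the proof of Lemma~\ref{main:lin:alg}, the bound $\lambda$ is used only in its ``$v_m\notin\linspan{w_0,\dots,w_{m-1}}$'' branch, the other maps being absorbed using $\minexp\ge 1$ alone. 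For a new-direction map we have $\angle(v_l,V_{[i,l-1]})\ge\delta'$ and $v_{l-1}\in V_{[i,l-1]}$, forcing $\angle(v_l,v_{l-1})\in[\delta',\pi-\delta']$; combining $\|v_l-v_{l-1}\|=2\sin(\angle(v_l,v_{l-1})/2)\le 2\cos(\delta'/2)$ with Lemma~\ref{lem:vkdiff} gives $f(\theta_l)+\theta_l\ge\delta'$, whence $\theta_l\ge\delta'/2$ since $f(\theta_l)\le\lambda(f)\,\theta_l\le\theta_l$. On $[\delta'/2,\pi/2)$ the factor satisfies $\cos f(\theta_l)/\cos\theta_l\ge\cos(\lambda(f)\theta_l)/\cos\theta_l$, and the latter is nondecreasing in $\theta_l$ and exceeds $1$; taking $\lambda:=\cos(\lambda(f)\delta'/2)/\cos(\delta'/2)>1$ supplies the uniform factor on exactly the maps that need it.

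With hypotheses (1)–(3) in force for $\varepsilon=\sin\delta'$ and this $\lambda$, Lemma~\ref{main:lin:alg} yields $\|L_{[i,j]}u\|\ge\sigma\|u\|$ for all $u\in v_i^\perp\cap V_{[i,j]}$, with $\sigma=1/\sqrt{\bigl(\oplus_{d-1}\cos^2\delta'\bigr)\oplus\bigl(\oplus_d\lambda^{-2}\bigr)}>1$. Since $\delta'$ is a function of $\delta$ (and of the standing $\varepsilon$-spanning data of $P$) and $\lambda$ a function of $\delta'$ and $\lambda(f)$, the constant $\sigma$ depends only on $d$, $f$, $\varepsilon$ and $\delta$, which is the assertion of the proposition.
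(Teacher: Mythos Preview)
Your proof is correct and follows the paper's approach exactly: invoke Corollary~\ref{delta:dichotomy} to obtain the angle dichotomy with threshold $\delta'$, then apply Lemma~\ref{main:lin:alg}. You are in fact more careful than the paper, whose three-sentence proof does not address the uniform $\lambda$ in hypothesis~(2) of that lemma; your observation that $\lambda$ is invoked only in the new-direction branch of the lemma's proof, together with the bound $\theta_l\ge\delta'/2$ obtained via Lemma~\ref{lem:vkdiff}, correctly fills this gap.
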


\begin{proof}
Let $[i,j]$ be a time interval such that $[i+1,j]$ contains no subinterval which is itself a $\delta$-collinearity.
By Corollary~\ref{delta:dichotomy}, there is $\delta'>0$ such that  for
every $l \in [i+1,j]$  either $\eta_l\in \{\eta_{i+1},\ldots,\eta_{l-1}\}$, or else 
$$\angle( v_l, V_{[i,l-1]})\geq \delta'\;.$$
Thus by Lemma~\ref{main:lin:alg}  $L_{[i,j]}$ is a uniform expansion on  
$ v_i^\perp\cap V_{[i,j]}$.
\end{proof}

\bigskip

Now we can prove Theorem~\ref{hyperb:main}.

\begin{proof}[Proof of Theorem~\ref{hyperb:main}]
Take the constant $\sigma>1$  given in Proposition~\ref{rel:expans}. Set $\sigma'=\frac{1}{2}+\frac{1}{2}\sigma\in (1,\sigma)$,
and pick $\delta=\delta(\sigma')>0$ as provided by Corollary~\ref{delta:rel:expans}. 
Fix the constant $\sigma''=\sigma(\delta)>1$ given by Proposition~\ref{expans} and set $\sigma_0=\min\{\sigma',\sigma''\}$.

Fix some integer $k\geq 0$ and let $\{(v_j,\eta_j)\}_{j\in\Nn_0}$ be a trajectory.
We consider three cases:

\smallskip

\noindent
\blob \; If $[0,k]$ contains no $\delta$-collinearity, by Proposition~\ref{expans} $L_{[0,k]}$ is a
$\sigma''$-expansion on $ v_0^\perp\cap V_{[0,k]}$.
But since any trajectory is generating on $[0,k]$,
we have $ v_0^\perp = v_0^\perp\cap V_{[0,k]}$, which proves that $L_{[0,k]}$ is a
$\sigma''$-expansion. Finally, because $L_{[k,2k]}$
is non contracting, $L_{[0,2k]}= L_{[k,2k]}\circ L_{[0,k]}$ is also a $\sigma''$-expansion. 

\noindent
\blob \; If $[0,k]$ contains a $\delta$-collinearity $[i,j]\subseteq [0,k]$,  we can assume it is minimal, in the sense that $[i,j]$ contains no proper subinterval which is itself a $\delta$-collinearity.
Consider first the case $j\geq i+1$.
By Proposition~\ref{expans}, $L_{[i,j]}$ is a
$\sigma''$-expansion on $ v_i^\perp\cap V_{[i,j]}$. Because
$L_{[i,2k]}=  L_{[j,2k]}\circ L_{[i,j]}$, and
$L_{[j,2k]}$ is non contracting, the map 
$L_{[i,2k]}$ is also a $\sigma''$-expansion on $ v_i^\perp\cap V_{[i,j]}$.
Remark that since $i\leq k$, 
the trajectory is generating on $[i,2k]$,
and hence  $ v_i^\perp = v_i^\perp\cap V_{[i,2k]}$.
Hence by Proposition~\ref{delta:rel:expans},
$L_{[i,2k]}$ is a relative $\sigma'$-expansion on
$ v_{i}^\perp$ w.r.t. $ v_i^\perp\cap V_{[i,j]}$.
Thus by Lemma~\ref{abstr:expans:lemma},
$L_{[i,2 k]}$ is a $\sigma_0$-expansion,
which implies so is $L_{[0,2k]}$.

\noindent
\blob \; Finally we consider the case  $[0,k]$ contains  $\delta$-collinearities, but the minimal ones have length zero, say $\{i\}\subset [0,k]$ is a $\delta$-collinearity.
In this case we have $\angle(v_i,\eta_i)<\delta$, and the proof is somehow simpler. By Lemma~\ref{angle:lemma}
\begin{align*}
\angle( V_{[i,j-1]}, N_{[i,j-1]} ) & =
\angle( \linspan{v_i}+N_{[i+1,j-1]}, \linspan{\eta_i}+N_{[i+1,j-1]} ) \\
&\leq \arcsin\left(\frac{\sin\delta}{\sin\varepsilon} \right)=:\hat\delta\;.
\end{align*}
On the other hand, because 
$v_j= \alpha_j\,\eta_j+\beta_j\,v_{j-1}$ with $\alpha_j\geq c$ and $c=\cos( \frac{\pi}{2}\lambda(f))$,  whenever $\eta_j\notin \{\eta_i,\ldots, \eta_{j-1}\}$ we have
\begin{align*}
\angle(v_j, V_{[i,j-1]} ) &\geq
\frac{c}{2}\angle(\eta_j,V_{[i,j-1]} ) \\
 &\geq
\frac{c}{2}\,\angle(\eta_j, N_{[i,j-1]}) -\frac{c\, \hat\delta}{2}  \\
 &\geq
\frac{c}{2}\,(\varepsilon -\hat\delta) \geq \frac{c\,\varepsilon}{4}\;,
\end{align*}
provided $\delta$ is small enough.
Thus, using Lemma~\ref{proj:sing:val} we get by induction that $L_{[i,i+k]}$ is a uniform expansion, and as before that $L_{[0,2k]}$ is also a uniform expansion.

\smallskip

Therefore,  $L_{[0,2k]}$
is a $\sigma_0$-expansion in all cases.
\end{proof}

\section{Proof of the Main Statements}
\label{sec:proof}
Mohammad

Let $P$ be a spanning polytope and $f$ a contracting reflection law.
Denote by $\Phi=\Phi_{f,P}\colon D\to D$ the billiard map for $P$ and  $f$.

\begin{proof}[Proof of Theorem~\ref{main theorem}]
Let $x=(p,v)\in D$ be any $k$-generating point. We can identify the tangent space $T_x M$ with $v^\perp\times v^\perp$ using the Jacobi coordinates $(J,J')$. From the proof of Proposition~\ref{prop:partialhyperbolic}, the subbundle $E^{cu}(x)$ in the coordinates $(J,J')$ is $\{(J,J')\in v^\perp\times v^\perp\colon J'=0\}$. Moreover, by Theorem~\ref{hyperb:main}, there exists $\sigma>1$ depending only on $P$ and $f$ such that
$$
\|D\Phi^{2k}(x)(J,0)\|= \| L_{[0,2k]}(J)\|\geq \sigma\|J\|,\quad\forall\,J\in v^\perp.
$$
This uniform minimum growth expansion on $E^{cu}$ proves the theorem.
\end{proof}

\begin{proof}[Proof of Theorem~\ref{coro NUH} ]
Assume that $(\Phi,\mu)$ is ergodic and $\int T\,d\mu<+\infty$. First note that, by Proposition~\ref{prop:partialhyperbolic},
$$
\limsup_{n\to\infty}\frac{1}{n}\log\|D\Phi^n(x)\vert_{E^{s}}\|=\log\lambda(f)<0
$$
for every $x\in D$.
Consider now the partition $\{A_n=T^{-1}\{n\}\}_{n\in\Nn}$ of $D$,
and define the measurable function $\tilde T:D\to\Nn$,
$\tilde T=n$ on $A_n':= \Phi(A_n)$. This function satisfies
$$ T\left( \Phi^{ -\tilde T(x) }(x) \right)= \tilde T(x)\quad \text{ for all } \; x\in D\,.  $$
Moreover $\int \tilde T\,d\mu 
= \int  T\,d\mu <+\infty$.
From Theorem~\ref{main theorem} we have
$$ \norm{ D\Phi^{-2\,\tilde T(x)}(x) \vert_{E^{cu}} }\leq 1/\sigma \quad \text{ for all } \; x\in D\,. $$
Define recursively the following sequence of backward iterates and stopping times
$$ \left\{ \begin{array}{ll}
x_0 &= x \\
t_0 &= 2 \,\tilde T(x_0)
\end{array}\right. \qquad
\left\{ \begin{array}{ll}
x_{j+1} &= \Phi^{-t_j}(x_j) \\
t_{j+1} &= 2 \,\tilde T(x_{j+1})
\end{array}\right. \;. $$
Let us write $\tau_n=\sum_{j=0}^{n-1} t_j $.
Since $t_j\geq 2\,d$ for all $j$, this sequence tends to $+\infty$, and we have
\begin{align*}
- \frac{1}{\tau_n}\, \log \norm{ D\Phi^{-\tau_n}(x)\vert_{E^{cu}} } &\geq
- \frac{1}{\sum_{j=0}^{n-1} t_j}	\, \sum_{j=0}^{n-1}
\log \norm{ D\Phi^{-t_j}(x_j)\vert_{E^{cu}} } \\
&\geq
- \frac{n}{\sum_{j=0}^{n-1} t_j}	\, 
\log \sigma^{-1} =
\frac{\log \sigma}{\frac{1}{n}\,\sum_{j=0}^{n-1} \tilde T(x_{j-1})}\;.
\end{align*}
Thus, by Birkhoff's ergodic theorem, for $\mu$-almost every $x\in D$,
$$ \limsup_{n\to+\infty}
-\frac{1}{n}	\, \log \norm{ D\Phi^{-n}(x)\vert_{E^{cu}} }\geq 
 \frac{\log \sigma}{ \int \tilde T\,d\mu } >0\;.$$
By Kingman's ergodic theorem,  the above $\limsup$ is actually a limit. Thus,
$$
\lim_{n\to\infty}\frac{1}{n}\log \norm{ D\Phi^{-n}(x)\vert_{E^{cu}} }<0
$$
for $\mu$-almost every $x\in D$.
This proves that $\mu$ is a  hyperbolic measure.
\end{proof}

\begin{proof}[Proof of Theorem~\ref{coro UH} ]
Assume that $\Lambda\subset D$ is $\Phi$-invariant. 
By Proposition~\ref{prop:partialhyperbolic}, $\Phi$ is uniformly partially hyperbolic on $\Lambda$. Moreover, it follows from Theorem~\ref{main theorem} that there exists a constant $C>0$ depending only on $P$ and $f$ such that
$$
\| D\Phi^{-n}(x)|_{E^{cu}(x)}\|\leq C \left(\frac1\sigma\right)^{\frac{n}{2k}}
$$
for every $x\in \Lambda$ that is $k$-generating. Since the escaping time function $T$ is bounded on $\Lambda$, every $x\in \Lambda$ is $\tau$-generating where $\tau:=\sup_{x\in\Lambda}T(x)$. So the expansion rate can be made uniform and equal to $\sigma^{1/\tau}>1$. This shows that $\Phi$ is uniformly hyperbolic on $\Lambda$.
\end{proof}

\begin{proof}[Proof of Corollary~\ref{coro 1}]
Suppose $P$ is in general position, in particular $P$ is a spanning polytope.
By Corollary~\ref{cor escaping 1} there exists a positive 
constant $\lambda_0=\lambda_0(P)$ such that the escaping time function  $T$ is bounded on $D$.
The claim follows by Theorem~\ref{coro UH}. 
\end{proof}

\begin{proof}[Proof of Corollary~\ref{coro 2}]
Suppose $P$ is an obtuse polytope in general position, in particular $P$ is a spanning polytope. By Corollary~\ref{cor escaping 2}
the escaping time function  $T$ is bounded on $D$.
The claim follows by Theorem~\ref{coro UH}.
\end{proof}

\section{Examples}
\label{sec:examples}

In this section we study in detail the contracting billiard on a family of $3$-dimensional simplexes, illustrating the applicability of our main theorems.

Let $\{e_1,\ldots, e_{d+1}\}$ be the canonical basis of $\Rr^{d+1}$.
Given $d\geq 2$, we denote by $\Delta^{d}_h$ the  
$d$-simplex in $\Rr^{d+1}$ defined as the convex hull of the vertexes 
$v_j=e_j$ for  $1\leq j\leq d$  and $v_{d+1}= \frac{1-h}{d}\sum_{j=1}^d e_j + h\, e_{d+1}$.
For any set of $d$ facets of $\Delta^{d}_h$ ($(d-1)$-dimensional faces), their normals are linearly independent. Therefore, $\Delta^{d}_h$ is in general position according to Definition~\ref{def generic polytopes}  and it is  spanning according to Definition~\ref{def spanning}.

\subsection{Near conservative billiards}
We firstly consider contracting reflection laws close to the specular one. It will be shown that the escaping time is uniformly bounded, by computing explicitly the barycentric angle of  $\Delta^{d}_h$.

The simplex $\Delta^{d}_h$ has $d+1$ barycentric angles,
one for each vertex. By symmetry all  barycentric angles at base vertexes $v_j$, with $1\leq j\leq d$, are the same.
Denote the barycentric  angle at the base vertexes by 
 $\phi_1=\phi_1(h)$ and the barycentric  angle at
$v_{d+1}$  by $\phi_2=\phi_2(h)$.
Define 
\begin{equation}
\label{lambda0}
\lambda_0(h):= 1-4\,\min\{ \phi_1(h),\phi_2(h)\} /\pi . 
\end{equation}

\begin{proposition}\label{example1}
For every $h>0$ and every contracting reflection law $f$ satisfying $\lambda_0(h) < \lambda(f)<1$ the billiard map $\Phi_{f,\Delta^{d}_h}$ is uniformly hyperbolic.
\end{proposition}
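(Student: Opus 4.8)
The plan is to derive uniform hyperbolicity directly from Theorem~\ref{coro UH}. Since $\Delta^d_h$ is in general position it is spanning (as noted just above), so the polytope hypotheses of Theorem~\ref{coro UH} are already met, and the whole argument reduces to establishing a uniform bound $\sup_{x\in D}T(x)<\infty$ for the escaping time under the assumption $\lambda_0(h)<\lambda(f)<1$.

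To bound $T$ I would localise the dynamics at the vertices, in the spirit of Sinai. Fix an orbit and a maximal stretch of consecutive reflections whose face normals do \emph{not} span $\Rr^d$; since any $d$ facet normals of $\Delta^d_h$ are linearly independent, such a stretch hits at most $d-1$ distinct facets and hence omits at least two of the $d+1$ facets. Picking one omitted facet $F_{i_0}$ and letting $v$ be the opposite vertex, every facet hit during the stretch lies in the set $I_v$ of the $d$ facets incident to $v$, so the whole stretch lies inside the tangent cone $Q_v=\{x:\langle x-v,\eta_i\rangle\ge 0,\ i\in I_v\}$ and consists of genuine reflections of the cone billiard on $Q_v$. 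Consequently, if each $Q_v$ has finite escaping time, say at most $K_v$ reflections, then every non-spanning stretch has at most $K:=\max_v K_v$ reflections, so every orbit segment of length $>K$ spans $\Rr^d$ and $T\le K+1$ on $D$. By the symmetry of $\Delta^d_h$ there are only two cones up to isometry, those at the base vertices $v_1,\dots,v_d$ and the one at the apex $v_{d+1}$, with barycentric angles $\phi_1(h)$ and $\phi_2(h)$.

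It then remains to show that both vertex cones have finite escaping time. By Theorem~\ref{th:zigzag} the cone $Q_v$ has finite escaping time whenever $2\phi_v>\pi/2-f(\pi/2)$, so both cones are controlled as soon as $\pi/2-f(\pi/2)<2\min\{\phi_1(h),\phi_2(h)\}$. The main calculational step is to evaluate the two barycentric angles explicitly: writing down the inward unit normals of the $d$ facets meeting at a base vertex and at the apex and computing, as in Definition~\ref{def barycentric angle}, the distance $\ell=\sin\phi$ from the origin to the affine hull of those normals yields $\phi_1(h)$ and $\phi_2(h)$ as explicit functions of $h$. The cone condition is then exactly $f(\pi/2)>\tfrac{\pi}{2}\lambda_0(h)$ with $\lambda_0(h)=1-\tfrac{4}{\pi}\min\{\phi_1(h),\phi_2(h)\}$ as in~\eqref{lambda0}; since $f(0)=0$ and $|f'|\le\lambda(f)$ give $f(\pi/2)\le\tfrac{\pi}{2}\lambda(f)$, the threshold $\lambda_0(h)$ is precisely the sharp one for the linear laws $f(\theta)=\lambda(f)\,\theta$, for which $\lambda(f)>\lambda_0(h)$ is exactly the cone condition, and this is what motivates the form of~\eqref{lambda0}.

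The step I expect to be most delicate is not the hyperbolicity machinery, which is entirely supplied by Theorem~\ref{coro UH} and Theorem~\ref{th:zigzag}, but the two quantitative inputs: the explicit computation of $\phi_1(h)$ and $\phi_2(h)$ from the coordinates of $\Delta^d_h$, and the careful matching of the resulting cone condition $\pi/2-f(\pi/2)<2\min\{\phi_1,\phi_2\}$ with the stated hypothesis through the relation between $f(\pi/2)$ and $\lambda(f)$, which is where the precise value $\lambda_0(h)$ of~\eqref{lambda0} is pinned down.
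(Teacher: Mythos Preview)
Your approach is essentially the paper's: verify that $\Delta^d_h$ is spanning, use Theorem~\ref{th:zigzag} on the vertex cones to bound the escaping time, and conclude via Theorem~\ref{coro UH}. The paper's proof is much terser---it simply asserts that bounded escaping time at each vertex cone suffices---whereas you supply the localisation argument (a non-spanning stretch lives in some $Q_v$) that the paper leaves implicit; you also flag the passage from the cone condition $2\phi_i>\pi/2-f(\pi/2)$ to the hypothesis $\lambda(f)>\lambda_0(h)$, which the paper writes as the chain $2\phi_i>\pi/2-f(\pi/2)>\pi/2-\lambda(f)\pi/2$ without further comment.
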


\begin{proof}
Notice that $\Delta^{d}_h$ is in general position and spanning.
Moreover, by Theorem \ref{th:zigzag}, if $2\phi_i>\pi/2-f(\pi/2)>\pi/2-\lambda(f)\pi/2$ for $i=1,2$  then the polyhedral cones have bounded escaping time. This is the case when  $\lambda(f)  >1-4\min\{\phi_1,\phi_2\}/\pi$. Thus, by Theorem~\ref{coro UH},
$\Phi_{f,\Delta^{d}_h}$ is uniformly hyperbolic.
\end{proof}

Figure \ref{fig:1} shows the graphs of the $\lambda_0(h)$ defined in~\eqref{lambda0} for $d=3$, $4$ and $5$.
The shaded  regions bounded between these graphs and $\lambda=1$ 
are called admissible regions.
This figure shows that the admissible regions decrease as the dimension increases. The bottom tips of these admissible regions correspond to the heights $h$ of the regular $d$-simplexes.

\begin{figure}[h]
\includegraphics[scale=.8]{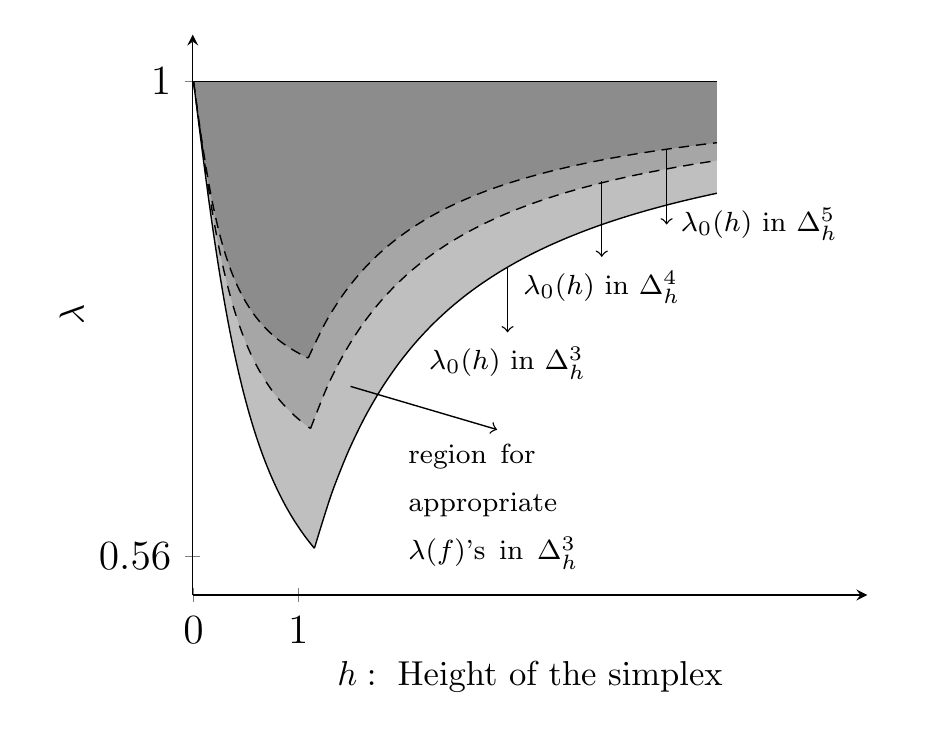}
\caption{Parameter regions with uniform bounded escaping time}
\label{fig:1}
\end{figure}

\subsection{Near slap billiards} 

Here we consider the situation when $\lambda(f)\approx 0$ for a given contracting reflection law $f$.
These reflection laws are called strongly contracting (see~\cite{MDDGP13}). In this context the dynamics may loose uniformity due to
unbounded escaping times.
To any strongly contracting billiard
we can associate a degenerate billiard map called the `slap map'
corresponding to $f=0$, where reflections are always orthogonal to the faces. When $h$ is  small enough the slap map has a trapping region,  called a  \emph{chamber}, away from acute wedges.
Hence the escaping time is bounded on the chamber. 
This concept generalizes the notion of chamber introduced in~\cite{MDDG14}.

For simplicity we will assume $d=3$.

\begin{proposition}
For any $h\in(0,1/2)$ there 
exists   $\lambda_0=\lambda_0(h)>0$ such that
for  every contracting reflection law $f$ satisfying $\lambda(f)<\lambda_0(h)$, 
the  billiard map $\Phi_{f,\Delta^{3}_h}$ is uniformly hyperbolic.
\end{proposition}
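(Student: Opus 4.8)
The plan is to bring the statement under Theorem~\ref{coro UH}. The simplex $\Delta^3_h$ is in general position and spanning, so it suffices to exhibit a $\Phi_{f,\Delta^3_h}$-invariant set containing the attractor $D$ on which the escaping time $T$ is uniformly bounded. The idea is that when $\lambda(f)$ is small the reflected velocity is nearly parallel to the inward normal of the face just hit, so $\Phi_{f,\Delta^3_h}$ is a small perturbation of the degenerate \emph{slap map} $S:=\Phi_{0,\Delta^3_h}$, for which (by Proposition~\ref{prop contractive law} with $f\equiv 0$) the reflected velocity equals the inward normal exactly. I would therefore analyze $S$ first and then transfer the conclusion by a perturbation argument.

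For $S$ the velocity after a collision with $F_i$ is $\eta_i$, so an orbit is determined by its face itinerary: from a point of $F_i$ one follows the ray in direction $\eta_i$ to locate the next face. Computing the four inward normals of $\Delta^3_h$ in the given coordinates, I would read off the admissible face transitions. The only obstruction to a bounded escaping time is an orbit eventually confined to two faces $F_i,F_j$, whose normals span merely a plane, whereas any segment meeting three faces already spans $\Rr^3$ by general position. Two-face confinement can persist only inside an \emph{acute wedge}, that is, along an edge $F_i\cap F_j$ of acute dihedral angle, where the ray from $F_j$ in direction $\eta_j$ returns to $F_i$ and vice versa, and the orbit collapses onto the cross-section perpendicular to the edge. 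I would then define a compact \emph{chamber} $\mathcal C\subset M$ by keeping the base point a definite distance from the edges bounding the acute wedges and imposing an open condition on the velocity that excludes directions leading into those wedges. The two properties to prove are (i) forward invariance, $S(\mathcal C)\subseteq\mathcal C$, and (ii) the existence of $K=K(h)$ such that every $S$-orbit starting in $\mathcal C$ meets three linearly independent normals within $K$ steps, so that $T\le K$ on $\mathcal C$. Property (i) is precisely where the hypothesis $h<1/2$ enters: an explicit computation of the dihedral angles of $\Delta^3_h$ must show that the slap image of $\mathcal C$ is never pushed into an acute wedge, and the value $1/2$ should emerge as the threshold of this computation.

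For the perturbation I would use that, by Proposition~\ref{prop contractive law}, $C_\eta(R_\eta(v))\to\eta$ uniformly in $v,\eta$ as $\lambda(f)\to 0$, since $f(\theta)\le\frac{\pi}{2}\lambda(f)$ for every $\theta\in[0,\pi/2)$; hence $\Phi_{f,\Delta^3_h}\to S$ uniformly on compact subsets of $M$ avoiding the skeleton. Because $\mathcal C$ is bounded away from the skeleton and cut out by open conditions with a margin, for all $\lambda(f)<\lambda_0(h)$ small enough a slightly enlarged chamber $\mathcal C'$ remains forward invariant for $\Phi_{f,\Delta^3_h}$, and over the bounded horizon of $K$ steps the perturbed orbit visits the same faces as the corresponding slap orbit, so $T\le K$ on $\mathcal C'$. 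Finally I would argue that $D\subseteq\mathcal C'$: a point of $D$ carries a complete bi-infinite orbit staying in the open faces, whereas an orbit not eventually contained in $\mathcal C'$ is asymptotic to an acute wedge and collapses onto an edge, so that its backward orbit is forced out to the relative boundary of a face and meets the skeleton in finite time, which is incompatible with $\Phi^{-n}(x)\in M^+$ for all $n$. Thus $T$ is bounded on $D$ and Theorem~\ref{coro UH} yields uniform hyperbolicity of $\Phi_{f,\Delta^3_h}$.

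I expect the genuine difficulty to be the two-sided analysis of the slap map on $\Delta^3_h$: constructing $\mathcal C$, verifying its forward invariance (i) and the uniform escaping bound (ii), and pinning down the threshold $h=1/2$ through the dihedral-angle computation, together with the verification that $D$ is captured by the chamber. A secondary technical point is that $T$ is only integer-valued and upper semicontinuous, so its bound is most cleanly transported from $S$ to $\Phi_{f,\Delta^3_h}$ inside the compact trajectory space $\mathscr{T}$, reusing the continuity and compactness devices of Section~\ref{sec:unifexp}.
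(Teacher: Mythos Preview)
Your strategy coincides with the paper's: study the slap map $\Phi_0$, exhibit a forward-invariant chamber on which $T$ is bounded, perturb to small $\lambda(f)$, and invoke Theorem~\ref{coro UH}. The paper makes the chamber explicit. Since for small $h$ slap orbits alternate between the base triangle $A_1A_2A_3$ and the side faces, one works with the second iterate $\Phi_0^2$ on the base; the three $\Phi_0^2$-images $C_1,C_2,C_3$ of the centroid $C_0$ land back on the base exactly when $h$ is small, and a concrete hexagon $\Hscr$ built from these points admits a neighbourhood $\Vscr$ with $\Phi_0^2(\overline{\Vscr})\subset\Vscr$. The trapping set for the perturbed map is then $\Lambda_{\lambda_0}=(\Vscr\times\Ss^+_{\eta,\lambda_0})\cap M^+$. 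Note that insisting on $S(\mathcal C)\subseteq\mathcal C$ rather than on invariance under the second iterate forces your chamber to live on several faces at once and makes (i) needlessly harder.

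There is, however, a genuine error in your argument that $D\subseteq\mathcal C'$. You assert that a forward orbit not eventually in $\mathcal C'$ ``collapses onto an edge'' inside an acute wedge, and that its backward orbit therefore meets the skeleton in finite time. The dynamics goes the other way: in the two-dimensional cross-section of a dihedral wedge of angle $\alpha$, each slap bounce multiplies the distance to the edge by $1/\cos\alpha>1$, so forward slap orbits \emph{escape} acute wedges (this is the ``zig-zag'' mechanism), while backward orbits drift toward the edge without ever reaching it in finite time. Hence neither half of your implication holds, and the skeleton argument cannot close. The correct route is forward absorption, as in the paper: every $x\in M^+$ eventually enters the forward-invariant chamber, and since $D=\bigcap_{n\geq 0}\Phi^n(M^+)$ with $\Phi^{-n}(x)\in M^+$ for all $n$ and all $x\in D$, one obtains $D\subset\Lambda_{\lambda_0}$ directly.
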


\begin{proof} 
Firstly, let us assume that $\lambda(f)=0$. This means that the billiard particle always reflects orthogonally to each face of the polytope. Since after the first iterate the angle is zero, we can reduce $\Phi_{f,\Delta_h^3}$ to a multi-valued map $\Phi_0:\Delta^3_h\to \Delta_h^3$ (skeleton points may have more than one image). Let $A_i$, $i=1,\ldots,4$ denote the vertexes of the simplex $\Delta_h^3$ with $A_4$ being the top vertex. The triangle $A_1A_2A_3$ is called the base of the simplex (see Figure \ref{trapping}). We show that there is a set $\Vscr$  on the base of the simplex which is invariant by $\Phi_0^2$. Let $C_0$ denote the center of $A_1 A_2 A_3$, i.e., the point mapped by $\Phi_0$ to the top vertex of the simplex. Then, the base triangle is partitioned into three triangles, namely $A_1A_2C_0$, $A_1A_3C_0$ and $A_2A_3C_0$. Since $\Phi_0(C_0)$ is the intersection of the three faces, it has three distinct images by $\Phi_0$. A simple calculation shows that when $h <h_0$ for some $h_0>0$, these images belong to the base of the simplex. Denote them by $C_1$, $C_2$ and $C_3$. The image of triangles $A_1A_2C_0, A_2A_3C_0$ and $A_3A_1C_0$ under $\Phi^2_0$ are respectively triangles $A_1A_2C_3, A_2A_3C_1$ and $A_3A_1C_2$. Therefore $\Phi^2_0$ maps the triangle $A_1A_2A_3$ to itself.

Now we construct an hexagon $\Hscr=M_1 M_2 M_3 M_4 M_5 M_6$ as follows (see Figure~\ref{trapping}): the point $M_1$ is the intersection of $A_1 C_2$ with the perpendicular to $A_1 C_0$ through $C_1$. Likewise, $M_2$ is the intersection of $A_2 C_1$ with the perpendicular to $A_2 C_0$ through $C_2$. The other $M_j$'s are similarly defined. The hexagon $\Hscr$ is the union of three pentagons whose images under $\Phi^2_0$ are in the hexagon $\Hscr$. On the right of Figure~\ref{trapping}, we can see the image of the pentagon $\Pscr=C_0 C_2 M_2 M_1 C_1$, $\Phi^2_0(\Pscr)=C_0'C_2'M_2'M_1' C_1'$. 

\begin{figure}[h]
\includegraphics[scale=.5]{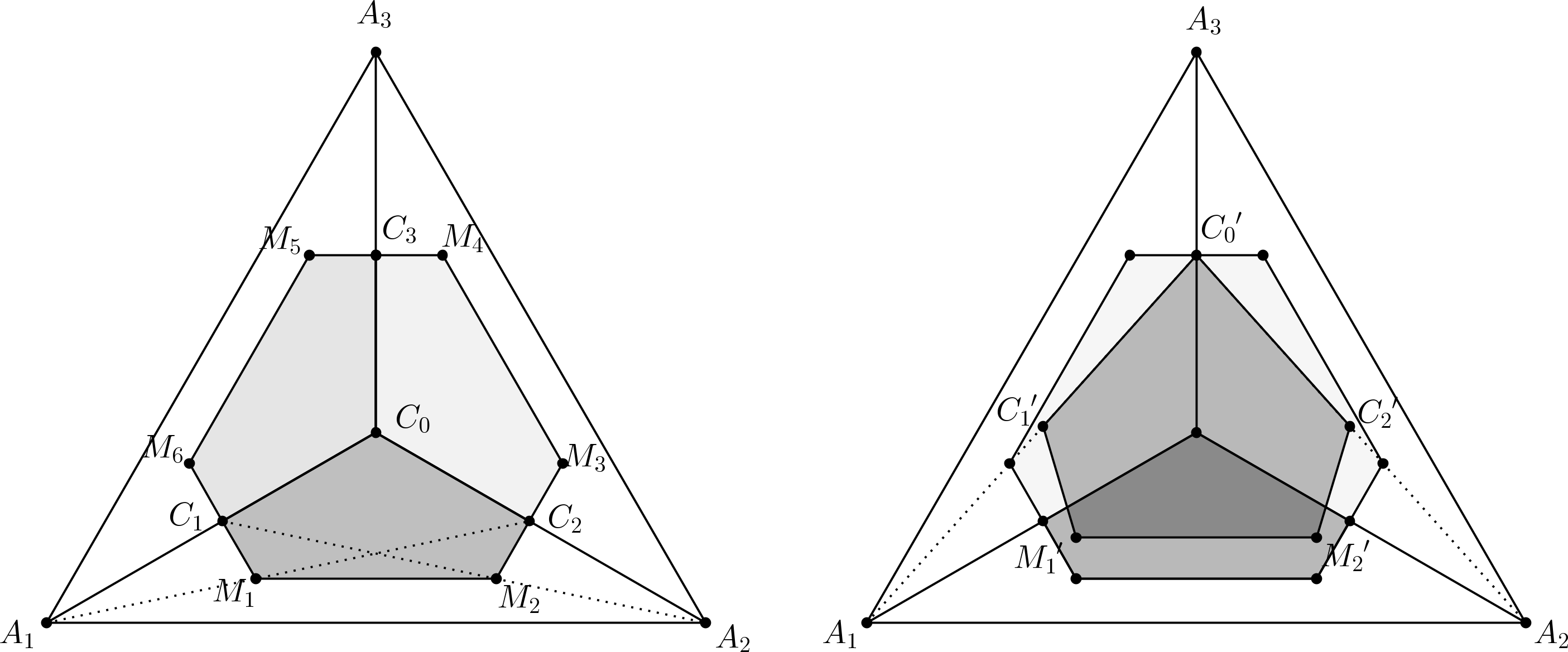}
\caption{}
\label{trapping}
\end{figure}

Moreover, the intersection of the pentagon  $\Phi^2_0(\Pscr)$ with the boundary of $\Hscr$ is just the point $C_0'=C_3$. Hence, for some small enough neighborhood $\Vscr$ of $\Hscr$ on the base triangle $A_1 A_2 A_3$ we have $\Phi^2_0(\overline{\Vscr})\subset  \Vscr$.

It is easy to see that every orbit of $\Phi_0$ eventually enters $\Vscr$. In fact, every orbit starting near the wedges of the simplex will escape by a zig-zag movement and enter $\Hscr\subset \Vscr$.  Since $\Vscr$ is away from the wedges, the escaping time $T(x)$ for $x\in\Vscr$ is uniformly bounded.

Denote by $\eta$ the inward normal to the base of the simplex and given $\lambda_0>0$ define
$$
\mathbb{S}^+_{\eta,\lambda_0}=\left\{v\in \mathbb{S}^+_{\eta}\colon \langle v,\eta\rangle>\cos\left(\lambda_0\frac\pi2\right)\right\}.
$$ 
Also define $\Lambda_{\lambda_0}:=\left(\Vscr\times\mathbb{S}^+_{\eta,\lambda_0}\right)\cap M^+$. Then, by continuity, there exists $\lambda_0=\lambda_0(h)>0$ such that for every contracting reflection law satisfying $\lambda(f)<\lambda_0$ we have
$$
\Phi_{f,\Delta_h^3}^2(\Lambda_{\lambda_0})\subset \Lambda_{\lambda_0}\quad\text{and}\quad D=\bigcap_{n\geq0}\Phi_{f,\Delta_h^3}^n(\Lambda_{\lambda_0}).
$$  
The previous equality follows from the fact that for each $x\in M^+$ there exists $n\geq0$ such that $\Phi_{f,\Delta_h^3}^n(x)\in \Lambda_{\lambda_0}$. Since the escaping time is bounded on $\Lambda_{\lambda_0}$, it is also bounded on $D$.
Therefore, the proposition follows from Theorem~\ref{coro UH}.

\end{proof}

\section*{Acknowledgements}

The authors were partially supported by Funda\c c\~ao para a Ci\^encia e a Tecnologia (FCT/MEC). 
PD was supported  under the project: UID/MAT/04561/2013.
JPG was supported through the FCT/MEC grant SFRH/BPD/78230/2011 and the project UID/Multi/00491/2013 financed by FCT/MEC through national funds and when applicable co-financed by FEDER, under the Partnership Agreement 2020. MS was supported by PNPD/CAPES. The authors wish to express their gratitude to Gianluigi Del Magno for stimulating conversations and also to the anonymous referee that helped us to significantly improve the presentation of the paper.


\bibliographystyle{plain}

\end{document}